\newcommand{\replacemathtextdashes}[1]{%
  \ifstrequal{#1}{-}{--}{\replacemathtextdashesaux#1\relax}%
}
\def\replacemathtextdashesaux#1#2\relax{%
  \ifstrequal{#1}{-}{--}{{#1}}%
  \ifx\relax#2\relax
    {}%
  \else
    \replacemathtextdashesaux#2\relax
  \fi
}
\renewcommand{\epsilon}{\varepsilon}
\newcommand{\p}{\ensuremath{\partial}}
\newcommand{\mc}{\ensuremath{\mathcal}}
\definecolor{labelkey}{rgb}{0,0,1}
\def\les{\lesssim}
\def\eps{\varepsilon}
\newcommand{\mb}{\mathbb}
\renewcommand*{\tilde}{\widetilde}
\renewcommand*{\hat}{\widehat}
\renewcommand*{\bar}{\overline}
\newtheorem{theorem}{Theorem}[section]
\newtheorem{lemma}[theorem]{Lemma}
\newtheorem{proposition}[theorem]{Proposition}
\newtheorem{corollary}[theorem]{Corollary}
\theoremstyle{definition}
\newtheorem{definition}[theorem]{Definition}
\newtheorem{remark}[theorem]{Remark}
\numberwithin{equation}{section}
\def\p{\partial}
\def\f1r{{\frac{1}{r}}  }
\def\p{\partial}
\def\f1r{{\frac{1}{r}}  }
\title{Low regularity analysis of the Zakharov--Kuznetsov equation on $\mathbb R \times \mathbb T$}
 \author{G. Cao-Labora\thanks{Courant Institute of Mathematics, New York University, NY 10012, USA. Email: gc2703@nyu.edu}}
\date{\vspace{-5ex}}
\begin{document}


\maketitle
 \begin{abstract}
We consider the Cauchy problem for the Zakharov--Kuznetsov equation in the cylinder. We improve the local wellposedness to spaces of regularity $s > 1/2$. The result is optimal in terms of the corresponding bilinear estimate or Picard iteration. Our method is based on an improvement of the understanding of the resonant set, identifying and exploiting its particular geometric properties. We also consider the problem under randomization of the initial data, in which case we obtain solutions for generic data in $H^{s}$ for some $s < 0$. To do so, we consider a novel approach based on lower regularity modifications of the classical $X^{s, b}$ spaces that allow to control concentration of mass in small sets of frequencies. 
 \end{abstract}

\section{Introduction}

\subsection{Historical background}

The Zakharov--Kuznetsov equation is the two-dimensional equation given by
\begin{equation} \label{eq:ZK}
	\begin{cases}
		u_t + \p_{x_1} \Delta u &= \frac12 \p_{x_1} (u^2), \\ 
		u(x, 0) &= u_0 \in H^s .
	\end{cases}
\end{equation}
The dispersion relation and the linear propagator are given as follows
\begin{equation} \label{eq:dispersive_rel}
	\phi(\xi_1, \xi_2) = \xi_1 (\xi_1^2 + \xi_2^2), \qquad \mbox{ and } \qquad \widehat{S(t)u_0} (\xi_1, \xi_2) = e^{i t \phi (\xi_1, \xi_2)} \hat u_0 (\xi_1, \xi_2).
\end{equation}

The equation was introduced by Zakharov and Kuznetsov in \cite{ZK} as a model for propagation of ion-sound waves. It was independently derived by Laedke
and Spatschek as a two-dimensional model from the equations of motion for hydrodynamics \cite{laedke}, which was further justified by Lannes - Linares - Saut \cite{lannes}. We also note that in one dimension this takes the form of the well-known KdV equation, thus the Zakharov--Kuznetsov can also be seen as an extension of the KdV equation in two dimensions.

Equation \eqref{eq:ZK} has the following conserved quantities:
\begin{equation} \label{eq:cons}
	M(u) = \int u^2 dx , \qquad \mbox{ and }\qquad E(u) = \frac12 \int |\nabla u|^2 + \frac13 \int u^3.
\end{equation}
Moreover, assuming $u$ has sufficient regularity and decay in $x_1$, one can also write the following conservation law:
\begin{equation*}
	\p_t \int_{\mathbb R} u(x)  dx_1 = - \int_{\mathbb R} \p_{x_1} \Delta u dx_1 + \frac12 \int_{\mathbb R} \p_{x_1} (u^2) dx_1 = 0
\end{equation*}
for any fixed $x_2$. In Fourier space, $\hat u (0, \xi_2, t )$ is formally invariant under the flow for any $\xi_2$. That is, the Fourier modes of $u$, whose first component vanish, remain constant. One can also observe that the equation admits the following rescaling $u_\lambda (x, t) = \lambda^2 u(\lambda x, \lambda^3 t)$. In particular, the critical regularity is $s_c = -1$, given that $\| u_\lambda \|_{\dot{H}^{-1}} = \| u \|_{\dot{H}^{-1}}$. 

There is extensive mathematical work in understanding the Zakharov--Kuznetsov equation. We will focus our discussion on the works on the original ZK equation (two dimensions and quadratic nonlinearity). We refer to the works \cite{RibaudGZK, LinaresPastorGZK, GrunrockGZK, FarahGZK, BiagioniGZK, MolinetGZK, Kato} for a study of the generalized ZK equation, which considers higher order nonlinearities. We also refer to the works \cite{Ribaud3d, LinaresSaut3D, Grunrock3D, Correia, KinoshitaDD, Linares3D, Kato} for a discussion of the higher dimensional ZK equation.

The mathematical analysis of the Zakharov--Kuznetsov equation was initiated by the work of Faminskii \cite{faminskii1995}, where local-wellposedness is established in $H^1(\mathbb R^2)$, and global-wellposedness follows from conservation of energy. The local-wellposedness proof is based on the approach developed by Kenig, Ponce and Vega in the context of the KdV equation \cite{Kenig-Ponce-Vega}. This was later improved to $H^s (\mathbb R^2)$ with $s > 3/4$ by Linares and Pastor \cite[Theorem 1.6]{linares2009}. The range $s > 1/2$ was achieved independently in the works of Gr\"unrock and Herr \cite{grunrock2014}, and Molinet and Pilod \cite{molinet2015}. Recently, Kinoshita \cite{kinoshita2021} established local wellposedness for $s > -1/4$ and demonstrated that the Picard iteration fails for $s < -1/4$. In particular, global wellposedness in $L^2$ follows directly from Kinoshita result given the conservation of mass.

In the case of the periodic setting $\mathbb T^2$, Linares, Panthee, Robert and Tzvetkov \cite{linares2019} obtained local wellposedness in $H^s(\mathbb T^2)$ for $s > 5/3$. This was later improved by Schippa \cite{schippa2020} to $s > 3/2$ and by Kinoshita-Schippa \cite{kinoshita2021a} to $s > 1$.

Our work is devoted to the case of $\mathbb R \times \mathbb T$. The study of \eqref{eq:ZK} in that setting was initiated by Linares, Pastor, Saut \cite{linares2010}, where they showed local wellposedness in $H^s (\mathbb R \times \mathbb T)$ for $s \geq \frac32$. This was later improved to $s \geq 1$ by Molinet and Pilod \cite{molinet2015}. Recently, Osawa \cite{osawa2022} was able to cover $s > 9/10$. Also, in a joint work with Takaoka \cite{osawa2024}, they showed global wellposedness for $s > 29/31$. 

The first part of this paper gives a new proof of the local wellposedness that allows to reach $s > 3/4$ unconditionally, and also $s > 1/2$ under the natural low-frequency condition $\hat u_0(\xi, n) = O(\xi)$ for $|\xi| \ll 1$. The main difficulty is dealing with the resonant set of frequencies characterized by $\Delta = \phi (\nu, k) + \phi (\zeta, m) + \phi (\xi, n)$ where $\nu + \zeta + \xi = 0$ and $k + m + n = 0$. While the direct analysis of the set where $\Delta$ is small is unfeasible, we are able to carry out an analysis of the set where $\Delta$ and all its directional derivatives are small. As we will see, that resonant contribution corresponds to frequencies located around the triangle of vertices $(0, p)$, $(-p/2, -p/2)$ and $(p/2, -p/2)$, for some $p \in \{ k, m, n \}$. The low frequency condition $| \hat u_0 ( \xi, n) | = O(\xi)$ arises naturally when looking at that resonant contribution. 

We also show that both results ($s > 1/2$ with low-frequency condition or $s > 3/4$ without it) are optimal in terms of the Picard iteration, except for the possible endpoint case. The optimality contrasts with the criticality of the problem, given that one usually expects local wellposedness for $s > s_c = -1$. This gap between the Picard method and criticality was already observed in $\mathbb R^2$ (given that Kinoshita \cite{kinoshita2021} showed $s > -1/4$ is optimal for Picard), however, the gap is even larger for the problem in $\mathbb R \times \mathbb T$, due to the absence of dispersion in the second variable.

The second part of the paper consists of studying \eqref{eq:ZK} under random initial data. We will show that for generic initial data, we have local existence of solutions in $H^{s}$, for any $s > \frac{-1}{26}$. In particular, we remark that the result is below $L^2$, and therefore it is the first step towards the construction of a Gibbs measure. The local wellposedness proof is done by randomizing the initial data with some independent and identically distributed Gaussians, and subtracting its linear evolution to the equation. Then, we prove that the remainder is smoother so that it can be treated at the deterministic local wellposedness regularity level.

This type of argument goes back to the pioneering work of Bourgain \cite{bourgain1993, bourgain_2Dinvariant}, where he showed probabilistic local wellposedness for periodic cubic defocusing NLS under Wick reordering for $d = 1, 2$, below the critical threshold in the $d=2$ case. There is a vast literature in PDE where the local wellposedness regularity can be improved by assuming randomness or generic initial data  such as NLS \cite{CLS, NS, DNY, DNY2, oh2020, nahmod2012}, or wave equations \cite{burq2008, burq2008b, LM, bringmann2024}. 

Our approach however differs from the generic approach for probabilistic wellposedness in several points. First of all, usually randomness is used to obtain results below the criticality threshold, while in our case the probabilistic wellposedness is still subcritical, due to the gap between criticality and the deterministic wellposedness theory. We follow the technique initiated by Bourgain \cite{bourgain1993} by subtracting the linear evolution of the rough initial data and showing a higher regularity for the remainder. However, the main estimate, which arises from the interaction of the rough linear evolution and the smoother remainder, is done completely differently. Instead of using Bourgain's approach, which is based on the $TT^\ast$ trick, we take advantage of the better control on $\| \hat u \|_{L^\infty}$ due to randomness. 

A fundamental part of our work is to combine the $L^2$-based $X^{s, b}$ norm, which is usual in dispersive equations, with a \textit{lower regularity} term which is $L^\infty$-based in frequency. Such type of modifications of $X^{s, b}$ have been used in the dispersive literature before (see \cite{grunrock2004, grunrock2008, nahmod2012}), and they basically correspond to measuring the $X^{s, b}$ norm in Fourier space in $L^p$ for some other $p \neq 2$, being $p=2$ the standard $X^{s, b}$ norm. However, to the best of our knowledge, this is the first case where these norms appear as \textit{lower regularity auxiliary term} combined with a standard $X^{s, b}$. The auxiliary part of the norm is irrelevant for most of the analysis, due to its lower regularity. However, it helps in very resonant regions, in order to avoid scenarios where $|\hat u (\xi, n)|$ concentrates in a set of measure $O(1/N^2)$ (being $N\approx | (\xi, n) |$). In those scenarios, the $L^\infty$ control gives a control that is $N$ times better than the $L^2$ control, and this will be enough to compensate for the fact that the $L^\infty$ control happens at a lower regularity level. We think that this type of idea may be widely applicable to other dispersive problems when one needs to rule out very resonant contributions, both in the deterministic or probabilistic settings.

\subsection{Deterministic result}

As usual, for functions $u(x, t)$ we define the space $X^{s, b}$ (cf. \cite{bourgain1993}) by the norm below. We also define the $Y^{s, b}$ space identically, but with a singular weight $\frac{\langle \xi \rangle}{| \xi | }$ that enforces vanishing of $\hat u (\xi, n_2, \tau) $ around $\xi \approx 0$:
\begin{align*}
	\| u \|_{X^{s, b}}^2 &= \int_\mathbb{R}  \sum_{n_2 \in \mathbb Z} \int_\mathbb{R} | \hat u(\xi, n_2, \tau) |^2 \langle \tau - \phi (\xi, n_2) \rangle^{2b} \langle (\xi, n_2) \rangle^{2s} d\xi d\tau,\\
    	\| u \|_{Y^{s, b}}^2 &= \int_\mathbb{R}  \sum_{n_2 \in \mathbb Z} \int_\mathbb{R} |\hat u(\xi, n_2, \tau)|^2 \langle \tau - \phi(\xi, n) \rangle^{2b} \langle (\xi, n_2) \rangle^{2s} \frac{ \langle \xi \rangle }{ |\xi | } d\xi d\tau,
\end{align*}
where $\hat u$ is the Fourier Transform in both space and time and $\langle y \rangle := \sqrt{ 1 + |y|^2 }$ for any $y \in \mathbb R^d$. 

The intuition behind the $Y^{s, b}$ norm is the following. Recall that $\hat u (0, n_2, t)$ is formally conserved along the flow. Therefore, when $u$ is periodic in the first variable, one can remove its average on the $x_1$ direction and enforce $\hat u(0, n_2, t) = 0$ always. In that case, the $Y^{s, b}$ norm and the $X^{s, b}$ are equivalent. In the case of nonperiodic data along $x_1$, the spaces $Y^{s, b}$ should be understood as a technique for replacing the average removal procedure for non-periodic data. We also note that the $X^{s, b}$ and $Y^{s, b}$ norms are equivalent for data supported on $| \xi | \geq c$, for any small $c$.

We define $\tilde H^s (\mathbb R \times \mathbb T )$ to be the corresponding space of initial data, with the norm
\begin{equation} \label{eq:tildeH}
	\| f \|_{\tilde H^s}^2 = \sum_{n_2 \in \mathbb Z} \int_\mathbb{R} | \hat f(\xi, n_2) |^2 \langle (\xi, n_2) \rangle^{2s} \frac{ \langle \xi \rangle }{ |\xi | } d\xi.
\end{equation}
As we see in the following remark, restricting ourselves to $\tilde H^s (\mathbb R \times \mathbb T ) $ is not a significant restriction. $H^s$ data with the corresponding cancellation and with reasonable decay will always be in $\tilde H^s$.

\begin{remark} Let $s \geq 0$. The space $\tilde H^s (\mathbb R \times \mathbb T )$ contains all functions $f\in H^s (\mathbb R \times \mathbb T)$ with the decay condition $x_1  f(x_1, x_2) \in L^2 (\mathbb R \times \mathbb T)$  together with the cancellation $\int f(x_1, x_2) dx_1 = 0$ for all $x_2 \in \mathbb T$. 

This is proved from the following argument. Let $| \xi | \leq 1$, and consider some function $g(x)$ with Fourier Transform $\hat g(\xi)$ such that $\hat g(0) = 0$. Then
\begin{equation*}
\int_{-1}^1 \frac{ | \hat g (\xi) |^2 }{ | \xi | } d\xi \leq 2\| \hat g (\xi) \|_{C^{1/2}_\xi}^2 \les \| \hat g (\xi) \|_{H^1_\xi}^2 = \| \langle x \rangle g(x) \|_{L^2_x}^2 
\end{equation*}

	As an interesting particular case, we notice that $\tilde H^s(\mathbb R \times \mathbb T)$ contains all functions $f \in H^s(\mathbb R \times \mathbb T)$ that are odd on $x_1$ and satisfy the decay condition $x_1  f(x_1, x_2) \in L^2$. 
\end{remark}

As usual, one can also define the short-time $X^{s, b}_T$ norm of a function $u$ defined over $t\in [0, T]$ by taking the infimum of the global norm over all possible extensions of $u$ outside $[0, T]$. We do the same for $Y^{s, b}$ spaces:
\begin{equation} \label{eq:localized_spaces}
	\| u \|_{X^{s, b}_T} = \inf_{ \substack{ u' = u \\ \forall t\in [0, T]} } \|  u'  \|_{X^{s, b}}, \qquad \mbox{ and } \qquad
	\| u \|_{Y^{s, b}_T} = \inf_{ \substack{ u' = u \\ \forall t\in [0, T]} } \|  u'  \|_{Y^{s, b}}.
\end{equation}

\begin{remark} \label{rem:embedding} Let us recall the well-known fact that $X^{s, b}_T$ are continuously embedded in $C([0, T], H^s)$ for $b > 1/2$. The same is true for $Y^{s, b}_T \xhookrightarrow{} C([0, T], \tilde H^s)$. The proof is standard in the case of $X^{s, b}$ spaces and can be found in textbooks such as \cite{tao_dispersive}, \cite{bourgain_ias}. The proof in $Y^{s, b}$ spaces is completely analogous to the $X^{s, b}$ case.
\end{remark}

We are now ready to state our theorems regarding the wellposedness of \eqref{eq:ZK} in the deterministic setting.

\begin{theorem} \label{th:deterministic_Xs_space} Let $s > 3/4$ and fix $u_0 \in H^s (\mathbb R \times \mathbb T )$. There exist $T$ and $b > 1/2$ such that the Zakharov--Kuznetsov equation \eqref{eq:ZK} is locally wellposed in $X^{s, b}_T$. That is, for any $u_0 \in H^s (\mathbb R \times \mathbb T)$ there exists a unique local solution $u \in X^{s, b}_{T}$ and the map $u_0 \to u$ is continuous from $H^s$ to $X^{s, b}_T$. Moreover, Remark \ref{rem:embedding} yields $u \in C([0, T]; H^s (\mathbb R \times \mathbb T))$.
\end{theorem}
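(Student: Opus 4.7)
The plan is to set up a standard Picard iteration for the Duhamel formulation
\begin{equation*}
u(t) = S(t) u_0 + \frac12 \int_0^t S(t-t') \partial_{x_1}\bigl(u(t')^2\bigr) \, dt'
\end{equation*}
as a fixed point problem in $X^{s,b}_T$ for some $b > 1/2$. The linear estimates, the high-modulation embedding $X^{s,b}_T \hookrightarrow C([0,T]; H^s)$, and the standard $T^{\alpha}$ smallness from localization are all routine, so the entire theorem reduces to proving a bilinear estimate of the form
\begin{equation*}
\| \partial_{x_1}(uv) \|_{X^{s, b-1}} \lesssim \| u \|_{X^{s, b}} \| v \|_{X^{s, b}}
\end{equation*}
(with $b$ slightly above $1/2$ and $b-1$ slightly below $-1/2$, so that a small power of $T$ can be extracted at the localization step).

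By duality and Plancherel, this bilinear bound is equivalent to a trilinear weighted sum estimate over frequencies $(\xi_i, n_i)$, $i=1,2,3$, with the convolution constraints $\xi_1 + \xi_2 + \xi_3 = 0$, $n_1 + n_2 + n_3 = 0$. Denoting modulations $\sigma_i = \tau_i - \phi(\xi_i, n_i)$, the key algebraic identity is $\sigma_1 + \sigma_2 + \sigma_3 = \Delta := \phi(\xi_1, n_1) + \phi(\xi_2, n_2) + \phi(\xi_3, n_3)$, so that $\max_i \langle \sigma_i \rangle \gtrsim \langle \Delta \rangle$. I would then perform a Littlewood--Paley decomposition in frequency (dyadic blocks $N_i \sim \langle (\xi_i, n_i) \rangle$) and in modulation (dyadic blocks $L_i \sim \langle \sigma_i \rangle$), arguing case by case on the relative sizes using Cauchy--Schwarz and the $L^2$-multiplier bound. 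The non-resonant regime, where one $L_i$ dominates some suitable function of the $N_i$, gains many powers from the weight $\langle \sigma_i \rangle^{\pm b}$ and is comfortable.

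The hard part is the resonant regime $|\Delta| \ll $ (the natural scale dictated by the $N_i$). Rather than bounding the measure of $\{|\Delta| \text{ small}\}$ directly --- which loses too much and only gives $s > 1$ --- I would split this region further using directional derivatives of $\Delta$ with respect to $(\xi_1, \xi_2)$: in the subregion where at least one such partial derivative is large, a change of variables (essentially, integrating out in the non-stationary direction) recovers the missing $\xi$-factor and closes the estimate. The remaining \emph{fully resonant} set, where $\Delta$ and all its directional derivatives are simultaneously small, is then shown by direct algebraic analysis of the polynomial $\Delta(\xi_1, \xi_2; n_1, n_2)$ to consist of a neighborhood of the triangle with vertices $(0, p)$, $(-p/2, -p/2)$, $(p/2, -p/2)$ for $p \in \{n_1, n_2, n_3\}$, with total measure that can be bounded using elementary two-dimensional geometry. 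Summing the dyadic pieces with this extra gain yields the bilinear estimate exactly for $s > 3/4$.

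The main obstacle is precisely this fully resonant triangular contribution: one needs the sharp geometric description, and the threshold $s > 3/4$ is driven by how many frequency pairs on a thin neighborhood of the triangle can conspire (note that the lattice in $n$ provides no dispersive averaging along the triangle's edges parallel to the $\xi$-axis). Once the bilinear estimate is in hand, the contraction mapping argument gives existence, uniqueness, and Lipschitz continuity of the data-to-solution map in $X^{s,b}_T$, and Remark \ref{rem:embedding} gives the $C([0,T]; H^s)$ conclusion.
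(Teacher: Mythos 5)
Your proposal is correct and follows essentially the same route as the paper: reduction to the bilinear estimate, duality plus dyadic decomposition in frequency and modulation, splitting the resonant regime according to the directional derivatives of $\Delta$ (the quantities $\theta_i - \theta_j$ in the paper), and a direct analysis of the "fully resonant" triangle configuration near $(0,p)$, $(-p/2,-p/2)$, $(p/2,-p/2)$ that drives the $s>3/4$ threshold. The paper implements the "large derivative" step via refined bilinear Strichartz estimates and a dichotomy between interactions where $|\Delta|\gtrsim N^2 M$ and those where some $|\theta_i-\theta_j|\gtrsim M$, which is the concrete realization of the change-of-variables idea you sketch.
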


\begin{theorem} \label{th:deterministic_Ys_space} Let $s > 1/2$ and fix $u_0 \in \tilde H^s (\mathbb R \times \mathbb T )$. There exist $T$ and $b > 1/2$ such that the Zakharov--Kuznetsov equation \eqref{eq:ZK} is locally wellposed in $Y^{s, b}_T$. That is, for any $u_0 \in \tilde H^s (\mathbb R \times \mathbb T)$ there exists a unique local solution $u \in Y^{s, b}_{T}$, and the map $u_0 \to u$ is continuous from $\tilde H^s$ to $Y^{s, b}_T$. Moreover, Remark \ref{rem:embedding} yields $u \in C([0, T]; \tilde H^s (\mathbb R \times \mathbb T))$. 
\end{theorem}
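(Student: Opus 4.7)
The plan is to reduce Theorem \ref{th:deterministic_Ys_space} to the bilinear estimate
\begin{equation*}
	\| \p_{x_1}(uv) \|_{Y^{s, b-1+\delta}} \les \| u \|_{Y^{s, b}} \| v \|_{Y^{s, b}}
\end{equation*}
for some $s > 1/2$, some $b > 1/2$ and some small $\delta > 0$. The linear estimate $\| S(t) u_0 \|_{Y^{s, b}_T} \les \| u_0 \|_{\tilde H^s}$ and the Duhamel estimate are classical in Bourgain-type spaces and carry over verbatim from $X^{s, b}$ to $Y^{s, b}$: the weight $\langle \xi \rangle / |\xi|$ is independent of $\tau$ and $t$, so the usual time-truncation and modulation arguments go through unchanged. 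Granted the bilinear estimate, a standard contraction argument in a small ball of $Y^{s, b}_T$ produces local existence, uniqueness and continuous dependence on the data in $\tilde H^s$; Remark \ref{rem:embedding} then upgrades the solution to $C([0, T]; \tilde H^s)$.

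To establish the bilinear estimate I would dualize it into a trilinear form over
\begin{equation*}
	\xi_1 + \xi_2 + \xi_3 = 0, \quad n_1 + n_2 + n_3 = 0, \quad \tau_1 + \tau_2 + \tau_3 = 0,
\end{equation*}
weighted by factors encoding the $Y^{s, b}$ structure. Relative to the $X^{s, b}$ setup, the $Y^{s, b}$ norm enforces a vanishing of order $1/2$ on $\hat u, \hat v$ at $\xi = 0$, a cancellation inherited from $\tilde H^s$ that will be crucial in the resonant analysis. After a standard dyadic decomposition, one of the three modulations $\langle \tau_j - \phi(\xi_j, n_j) \rangle$ controls the resonance function
\begin{equation*}
	\Delta \;=\; \phi(\xi_1, n_1) + \phi(\xi_2, n_2) + \phi(\xi_3, n_3),
\end{equation*}
and the estimate reduces to bounding, uniformly in the size $\Lambda$ of the dominant modulation, the measure of the convolution set restricted to $|\Delta| \les \Lambda$. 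Outside of a specific resonant region one obtains a lower bound on $|\Delta|$ large enough that $L^4$ or bilinear Strichartz type inputs in the spirit of Molinet--Pilod \cite{molinet2015} close the estimate well below $s = 1$.

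The remaining resonant region is handled as announced in the introduction. Imposing the simultaneous vanishing of $\Delta$ and of its directional derivatives along the convolution constraint localizes, up to permutations of the indices, one of the frequencies $(\xi_j, n_j)$ to a neighborhood of a vertex of the triangle with corners $(0, p), (-p/2, -p/2), (p/2, -p/2)$ for some $p \in \{n_1, n_2, n_3\}$. The dangerous vertex is $(0, p)$: there the $X^{s, b}$ bilinear estimate degrades because, in the worst case, $\hat u$ concentrates near $\xi = 0$, and the derivative factor $|\xi|$ coming from $\p_{x_1}$ cannot absorb this concentration unless $s > 3/4$. In the $Y^{s, b}$ setting, the weight $\langle \xi \rangle / |\xi|$ supplies precisely the missing power of $|\xi|$ on that vertex, and a Cauchy--Schwarz on the triangular resonant strip closes the bilinear estimate down to $s > 1/2$. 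I expect the main obstacle to be this triangle analysis itself: rigorously proving that the joint vanishing of $\Delta$ and its tangential derivatives pins resonances to the three listed vertices (and not to some larger one-dimensional variety), producing sharp quantitative measure estimates for the resonant set in terms of $\Lambda$, and matching these against the power of $|\xi|$ supplied by the $Y^{s, b}$ weight so that the threshold $s > 1/2$ is indeed reached.
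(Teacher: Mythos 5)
Your overall architecture is the right one and matches the paper: reduce to the bilinear estimate $\| \p_{x_1}(uv)\|_{Y^{s,b-1+\delta}} \les \|u\|_{Y^{s,b}}\|v\|_{Y^{s,b}}$, dualize, run a dyadic decomposition, locate the obstruction on the triangle $(0,p)$, $(-p/2,-p/2)$, $(p/2,-p/2)$, and use the $Y^{s,b}$ weight to close at $s>1/2$. However, there is a genuine gap in the non-resonant analysis, and a misdescription of the mechanism by which the $Y^{s,b}$ weight saves the resonant case.

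\textbf{The gap.} Your dichotomy is ``either $|\Delta|$ is bounded below, so modulation plus Molinet--Pilod Strichartz wins, or we are in the resonant triangle.'' This dichotomy is false in the crucial comparable-frequency regime $N_1\approx N_2\approx N_3$. There one can have $|\Delta|$ arbitrarily small while the ``derivative parameters'' $|\theta_i-\theta_j|$ (where $\theta_i=\sqrt{3\cdot(\text{first comp.})^2+(\text{second comp.})^2}$) are of order $N$ --- e.g.\ a near-coincidence $\theta_1\approx\theta_3$ together with $\theta_2$ far away, giving $\partial_\nu\Delta = \theta_1^2-\theta_3^2 \approx 0$ at one point of the set but not uniformly, with $\Delta$ crossing zero. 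In this regime the modulation variables $L_i$ give no gain, and the standard bilinear estimates from \cite{molinet2015} are set up for \emph{separated} dyadic frequencies and do not help when all $N_i$ are comparable. What the paper actually does is introduce a dyadic hierarchy in a new parameter $M$ measuring $|\theta_i-\theta_j|$ (the $M$-interactions), together with a \emph{refined bilinear estimate on $O(M)\times O(M)$ boxes} (Proposition~\ref{prop:refined}), whose gain scales like $N^{-1/2}$ precisely when some $|\theta_i-\theta_j|\geq M$. Interactions where instead \emph{all} $|\theta_i-\theta_j|<M$ but $\min\{|\nu|,|\zeta|,|\xi|\}\gtrsim M$ are proved (Lemma~\ref{cor:coro}) to force $|\Delta|\gtrsim N^2M$, and only then does modulation help. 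Neither piece is in your outline, and without them the estimate does not close anywhere near $s=1/2$ in the comparable-frequency non-triangle region.

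\textbf{The mechanism at the vertex.} You write that ``the derivative factor $|\xi|$ coming from $\p_{x_1}$ cannot absorb this concentration'' and that the $Y^{s,b}$ weight ``supplies precisely the missing power of $|\xi|$.'' This is not how the gain works. At the bad configuration the \emph{output} first-frequency $\xi$ is of size $\sim N$ (the output sits near the vertex $(\pm p/2,-p/2)$), so $|\xi|$ from the derivative is large and is not the problem; indeed the dual $Y$-weight on the output makes this factor \emph{weaker}, turning $|\xi|$ into $|\xi|^{1/2}\langle\xi\rangle^{1/2}$. The issue is that one \emph{input} frequency $(\nu,k)$ sits near $(0,p)$ with $|\nu|\ll1$, and the $X^{s,b}$ norm does not penalize concentration of $\hat u$ at small $\nu$. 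The $Y^{s,b}$ weight $\langle\nu\rangle/|\nu|$ on that input supplies only a half-power $|\nu|^{1/2}$, and this is combined (see \eqref{eq:cacahuete}--\eqref{eq:mar}) with the expansion $\langle\Delta\rangle\approx\langle k\nu(\omega+\nu/2)\rangle$ and a further dyadic split in $|\omega+\nu/2|$ to convert the $|\nu|^{1/2}$ into the $N^{-1/4}$ improvement that moves the threshold from $3/4$ to $1/2$. Neither the half-power nature of the gain nor the auxiliary $\Gamma$-dyadic partition is visible in your Cauchy--Schwarz-on-a-strip description.
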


Using the Duhamel formula to formulate the fixed-point problem
\begin{equation} \label{eq:fixed_point}
u = S(t)u_0 + \frac12 \int_0^t S(t-s) \p_{x_1}(u^2) ds 
\end{equation}
one can reduce the proof of Theorems \ref{th:deterministic_Xs_space} and \ref{th:deterministic_Ys_space} to the bilinear estimates
\begin{equation} \label{eq:one1}
	\| \p_{x_1} ( u  v) \|_{X^{s, b-1+\delta}} \les \|  u \|_{X^{s, b}} \|  v \|_{X^{s, b}}, \qquad \mbox{ and } \qquad \| \p_{x_1} ( u  v) \|_{Y^{s, b-1+\delta}} \les \|  u \|_{Y^{s, b}} \|  v \|_{Y^{s, b}}
\end{equation}
for some $\delta > 0$ that is sufficiently small depending on $s$. The argument to deduce Theorems \ref{th:deterministic_Xs_space} and \ref{th:deterministic_Ys_space} from \eqref{eq:one1} is standard in the dispersive literature (see \cite{bourgain1993, Kenig-Ponce-Vega, tao_dispersive}), and in the paper we will focus on proving the estimate \eqref{eq:one1}.

Moreover, we also show that our range of $s$ is optimal (except up to possibly the endpoint) for \eqref{eq:one1}.
\begin{theorem} \label{th:det_fail} The $X^{s, b}$ inequality from \eqref{eq:one1} does not hold for any $s < 3/4$ and the $Y^{s, b}$ inequality does not hold for any $s < 1/2$. 
\end{theorem}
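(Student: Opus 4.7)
The plan is to exhibit explicit counterexamples to each bilinear estimate by taking test functions whose Fourier transforms are concentrated near two vertices of the resonant triangle with vertices $(0, p)$, $(-p/2, -p/2)$, $(p/2, -p/2)$ identified in the introduction, and then letting the integer $p \to \infty$ (through $p \in 2\mathbb{N}$ so that $p/2$ is an integer). I would introduce a width parameter $r > 0$, to be eventually set to $r = p^{-1/2}$, and write $\sigma = \tau - \phi(\xi, n)$ for the output modulation. The first step is to establish the factorization of the resonance function: for $\xi_1 = \alpha$ near $(0, p)$ and $\xi_2 = -p/2 + c$ near $(-p/2, -p/2)$, direct expansion gives
\[
\Omega := \phi(\alpha, p) + \phi(-p/2+c, -p/2) - \phi(\alpha + c - p/2, p/2) = 3\alpha\bigl[p\alpha/2 + pc - \alpha c - c^2\bigr],
\]
from which $|\Omega| \lesssim p r^2$ when $|\alpha|, |c| \leq r$. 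The choice $r = p^{-1/2}$ then ensures $|\Omega| \lesssim 1$ across the entire support, balancing a modulation window of size $1$.

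For the failure of the $X^{s,b}$ inequality ($s < 3/4$) I would take
\begin{align*}
\hat u &= \mathbf{1}_{\{|\xi_1| \leq r,\; n_1 = p,\; |\tau_1 - \phi(\xi_1, p)| \leq 1\}},\\
\hat v &= \mathbf{1}_{\{|\xi_2 + p/2| \leq r,\; n_2 = -p/2,\; |\tau_2 - \phi(\xi_2, -p/2)| \leq 1\}}.
\end{align*}
Direct integration gives $\|u\|_{X^{s,b}}^2 \sim \|v\|_{X^{s,b}}^2 \sim p^{2s} r$. The convolution $\widehat{uv} = \hat u \ast \hat v$ is supported in an $O(r)$-neighborhood of $(-p/2, p/2)$ with $|\widehat{uv}| \sim r$ pointwise; since $|\xi_{\text{out}}| \sim p$ and the output modulation stays $O(1)$, this gives $\|\p_{x_1}(uv)\|_{X^{s,b-1+\delta}}^2 \sim p^{2s+2} r^3$. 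Therefore the squared ratio is $p^{2-2s} r = p^{3/2-2s}$, which diverges as $p \to \infty$ whenever $s < 3/4$.

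For the failure of the $Y^{s,b}$ inequality ($s < 1/2$), the previous $u$ is inadmissible: its support contains $\xi_1 = 0$ where the weight $\langle\xi\rangle/|\xi|$ is non-integrable. My fix is to set
\[
\hat u = |\xi_1|^{1/2}\, \mathbf{1}_{\{|\xi_1| \leq r,\; n_1 = p,\; |\tau_1 - \phi(\xi_1, p)| \leq 1\}},
\]
keeping $\hat v$ unchanged. The factor $|\xi_1|^{1/2}$ is the unique one for which $|\hat u|^2 \cdot \langle\xi\rangle/|\xi| \sim 1$ on the support, giving again $\|u\|_{Y^{s,b}}^2 \sim p^{2s} r$. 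The convolution now satisfies $|\widehat{uv}| \sim r^{3/2}$, because of the extra contribution $\int_{-r}^r |\xi_1|^{1/2}\,d\xi_1 \sim r^{3/2}$. Since the output is supported near $|\xi| \sim p/2$, bounded away from $0$, the $Y$ and $X$ norms are comparable on the output, and the parallel computation gives $\|\p_{x_1}(uv)\|_{Y^{s,b-1+\delta}}^2 \sim p^{2s+2} r^4$, hence a squared ratio of $p^{2-2s} r^2 = p^{1-2s}$, which diverges whenever $s < 1/2$.

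The main obstacle will be the convolution estimate for $|\hat u \ast \hat v|$, which requires carefully tracking the intersection of the two $\xi_1$-supports together with the compatibility of the $\tau_1$-intervals through the resonance bound $|\Omega| \lesssim p r^2$; the delicate step in the $Y^{s,b}$ case is identifying the weight $|\xi_1|^{1/2}$, the unique power that restores admissibility in $Y^{s,b}$ while preserving the input-norm scaling. The single additional power of $r^{1/2}$ it costs in the convolution integral accounts exactly for the gap between the two thresholds $3/4$ and $1/2$. Beyond these points, the argument reduces to elementary integration over explicit scales.
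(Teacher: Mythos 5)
Your proposal is correct, and the $X^{s,b}$ counterexample coincides with the paper's: both place indicator-type wave packets of width $r = p^{-1/2}$ at $(0,p)$ and $(-p/2,-p/2)$, and the modulation stays $O(1)$ because $\Omega \approx 3p\alpha(c + \alpha/2)$ with $|\alpha|,|c| \lesssim p^{-1/2}$ is exactly the paper's $\langle\Delta\rangle \approx \langle k\nu(\omega+\nu/2)\rangle$ from its equation \eqref{eq:cacahuete}. The computation of the ratio $p^{3/2-2s}$ and the conclusion $s < 3/4$ are identical.

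For the $Y^{s,b}$ failure you take a genuinely different route. The paper's construction \emph{translates} the $\nu$-support away from the singular point: it puts $\nu \approx 1/2$ and $\omega \approx -1/4$ in boxes of width $\sim N^{-1}$, so the singular weight $\langle\nu\rangle/|\nu|$ is $O(1)$ on the support, and the resonance stays small because $\omega + \nu/2 = O(N^{-1})$ even though $\nu$ is of order one. Your construction instead \emph{keeps} the same support $|\nu| \leq r = p^{-1/2}$ straddling $\nu = 0$ and compensates the singular weight by inserting $|\nu|^{1/2}$ into $\hat u$; the resonance stays small for the same reason as in the $X^{s,b}$ case (both $\nu$ and $\omega$ are $O(r)$). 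Both constructions exercise the same resonant geometry but in different parametrizations, and both yield the identical divergence rate $p^{1/2-s}$. Your version has the pedagogical merit of using a single family of test functions for both $X^{s,b}$ and $Y^{s,b}$, with the single fractional power $|\nu|^{1/2}$ accounting cleanly for the $1/4$ gap between the thresholds; the paper's version has the minor convenience of working with pure indicator functions throughout. Both are correct and complete.

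One small caution for the write-up: when you claim $|\widehat{uv}| \sim r^{3/2}$ you are implicitly using that the intersection of the two $\nu$-supports is an interval of length $\gtrsim r$ inside $[-2r,2r]$ (valid once $|\xi_{\mathrm{out}} + p/2| \lesssim r$), and that $\int_I |\nu|^{1/2}\,d\nu \gtrsim r^{3/2}$ for any such interval $I$. This is true, but it would be worth stating explicitly, since a sub-interval of length $\ll r$ near $\nu = 0$ would give a smaller contribution; the lower bound on $|I|$ is what saves the estimate.
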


\begin{remark} The fact that inequality \eqref{eq:one1} fails for a certain $s$ also has a direct implication in terms of wellposedness. While the problem \eqref{eq:ZK} could be wellposed below the regularity needed to do a Picard iteration, the examples showing the failing of \eqref{eq:one1} also show that the data-to-solution map will not be $C^2$. This is the same situation as in Kinoshita result for $\mathbb R^2$ \cite{kinoshita2021}. See \cite[Section 6]{BourgainPicardC2} for this argument, and a precise definition of the data-to-solution map being $C^2$.
\end{remark}

\subsubsection*{Outline of the obstructions and the proof} 

For simplicity, when discussing the proof of \eqref{eq:one1} we omit the subindices $2$ and simply denote by $k, m, n$ the second components of the frequency. Let us now do a brief discussion regarding the proof of the above theorems and the obstructions faced. We recall $\Delta := \phi (\nu, k) + \phi (\zeta, m) + \phi (\xi, n)$, where we have that $\xi = -\nu-\zeta$ and $n = -k-m$. The function $\Delta$ corresponds to the temporal frequency that one obtains in the Duhamel formula when looking at the interaction of two waves at frequencies $(\nu, k)$ and $(\zeta, m)$. Thus, when $| \Delta |$ is large, one expects gains from the oscillation, and most of our work will consist on obtaining bounds on the measure of the set of frequencies for which $| \Delta |$ is small (resonant set).

While the problem of bounding such resonant set is common in dispersive equations, it is extremely equation-dependent. Some cases, like KdV, the resonant set admits an easy analysis because the corresponding $\Delta$ can be factorized in first-degree polynomials (and therefore $| \Delta |$ is small when one factor is small, significantly simplifying the problem). In other dispersive equations one does not have those factorization properties, and needs to deal with $\Delta$ which are second-degree polynomials. In our case $\Delta$ is a third-degree polynomial of four variables, with no factorization or simplification. Moreover, some of the variables of $\Delta$ are discrete (due to the periodic second component of the PDE). It is generally much more difficult to bound the resonant set under such conditions. In particular, a direct analysis of the level sets of $\Delta$ in our case does not seem feasible.


Instead, we will use the analysis of the derivatives of $\Delta$ as an important auxiliary tool. For example, we have that expressing $\Delta$ in $  \nu, k_2,   \zeta, m_2$ variables:
\begin{equation*}
	\p_{\nu} \Delta = (3   \nu^2 + k^2) - (3 (-  \nu -   \zeta)^2 + (-k-m)^2 ) = (3   \nu^2 + k^2) - (3  \xi^2 +n^2) =: \theta_1^2 - \theta_3^2,
\end{equation*}
where we defined $\theta_1 = \sqrt{3   \nu^2 + k^2}$ and $\theta_3 = \sqrt{3  \xi^2 +n^2}$. Roughly speaking, this will tell us that for fixed $k, n, \xi$, the condition $| \Delta | \leq C$ happens in a set of $\nu$ of measure at most $\frac{2C}{| \theta_1^2 - \theta_3^2 |}$. While this is not exactly true because $\theta_i$ depend themselves on the frequencies, the argument can be made rigorous using an appropriate dyadic partition on $| \theta_1 - \theta_3 |$.

Specifically, we will have a dyadic-type decomposition that at each step compares the size of $| \Delta |$ with the sizes of $| \theta_i - \theta_j |$ and decides on what bounds should be used based on that comparison. This will be accompanied by a rigidity result showing that when $| \Delta | \leq N^2 M$ and all $| \theta_i - \theta_j | \leq M$ (being $N$ the size of the maximum frequency and $M \leq N$ arbitrary), then the frequencies are within distance $O(M)$ from a triangle of frequencies of the type $(0, p)$, $(-p/2, -p/2)$, $(p/2, -p/2)$.

We will stop the dyadic decomposition at $M = 1$ and call the remaining frequencies 'bad frequencies' (or $S_{\rm{bad}}$). That is, $S_{\rm{bad}}$ corresponds to configurations of frequencies within $O(1)$ distance from the triangle configuration described above. One can see such configuration in Figure \ref{fig:triangle-diagram}. Under such precise configuration, we will be able to analyze explicitly $\Delta$ when the frequencies lie on $S_{\rm{bad}}$. That contribution will be the only one requiring $s>3/4$ (instead of $1/2$) in the $X^{s, b}$ approach, or requiring the $\frac{ \langle \xi \rangle }{ | \xi | }$ weight in the $Y^{s, b}$ approach. Moreover, that type of configuration will also generate the examples that we need to prove Theorem \ref{th:det_fail}.

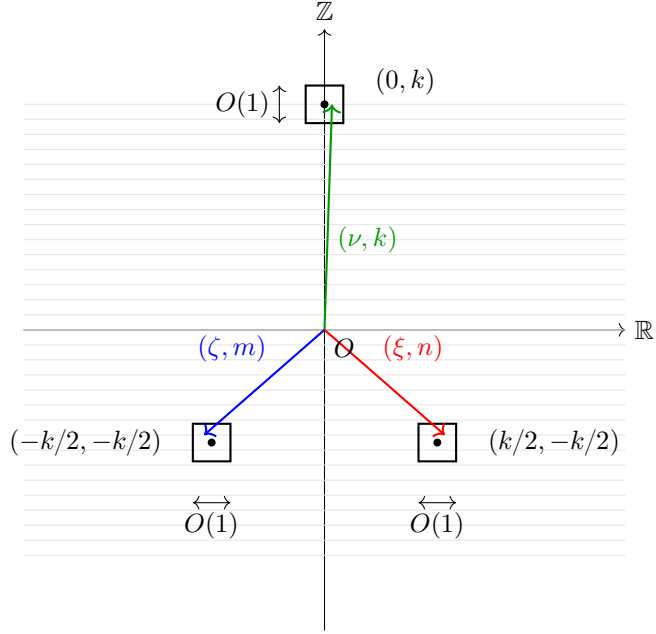
\begin{figure}[htbp]
    \centering
    \begin{tikzpicture}
    \def\k{3} 
    \def\boxsize{0.5} 
    
    \draw[->] (-4,0) -- (4,0) node[right] {$\mathbb{R}$};
    \draw[->] (0,-4) -- (0,4) node[above] {$\mathbb{Z}$};
    
    \foreach \y in {-3,-2.8,...,3}
        \draw[gray!20] (-4,\y) -- (4,\y);
    
    \draw[thick] (-\boxsize/2,\k-\boxsize/2) rectangle (\boxsize/2,\k+\boxsize/2);
    \node[right=0.3cm] at (\boxsize/2,\k+0.3) {$(0,k)$};
    \fill (0,\k) circle (1.5pt);
    \draw[<->] (-0.6,\k-\boxsize/2) -- (-0.6,\k+\boxsize/2) node[midway,left] {$O(1)$};
    
    \draw[thick] (-\k/2-\boxsize/2,-\k/2-\boxsize/2) rectangle (-\k/2+\boxsize/2,-\k/2+\boxsize/2);
    \node[left=0.3cm] at (-\k/2-\boxsize/2,-\k/2) {$(-k/2,-k/2)$};
    \fill (-\k/2,-\k/2) circle (1.5pt);
    \draw[<->] (-\k/2-\boxsize/2,-\k/2-0.8) -- (-\k/2+\boxsize/2,-\k/2-0.8) node[midway,below] {$O(1)$};
    
    \draw[thick] (\k/2-\boxsize/2,-\k/2-\boxsize/2) rectangle (\k/2+\boxsize/2,-\k/2+\boxsize/2);
    \node[right=0.3cm] at (\k/2+\boxsize/2,-\k/2) {$(k/2,-k/2)$};
    \fill (\k/2,-\k/2) circle (1.5pt);
    \draw[<->] (\k/2-\boxsize/2,-\k/2-0.8) -- (\k/2+\boxsize/2,-\k/2-0.8) node[midway,below] {$O(1)$};
    
    \draw[->,green!60!black,thick] (0,0) -- (0.1,\k) node[right,pos=0.4] {$(\nu,k)$};
    
    \draw[->,blue,thick] (0,0) -- (-\k/2-0.1,-\k/2+0.1) node[above left,pos=0.4] {$(\zeta,m)$};
    
    \draw[->,red,thick] (0,0) -- (\k/2+0.1,-\k/2+0.1) node[above right,pos=0.4] {$(\xi,n)$};
    
    \node[below right] at (0,0) {$O$};
\end{tikzpicture}
    \caption{Diagram showing frequencies in $S_{\rm{bad}}$, which corresponds to the main resonance. Each frequency vector should lie in the corresponding box, and the picture is up to rearrangement of the frequencies. In such configuration, we have that $|\Delta | = |\phi (\nu, k) + \phi (\zeta, m) + \phi (\xi, n) |\les 1$. The resonance occurs around the triple cancellation $\Delta = \p_\nu \Delta = \p_\zeta \Delta = 0$.}
    \label{fig:triangle-diagram}
\end{figure}

\subsection{Probabilistic result} \label{subsec:probabilistic}

Now, we look to further improve the deterministic local wellposedness results by considering the case of randomized initial data. We define a smooth even function $\chi_0 : \mathbb R \to [0, 1]$, supported on $[-1, 1]$, such that $\chi_0 (x) + \chi_0 (x-1) = 1$ for all $x \in [0, 1]$. Therefore, we have the pointwise equality $$1 = \sum_{n \in \mathbb Z} \chi_0 (x + n).$$

For any function $f : \mathbb R \times \mathbb Z \to \mathbb C$ and any $k \in \mathbb Z^2$, we define its projection to frequencies around $k = (k_1, k_2)$ as
\begin{equation} \label{eq:Pk}
	\widehat{ P_k f } (\xi, n_2) = \chi_0 (\xi - k_1) \mathbbm{1}_{n_2 = k_2} \hat f (\xi, n_2)
\end{equation} 

\begin{definition}
We say that $u_0$ is \textit{generic} in $H^{(\alpha - 1)-}$ if there exists some $C > 0$ such that
\begin{equation}
	\| \widehat{P_k u_0} \|_{L^\infty} \leq \frac{C}{\langle k \rangle^\alpha}, \qquad \forall k \in \mathbb Z^2.
\end{equation}
\end{definition}

Now, for a generic $u_0$ we consider its randomization
\begin{equation} \label{eq:u01}
	u_{0}^\omega (x) =  \sum_{k\in \mathbb Z^2} g_k( \omega ) P_k u_0 
\end{equation}
where $g_k(\omega)$ are independent and identically distributed complex Gaussians and $\omega$ lives in our probability space. See \cite{LM, Oh2019} for similar randomizations in the fully euclidean setting. In order to enforce real initial data, we take 
\begin{equation}\label{eq:make_real} u_{0, r}^\omega = \text{Re} (u_{0}^\omega) = \frac12 \left( u_{0}^\omega + \overline {u_{0}^\omega} \right) \end{equation} as initial data. From computing $\mathbb E_\omega ( \|u_{0, r}^\omega \|_{H^s}^2 )$ one can check that $u_{0, r}^\omega$ is almost surely on any $H^s$ with $s < \alpha-1$. Now, we present our probabilistic local wellposedness theorem.

\begin{theorem} \label{th:probabilistic} For any $s' > \frac{-1}{26}$, there exist $b > 1/2$ and $s > 1/2$ such that the following holds for every $u_0$ generic on $H^{s'-}$. We consider the Zakharov--Kuznetsov equation \eqref{eq:ZK} with randomized real initial data $u_{0, r}^\omega$ obtained from \eqref{eq:u01}--\eqref{eq:make_real}. Then, almost surely, there exists $T>0$ and a solution on $[0, T]$ of the form 
	\begin{equation*}
		u = S(t)u_{0, r}^\omega + v, \qquad \mbox{ with } v\in Y^{s, b}_T.
	\end{equation*}
	In particular, by Remark \ref{rem:embedding}, we have $v \in C([0, T], \tilde H^s (\mathbb R \times \mathbb T ))$. 
\end{theorem}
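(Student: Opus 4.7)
I follow the Bourgain decomposition: set $z(t) := S(t) u_{0,r}^\omega$ and write the sought solution as $u = z + v$. Substituting into \eqref{eq:ZK} and using that $z$ solves the linear problem, the remainder $v$ satisfies $v(0) = 0$ and, via Duhamel,
\begin{equation*}
v = \tfrac12 \int_0^t S(t-s)\, \partial_{x_1}(z^2 + 2 z v + v^2) \, ds.
\end{equation*}
The plan is to solve this by a contraction mapping in a small ball of $Y^{s,b}_T$ for some $s > 1/2$, $b > 1/2$, and a (random) $T > 0$ small, so that Remark \ref{rem:embedding} then yields the stated continuity. The $v^2$ contribution is already controlled by the deterministic bilinear estimate \eqref{eq:one1} from Theorem \ref{th:deterministic_Ys_space}. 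Hence the probabilistic content of the argument reduces to proving, almost surely, bounds of the form
\begin{equation*}
\|\partial_{x_1}(z^2)\|_{Y^{s,b-1+\delta}_T} \leq C(\omega)\, T^\theta, \qquad \|\partial_{x_1}(zv)\|_{Y^{s,b-1+\delta}_T} \leq C(\omega)\, T^\theta \|v\|_{Y^{s,b}_T},
\end{equation*}
for some $\theta > 0$ producing the smallness needed to close the fixed point.

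\textbf{A priori controls on $z$.} The analysis hinges on combining two complementary bounds on $z$. The first is the standard $L^2$-based control $z \in X^{s'',b''}$ almost surely for every $s'' < \alpha - 1$, which is simply the regularity of $u_{0,r}^\omega$ preserved by the linear propagator. The second is a much sharper $L^\infty$-based bound $\|\widehat{P_k z}\|_{L^\infty_{\xi,\tau}} \lesssim |g_k(\omega)| \langle k \rangle^{-\alpha}$, which follows directly from the explicit formula $\widehat{P_k u_{0,r}^\omega}(\xi,n) \sim g_k(\omega) \widehat{P_k u_0}(\xi,n)$ and the generic hypothesis $\|\widehat{P_k u_0}\|_{L^\infty} \lesssim \langle k \rangle^{-\alpha}$. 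The hybrid norm framework advertised in the introduction is precisely designed to let both bounds feed into a single bilinear estimate.

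\textbf{The $z^2$ contribution.} Since $z^2$ is a second-order Wiener chaos in the variables $\{g_k\}$, Gaussian hypercontractivity reduces the bound in probability to a second-moment computation in $\omega$. Expanding in the Gaussian basis and using the decorrelation of $\{g_k\}$ leaves a diagonal sum in frequency whose finiteness is obtained by essentially the same resonant-set analysis used for the deterministic theorems.

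\textbf{The main obstacle: the interaction estimate.} The core difficulty is controlling $\|\partial_{x_1}(zv)\|_{Y^{s,b-1+\delta}_T}$, since $z$ lies at a regularity far below what any standard $X^{s,b}$ bilinear inequality requires. I would revisit the dyadic resonant-set decomposition developed for the deterministic bilinear estimate, treating each regime separately. In the non-resonant regimes (where $\phi(\nu,k)+\phi(\zeta,m)+\phi(\xi,n)$ is large or the configuration is far from $S_{\rm bad}$), the factor $\langle \tau - \phi \rangle^{b-1+\delta}$ together with the existing resonant-set machinery provides enough gain to absorb the low-regularity loss of $z$ using only the $L^2$ control. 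In the triangular region $S_{\rm bad}$ of Figure \ref{fig:triangle-diagram}, however, the deterministic bound is already tight, and the worst-case scenario is when one of the Fourier factors concentrates on a set of measure $O(N^{-2})$ (with $N$ the ambient frequency size); with only the $L^2$ bound, such concentration could inflate the pointwise size of $\widehat{P_k z}$ by a factor $N$ and spoil the estimate. The $L^\infty$-based bound controls this pointwise size directly and therefore effectively gains a factor $N$, which is exactly what compensates the regularity gap of size $N$ between the $\alpha - 1$ and $\alpha$ scales. This is where the auxiliary $L^\infty$-based term becomes indispensable. The explicit threshold $s' > -1/26$ should emerge from optimizing these contributions dyadically in frequency. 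Once both probabilistic estimates are in place, a standard contraction in $Y^{s,b}_T$ yields $v$ almost surely and the theorem follows.
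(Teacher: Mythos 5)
Your high-level plan — Bourgain's decomposition $u = z + v$, closing a contraction for $v$, the $z^2$ term via a second-moment-in-$\omega$ computation, and exploiting the $L^\infty$ bound on $\widehat{P_k z}$ coming from genericity — is all consistent with the paper, and you correctly identify that the $z v$ interaction is the heart of the matter. However, there is a genuine gap in how you close the contraction, and you have the roles of $z$ and $v$ reversed in the crucial resonant case.

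You propose to run the fixed point in a ball of $Y^{s,b}_T$. That space is not strong enough, and this is precisely what forces the introduction of $Z^{s,b}_T$ in \eqref{eq:Zsb_norm}. The obstruction is not that $\widehat{P_k z}$ might concentrate on a thin set (you already control $\widehat{P_k z}$ in $L^\infty$ for free, from genericity plus the almost-sure polynomial bound on $|g_k|$); it is that $\widehat{v}$, the \emph{unknown}, might concentrate on a very small set of low frequencies. In the resonant case $\mathcal C_{\rm res}$ of Subsection \ref{subsec:mixed}, the constraint $|\theta_1 - \theta_3| \lesssim M_2 \ll 1$ pins $\zeta$ into an interval of size $O(N^{-9/13})$; a function with $\|\widehat{v}\|_{L^2} = 1$ could take values of order $N^{9/26}$ there, and this ruins the estimate if one only knows $v \in Y^{s,b}$. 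The auxiliary $L^\infty$-based term in the $Z^{s,b}$ norm — applied to $v$, not to $z$ — forbids exactly this concentration, and sits at a lower regularity precisely because the concentration issue only arises for low frequencies of $v$. Consequently Proposition \ref{prop:mixed} takes $\|v\|_{Z^{s,b}}$ (not $\|v\|_{Y^{s,b}}$) on the right-hand side, and one must additionally prove the $L^\infty$-output estimates of Proposition \ref{prop:Linfty} so that the $L^\infty$ part of the $Z^{s,b}$ norm is actually reproduced by the Duhamel iteration. Your proposal omits both ingredients: the stronger norm on $v$ in the interaction estimate, and the companion $L^\infty$ bounds needed to make $Z^{s,b}_T$ a self-consistent fixed-point space. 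Without them the contraction does not close.
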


For convenience, we will work with $\alpha = s' + 1 > \frac{25}{26}$. We stress that the remainder $v$ is in $Y^{s, b}_T$, with $s > 1/2$, and in particular it is smoother than the linear evolution of the initial data (which just has the almost sure $H^{(\alpha-1)-}$ regularity of the initial data). The uniqueness of $v$ will be guaranteed in a stronger space $Z^{s, b}_T \subset Y^{s, b}_T$, which we define later in \eqref{eq:Zsb_norm}.

\begin{remark} 
	
	Let us justify the assumption of generic initial data. This type of generic initial data is the natural extension of the random initial data considered in the literature for fully periodic cases to our $\mathbb R \times \mathbb T$ space. In the fully periodic case (see \cite{bourgain_2Dinvariant, burq2008, burq2008b, DNY2}) one simply considers $\hat u_0^\omega (n) =  \frac{g_n^\omega}{\langle n \rangle^{\alpha}}$ where the regularity and $\alpha$ are linked by $\alpha = \frac{d}{2} + s + \eps$ (in our case, $d=2$), since that $\alpha$ will generate that $u_0^\omega \in H^s$ almost surely.
	
	The analogy to the above works would be to pick $\hat{u_0^\omega} (\xi, n_2) = \frac{g_n^\omega}{\langle n \rangle^\alpha }$ being $n_1$ some integer with $| \xi - n_1 | \les 1$. However, in the euclidean setting this avoids distinguishing frequencies that are close to each other, so we have preferred to keep more generality in the $\mathbb R \times \mathbb T$ case and allow for any $P_k u_0$ which has the corresponding decay in $L^\infty$. This does not make any significant difference in terms of the analysis.
	
\end{remark}

\begin{remark}
	The fact that we use Gaussians in our randomization is just a convention in the probabilistic local wellposedness literature. However, the result would follow for any random variable with exponentially decaying tails with the same argument. The crucial property is that the random variables $g_k^\omega$ randomizing the initial data are \emph{independent}.
\end{remark}


\subsubsection*{Outline of the proof and obstructions}

Going back to the pioneering works of Bourgain \cite{bourgain1993, bourgain_2Dinvariant}, an important idea to show wellposedness of PDE with random initial data has been to subtract the linear evolution. Then, the remainder will gain some regularity from the randomness on the initial data, so that one can work at a higher regularity level with the equation for the remainder. We take $v = u - S(t) u_{0, r}^\omega$, which satisfies the equation
\begin{equation} \label{eq:pert_PDE}
	\begin{cases}
		(\p_t + \p_{x_1} \Delta ) v &= \frac12 \p_{x_1}(v^2) + \frac12 \p_{x_1} ((S(t)u_{0, r}^\omega)^2) + \p_{x_1} (v S(t) u_{0, r}^\omega ) \\
		v(x, 0) &= 0.
	\end{cases}
\end{equation} 

Similarly to the deterministic setting, local wellposedness for $v$ would be proved by bounding the right hand side in $X^{s, b-1+\delta}$ norm in terms of $\| v \|_{X^{s, b}}$ (or the same with $Y^{s, b}$ norms). However, that direct approach will not work for any $\alpha < 1$, due to a \emph{new problematic high-low type resonance} appearing in the term $\p_{x_1} (S(t)u_{0, r}^\omega ) v$.

As an example, consider that $\hat v$ is concentrated at frequencies $(\zeta, m_2)$ which are of size $O(1)$ and are at a distance $O(N^{-2})$ from the line $\sqrt{3} \zeta =  m_2$, for some large $N$. Then, consider its interaction with frequencies $(\nu, k_2)$ of $\p_x S(t) u_{0, r}^\omega$ which are of size $O(N)$ and located at distance $O(1)$ from the line $\sqrt{3} \nu = - k_2$. One can check that in those cases $| \Delta | \les 1$, so it is not difficult to show that $\| \p_{x_1} ( S(t) u_{0, r}^{\omega} ) v \|_{Y^{s, b}}$ is not bounded in terms of $\| v \|_{Y^{s, b}}$ when $\alpha < 1$.

However, that scenario happens when $\hat v (\zeta, m_2)$ is concentrated in a very small set of low frequencies. Since the set has small measure, an $L^\infty$-based norm on frequency space will give us a better control than the $L^2$-based $Y^{s, b}$-type spaces. Moreover, since the example we want to control happens at low frequencies of $v$, we can still get a better control even when the $L^\infty$-based norm is set at a lower regularity.

We thus define
\begin{equation} \label{eq:Zsb_norm}
\| v \|_{Z^{s, b}}^2 =  \left\| v \right\|_{Y^{s, b}}^2 +  \left\| \frac{\langle \xi \rangle}{|\xi|}\left\| \langle \tau - \phi(\xi, n_2) \rangle^{b} \hat v \right\|_{L^2_{\tau}} \right\|_{L^\infty_{\xi, n_2}}^2
\end{equation}
and its corresponding localized version as in \eqref{eq:localized_spaces}. While the first term of the norm has regularity $s > 1/2$, the second one is at the $L^\infty$ level.

We note that such type of $L^p$-based $X^{s, b}$ has been considered before. First, Gr\"unrock \cite{grunrock2004} considered them with the same $p$ on the temporal and spatial frequency, in the context of the KdV equation. Then, Gr\"unrock and Herr \cite{grunrock2008} considered the spaces in more generality when studying the derivative NLS equation, allowing for $L^p$-based spatial frequencies and $L^q$-based temporal frequencies, with $p \neq q$. In the context of the probabilistic local wellposedness, they have also been used by Nahmod, Oh, Rey-Bellet and Staffilani \cite{nahmod2012} to study the probabilistic wellposedness and Wiener measure of derivative NLS. The novelty of our approach consists on using these norms as a \textit{lower regularity auxiliary terms} with a standard $L^2$-based norm at the top regularity level.

Using a standard Duhamel fixed point argument like in the deterministic case, we can reduce the local wellposedness of \eqref{eq:pert_PDE} for $v \in Z^{s, b}_T$ to the set of estimates:
\begin{align} \label{eq:beck1} \begin{split}
		&\left\| \p_{x_1}(v w) \right\|_{Z^{s, b-1+\delta}_T} \les  \| v \|_{Z^{s, b}_T} \| w \|_{Z^{s, b}_T}, \qquad
		\left\|  \p_{x_1}(v S(t)u_{0, r}^\omega )  \right\|_{Z^{s, b-1+\delta}_T} \les \| v \|_{Z^{s, b}_T}, \\
		&\left\|  \p_{x_1}( (S(t)u_{0, r}^\omega)^2 )  \right\|_{Z^{s, b-1+\delta}_T} \les 1, 
		\qquad \forall v, w \in Z^{s, b}_T,
\end{split} \end{align}
where the implicit constant in $\les$ is allowed to depend on the parameters $\delta, s, b$; but not on $T$, $v$ or $w$. Thus, the time of existence of the solution may also depend on $\delta, s, b, \omega$. 

Therefore, our proof will consist on showing \eqref{eq:beck1}. That yields the existence and uniqueness for the problem \eqref{eq:pert_PDE} with $v \in Z_{T}^{s, b}$, and as a consequence $v \in Y_T^{s, b}$ and $u(\cdot, t) \in S(t)u_{0, r}^\omega + \tilde H^s (\mathbb R \times \mathbb T)$ for all $t \in [0, T]$.

We also remark that Bourgain's technique of subtracting the linear evolution of the initial data was greatly generalized in the recent works of Deng, Nahmod, and Yue \cite{DNY, DNY2}. While the techniques they use differ from the ones presented here, their method also concerns improving the regularity that Bourgain's method would yield, specifically regarding resonant high-low interactions where $S(t)u_0^\omega$ is at high frequency. Therefore, it is reasonable to expect that their set of techniques (possibly combined with $Z^{s, b}$ norms) could allow to lower the regularity. In contrast with the deterministic case, we do not believe that the regularity $s' > -\frac{1}{26}$ in Theorem \ref{th:probabilistic} is optimal.

\subsection{Organization of the paper}

In Section \ref{sec:deterministic} we prove the inequalities in \eqref{eq:one1}, which yield the proofs of Theorems \ref{th:deterministic_Xs_space} and \ref{th:deterministic_Ys_space}. We also include the proof of Theorem \ref{th:det_fail} in Subsection \ref{subsec:det_fail}.

In Section \ref{sec:prob} we prove Theorem \ref{th:probabilistic}. Concretely, in Subsection \ref{subsec:inho} we perform the $Y^{s, b}$ bounds on $\p_{x_1} (S(t) u_{0, r})^2$, and in Subsection \ref{subsec:mixed} the $Y^{s, b}$ bounds corresponding to $ \p_{x_1} (S(t) u_{0, r}^\omega v )$, while the $Y^{s, b}$ bounds on $\p_{x_1}(vw)$ follow directly from the deterministic bounds. Lastly, in Subsection \ref{subsec:Linfty}, we perform the auxiliary $L^\infty$ bounds that allow us to upgrade the $Y^{s, b}$ estimates to $Z^{s, b}$.

\subsection{Notation} \label{subsec:not}
We will always have three frequencies on $\mathbb R \times \mathbb Z$. The first frequency is denoted by $(\nu, k_2)$, the second one by $(\zeta, m_2)$ and the third one by $(\xi, n_2)$ and they will satisfy that $\nu + \zeta + \xi = 0$ and $k_2 + m_2 + n_2 = 0$. Thus, a frequency configuration can be parametrized by $4$ parameters. In the probabilistic part, we will also consider $\tilde \nu, \tilde \zeta, \tilde \xi \in (-1, 1)$ corresponding to arguments of functions like $P_k u$ (which we recall from \eqref{eq:Pk}), and we will define integer values $k_1, m_1, n_1$ so that $\xi = \tilde \xi + n_1$, $\zeta = m_1 + \tilde \zeta $ and $\nu = k_1 + \tilde \nu$. Thus, $k, m, n \in \mathbb Z^2$ are integer points within distance $1$ from the actual frequency $(\nu, k_2)$, $(\zeta, m_2)$ or $(\xi, n_2)$.

In the deterministic part (Section \ref{sec:deterministic}), the notation is slightly different, since we will not need to approximate the first component of the frequency by an integer. Thus, in that section, $k, m, n$ refers to the \emph{second component of the frequency}, so that the frequencies are $(\nu, k)$, $(\zeta, m)$ and $(\xi, n)$ in that case. In the probabilistic part, we keep $k, m, n \in \mathbb Z^2$, with $k_1 \in \{ \lfloor \nu \rfloor, \lceil \nu \rceil \}$ (analogously for $m_1, n_1$).

We will abbreviate $$\phi_1 := \phi(\nu, k_2), \qquad \phi_2 := \phi (\zeta, m_2), \qquad \phi_3 := \phi (\xi, n_2), \qquad \Delta := \phi_1 + \phi_2 + \phi_3.$$ We will denote the time-frequencies by $\mu$, $\eta$ and $\tau$, which also add up to zero. We also define $$\tilde \mu := \mu - \phi_1, \qquad \tilde \eta := \eta - \phi_2, \qquad \tilde \tau := \tau - \phi_3, \qquad \tilde \mu + \tilde \eta + \tilde \tau = -\Delta$$
where the last equation follows from $\mu + \eta + \tau = 0$ and the definition of $\Delta$.

We will also consider dyadic values of different frequencies. To that purpose, denote $d(x)$ to be the dyadic value of $x$ (for $x \in \mathbb R$ or $x\in \mathbb R^2$). That is, $d(x)$ is the integer power of $2$ such that $d(x) \leq |x| < 2 d(x)$ (or simply $|x| < 2$ when $d(x) = 1$). We define $$ N_1 = d(\nu, k_2), \quad N_2 = d (\zeta, m_2), \quad N_3 = d(\xi, n_2), \quad N_{\ast 1} = d(\nu), \quad N_{\ast 2} = d(\zeta ), \quad N_{\ast 3} = d(\xi) $$ 
$$L_1 = d(\tilde \mu), \quad L_2 = d(\tilde \eta), \quad L_3 = d(\tilde \xi)$$
\begin{align*}
	L_{\rm{min}} = \min \{ L_1, L_2, L_3 \} \qquad L_{\rm{med}} = \mathrm{med} \{ L_1, L_2, L_3 \} \qquad L_{\rm{max}} = \max \{ L_1, L_2, L_3 \} \\
	N_{\rm{min}} = \min \{ N_1, N_2, N_3 \} \qquad N_{\rm{med}} = \mathrm{med} \{ N_1, N_2, N_3 \} \qquad N: = N_{\rm{max}} = \max \{ N_1, N_2, N_3 \}
\end{align*}
 Notice that we also denote $N_{\rm{max}}$ simply by $N$. Here, the function $\text{med}$ of three arguments refers to the one that is neither the maximum nor the minimum (if some value is repeated, $\text{med}$ gives that repeated value). It should be noted that $N_{\rm{med}} \geq N_{\rm{max}}/4$ from triangular inequality, since the sum of frequencies is zero. In the case of $L_i$, since $\tilde \mu + \tilde \eta + \tilde \tau = - \Delta$, one has $L_{\rm{max}} \les L_{\rm{med}} + | \Delta | \les L_{\rm{med}} + N^3$. We also define the projections 
 $$\widehat{P_N u}(\xi, n_2, \tau) = \hat u (\xi, n_2, \tau) \mathbbm{1}_{d(\xi, n_2) = N}, \qquad \mbox{ and } \qquad
 \widehat{Q_L u}(\xi, n_2, \tau) = \hat u (\xi, n_2, \tau) \mathbbm{1}_{d(\tau - \phi(\xi, n_2) = L}$$

We denote $$\theta_1 := \sqrt{ 3  \nu + k_2^2}, \qquad \theta_2 := \sqrt{ 3 \zeta^2 + m_2^2 }, \qquad \theta_3 := \sqrt{ 3 \xi ^2 + n_2^2 }.$$ We will define the dyadic parameters $M_i$ measuring $|\theta_j - \theta_k |$. Contrary to the case of $L_i$, $N_i$ and $N_{\ast i}$, these parameters $M_i$ may be smaller than $1$. We define the smallest possible value of $M_i$ to be:
\begin{align*}
M_{i,\rm{min}} = \min \{ 2^q \; : \; q \in \mathbb Z, \mbox{ and } 2^q \geq N_{\ast i}^{1/2}/N \}.
\end{align*}
Letting $d'(x)$ be the biggest power of two smaller or equal than $|x|$ (either positive power or negative power), we define
\begin{equation*}
M_1 := \max \{ d'(\theta_2 - \theta_3), M_{1, \rm{min}} \}, \quad 
M_2 := \max \{ d'(\theta_1 - \theta_3), M_{2, \rm{min}} \}, \quad 
M_3 := \max \{ d'(\theta_1 - \theta_2), M_{3, \rm{min}} \}. 
\end{equation*}
The reason behind this truncation is that the corresponding estimates will not improve when splitting at smaller scales than $M_{i, \rm{min}}$.

We will denote by $S$ the set of all possible tuples describing the frequencies:
\begin{equation*}
	S = \{(\nu, \zeta, \xi, k_2, m_2, n_2, \mu, \eta, \tau) \quad \mbox{ s.t. } \quad \nu + \zeta + \xi = 0, \quad k_2+m_2+n_2 = 0, \quad \mu + \eta + \tau = 0\}.
\end{equation*}
We denote one of its elements by $\sigma = (\nu, \zeta, \xi, k, m_2, n_2, \mu_2, \eta, \tau)$, and $d\sigma = d\nu d\zeta dk_2 dm_2 d\mu d\eta$, where $dk_2$ and $dm_2$ are discrete measures over $\mathbb Z$. Here, we determine $\xi, n_2, \tau$ via the other frequencies, but one can parametrize $\sigma$ by other 6-dimensional combinations of frequencies. We will use $\bar S$ and $\bar \sigma$ whenever we are including just the spatial frequencies. We will also denote tuples of dyadic numbers by $\iota = (N_i, N_{\ast i}, L_i, M_i)_{i=1}^3$, and we will denote by $S_\iota$ or $\bar S_{\iota}$ to the subset of frequencies that satisfy the corresponding restrictions. Moreover, if we want to fix some of the frequencies, we indicate it with superindices on $S, \bar S$. For example $\bar S^{\xi, n_2}_\iota$ describes the possible frequencies $\nu, \zeta, k_2, m_2$ satisfying the dyadic restrictions from $\iota$, for some fixed $\xi, n_2$. 

We use the bracket notation $\langle k \rangle = \sqrt{1+|k|^2}$.  We will denote $\tilde u = \mathcal F^{-1}| \hat u |$, that is, the function that is obtained by taking absolute value in frequency space. We also use the notation $A \les B$ to indicate that $A \leq C B$ for some universal constant $C$ independent of all the parameters. We use $A \approx B$ to indicate $A \les B$ and $B \les A$. We use $A \les_\beta B$ to indicate that the implicit constant may depend on $\beta$. We reserve the letter $C$ for constants that may change between different arguments.

\subsection*{Acknowledgements}

Except for small parts of the writing phase, this work has been done while I was completing my PhD at MIT under the supervision of Gigliola Staffilani. I would like to thank her for suggesting the problem, for extremely helpful discussions about it, and for being an extraordinary supervisor.

GCL has been supported by NSF under grants DMS-2052651 and DMS-2306378, and partially supported by the MICINN (Spain) research grant number PID2021–125021NA–I00.

\section{Proof of the Theorems \ref{th:deterministic_Xs_space}, \ref{th:deterministic_Ys_space} and \ref{th:det_fail}}  \label{sec:deterministic} \label{sec:maindet}

We recall that the proof of Theorems \ref{th:deterministic_Xs_space} and \ref{th:deterministic_Ys_space} follows from the inequalities in \eqref{eq:one1}. This section is organized as follows. The introductory subsection \ref{subsec:det1} separates the frequency space between 'good' and 'bad' interactions, and reduces the proof of \eqref{eq:one1} to Proposition \ref{prop:maindet}. Then, subsection \ref{subsec:det2} separates the corresponding integrals dyadically and deals with the high-low interactions, while leaving the most difficult case where all frequencies are of comparable sizes. In order to deal with that case, we will introduce new bilinear estimates, an analysis of the resonant set and a partition of the frequency space in \ref{subsec:det3}. Using those tools, we deal with the remaining 'good' interactions in \ref{subsec:det4}. We treat the 'bad' interactions in \ref{subsec:det5}, and lastly, we prove Theorem \ref{th:det_fail} in \ref{subsec:det6}.

\subsection{Reduction of the proof of 
 \eqref{eq:one1} }  \label{subsec:det1} 

In order to show \eqref{eq:one1}, it is useful to use duality and show the estimate by testing against any $w$ in the dual space. Let us note that the dual space of $Y^{s, b}$ with respect to the $L^2$ inner product $\langle f, g \rangle = \int fg$ is given by $Y^{-s, -b}_\ast$, where
\begin{equation*}
	\| f \|_{Y^{s', b'}_\ast}^2 = \int_\tau \int_\xi \sum_{n} | \hat f (\xi, n, \tau) |^2 \langle \tau - \phi( \xi, n) \rangle^{2b'} \langle |(\xi, n)|\rangle^{2s'} \frac{| \xi |}{\langle \xi \rangle}.
\end{equation*}
The dual space of $X^{s, b}$ with respect to the $L^2$ inner product is just given by $X^{-s, -b}$. We take $b = 1/2+\delta$, so that we can reduce to show the estimates 
\begin{align} \begin{split} \label{eq:twoo}
		\langle \p_{x_1}(u  v),  w \rangle_{L^2}  &\les \|  u \|_{X^{s, \frac12 + \delta}} \|  v \|_{X^{s, \frac12 + \delta}} \| w \|_{X^{-s, \frac12 - 2\delta}}, \quad \forall u, v \in X^{s, \frac12 + \delta}, \quad \forall w \in X^{-s, \frac12 - 2\delta}, \quad s > \frac34, \\
		\langle \p_{x_1}(u  v), w \rangle_{L^2}  &\les \|  u \|_{Y^{s, \frac12 + \delta}} \| v \|_{Y^{s, \frac12 + \delta}} \|  w \|_{ Y^{-s, \frac12 - 2\delta}_\ast}, \qquad \forall u, v \in Y^{s, \frac12 + \delta}, \quad \forall w \in Y^{-s, \frac12 - 2\delta}_\ast, \quad s > \frac12.
\end{split} \end{align}

It is normally more convenient to explicitly extract the weights $u, v, w$ have due to their spaces, and do the estimates with $u, v, w$ in $L^2$. We can achieve this defining
\begin{align*}
	\hat u_{\rm{new}} (\nu, k, \mu) &= \langle |(\nu, k) | \rangle^{-s} \langle \phi(\nu, k) - \mu \rangle^{-\frac12 - \delta} \left( \frac{|\nu|}{\langle \nu \rangle} \right)^\gamma \hat u (\nu, k, \mu ), \\
	\widehat{v}_{\rm{new}} (\zeta, m, \eta) &= \langle |(\zeta, m) | \rangle^{-s} \langle \phi(\zeta, m) - \eta \rangle^{-\frac12 - \delta} \left( \frac{|\zeta|}{\langle \zeta \rangle}  \right)^\gamma \widehat{v} (\zeta, m, \eta), \\
	\widehat{w}_{\rm{new}} (\xi, n, \tau) &= \langle |(\xi, n)| \rangle^s \langle \phi(\xi , n ) - \tau \rangle^{-\frac12 + 2\delta} \left( \frac{\langle \xi \rangle}{| \xi |} \right)^{\gamma}\widehat{w} (\xi, n, \tau),
\end{align*}
where $\gamma = \frac12$ if we are working in $Y^{s, b}$ spaces and $\gamma = 0$ for $X^{s, b}$ spaces. Note also that $\| u_{\rm{new}} \|_{L^2} = \| u \|_{X^{s, 1/2+\delta}}$ (for $\gamma = 0$) and $\| u_{\rm{new}} \|_{L^2} = \| u \|_{Y^{s, 1/2+\delta}}$ (for $\gamma = 1/2$). The same happens with $v$, and with $w$ we have $\| w_{\rm{new}} \|_{L^2} = \| w \|_{X^{-s, 1/2-2\delta}}$ (for $\gamma = 0$) and $\| w_{\rm{new}} \|_{L^2} = \| w \|_{Y^{s, 1/2-2\delta}_\ast}$ (for $\gamma = 1/2$).
Note that the normalisation of $w$ is different because $w \in Y^{-s, \frac12 - 2\delta}_\ast$. Then, estimate \eqref{eq:twoo} reads
\begin{align} \begin{split}\label{eq:three}
		&\left| \int_{S} \left( \frac{\langle \xi \rangle |\nu| | \zeta | }{| \xi | \langle \zeta \rangle \langle \nu \rangle } \right)^\gamma \frac{  | \xi |\langle (\xi, n) \rangle^s}{\langle (\nu, k) \rangle^s \langle (\zeta, m) \rangle^s} \frac{ \hat{u}_{\rm{new}}(\nu, k, \mu) \hat{v}_{\rm{new}} (\zeta, m, \eta) \hat{w}_{\rm{new}}(\xi, n, \tau)  }{\langle \tau - \phi(\xi, n) \rangle^{\frac12 - 2\delta}\langle \eta - \phi(\zeta, m) \rangle^{\frac12 + \delta} \langle \mu - \phi(\nu, k) \rangle^{\frac12 + \delta} } d\sigma \right|  \\
		&\qquad \les \| u_{\rm{new}}  \|_{L^2} \| v_{\rm{new}} \|_{L^2} \| w_{\rm{new}} \|_{L^2},
\end{split} \end{align}
where we refer to the notation subsection \ref{subsec:not} for the definition of the set of frequencies $S$ and the measure $d\sigma$.

We moreover partition $S$ with the disjoint union $S = S_{\rm{good}} \cup S_{\rm{bad}}$. We define $S_{\rm{bad}}$ to be the subset of $S$ satisfying:
\begin{align} \begin{split} \label{eq:defSbad}
		|\nu | &\leq \frac{1}{10}, \qquad \left| \zeta + \frac{k}{2} \right|, \left| \xi - \frac{k}{2} \right| \leq \frac{1}{10}, \qquad m = n = \frac{-k}{2}, \\ 
		&\mbox{ and } \qquad | \mu - \phi(\nu, k)|, | \eta - \phi(\zeta, m) |, | \tau - \phi(\xi, n) | \leq 2048|k|^3
\end{split} \end{align}
up to rearrangement of the triplets $(\nu, k, \mu)$, $(\zeta, m, \eta)$ and $(\xi, n, \tau)$.  We define $S_{\rm{good}} = S \setminus S_{\rm{bad}}$. 

The use of $Y^{s, b}$ spaces is only relevant in the region $S_{\rm{bad}}$, which is very small. Thus, in $S_{\rm{good}}$, we can use the bound $\left( \frac{\langle \xi \rangle |\nu| | \zeta | }{| \xi | \langle \zeta \rangle \langle \nu \rangle } \right)^\gamma |\xi|\les \langle \xi \rangle$ for both $\gamma = 0$ and $\gamma = 1/2$. This reduces the proof of \eqref{eq:three} (and therefore, the proof of Theorems \ref{th:deterministic_Xs_space} and \ref{th:deterministic_Ys_space}) to proving the following proposition.
\begin{proposition} \label{prop:maindet} We have that for any $s > 1/2$:
	\begin{equation} \label{eq:maindet1}
		\left| \int_{S_\mathrm{good}} \frac{  \langle \xi  \rangle \cdot \langle (\xi, n_2) \rangle^s}{\langle (\nu, k_2) \rangle^s \langle (\zeta, m_2) \rangle^s} \frac{ \hat{u}(\nu, k_2, \mu) \hat{v} (\zeta, m_2, \eta) \hat{w}(\xi, n_2, \tau)  d\sigma }{\langle \tau - \phi(\xi, n_2) \rangle^{\frac12 - 2\delta}\langle \eta - \phi(\zeta, m_2) \rangle^{\frac12 + \delta} \langle \mu - \phi(\nu, k_2) \rangle^{\frac12 + \delta} } \right|  \les \| u  \|_{L^2} \| v \|_{L^2} \| w \|_{L^2}.
	\end{equation}
	Moreover, we have that for $s > 1/2$:
	\begin{align} \label{eq:maindet2} \begin{split} 
			&\left| \int_{S_\mathrm{bad}} \frac{| \xi |^{1/2} |\nu|^{1/2} | \zeta |^{1/2} }{\langle \xi \rangle^{1/2} \langle \zeta \rangle^{1/2} \langle \nu \rangle^{1/2} } \frac{  \langle \xi \rangle \cdot \langle (\xi, n_2) \rangle^s}{\langle (\nu, k_2) \rangle^s \langle (\zeta, m_2) \rangle^s} \frac{ \hat{u}(\nu, k_2, \mu) \hat{u} (\zeta, m_2, \eta) \hat{w}(\xi, n_2, \tau) d\sigma }{\langle \tau - \phi(\xi, n_2) \rangle^{\frac12 - 2\delta}\langle \eta - \phi(\zeta, m_2) \rangle^{\frac12 + \delta} \langle \mu - \phi(\nu, k_2) \rangle^{\frac12 + \delta} } \right| \\
			\qquad &\les \| u  \|_{L^2} \| v \|_{L^2} \| w \|_{L^2},
	\end{split} \end{align}
	and for any $s > 3/4$:
	\begin{equation} \label{eq:maindet3}
		\left| \int_{S_\mathrm{bad}} \frac{  \langle \xi  \rangle \cdot \langle (\xi, n_2) \rangle^s}{\langle (\nu, k_2) \rangle^s \langle (\zeta, m_2) \rangle^s} \frac{ \hat{u}(\nu, k_2, \mu) \hat{v} (\zeta, m_2, \eta) \hat{w}(\xi, n_2, \tau) d\sigma }{\langle \tau - \phi(\xi, n_2) \rangle^{\frac12 - 2\delta}\langle \eta - \phi(\zeta, m_2) \rangle^{\frac12 + \delta} \langle \mu - \phi(\nu, k_2) \rangle^{\frac12 + \delta} } \right|  \les \| u  \|_{L^2} \| v \|_{L^2} \| w \|_{L^2}
	\end{equation}
\end{proposition}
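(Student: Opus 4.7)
The three estimates are of different character and I would treat them separately, with the bulk of the effort going into \eqref{eq:maindet1}.

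\textbf{For \eqref{eq:maindet1} (good region).} The strategy is a full dyadic decomposition in the scale parameters $(N_i,N_{\ast i},L_i,M_i)_{i=1,2,3}$ from Subsection~\ref{subsec:not}. On each dyadic tile $S_\iota\cap S_{\rm good}$ the integrand is essentially constant in the weights, and the integral is controlled via Cauchy--Schwarz combined with an estimate on the measure of the resonant slices. In the high--low regime, where $N_{\min}\ll N$, the non-trivial portion of $\langle\xi\rangle\langle(\xi,n_2)\rangle^s/\langle(\nu,k_2)\rangle^s\langle(\zeta,m_2)\rangle^s$ lives on the low frequency, and the classical Molinet--Pilod bilinear Strichartz estimates (cf.~\cite{molinet2015,linares2009}) for the ZK propagator provide the necessary $N^{-\eps}$ gain. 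The hard case is $N_1\approx N_2\approx N_3\approx N$, where cancellations in $\Delta$ are essential.

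For the comparable-frequency regime I would exploit the identities $\p_\nu\Delta=\theta_1^2-\theta_3^2$ and its analogues. On each dyadic $M$-block where $|\theta_i-\theta_j|\approx M_k$, holding four of the six spatial coordinates fixed confines the remaining two to a set of measure $\les L_{\max}/(N M_i)$. Combining three such slice bounds (and using $L_{\max}\les L_{\med}+N^3$) turns a counting estimate into a gain of roughly $N^{-1}(M_1M_2M_3)^{-1/2}L_{\med}^{1/2}$, which is enough to close \eqref{eq:maindet1} at $s>1/2$ \emph{once we stay outside $S_{\rm bad}$}. The key auxiliary input is the rigidity lemma announced in the introduction: if $|\Delta|\les N^2 M$ and all $|\theta_i-\theta_j|\les M$ with $M\ge M_{i,\min}$, then the three frequency vectors lie within $O(M)$ of the triangle $(0,p)$, $(-p/2,-p/2)$, $(p/2,-p/2)$. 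This localization makes the slice counts sharp and, at $M=1$, is precisely what cuts out $S_{\rm bad}$ and $S_{\rm good}$.

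\textbf{For \eqref{eq:maindet2} and \eqref{eq:maindet3} (bad region).} After reordering we may assume $m_2=n_2=-k_2/2$, $|\nu|\le 1/10$, $|\zeta+k_2/2|,|\xi-k_2/2|\le 1/10$, and $|\tilde\mu|,|\tilde\eta|,|\tilde\tau|\les |k_2|^3$. A direct computation gives $\langle(\nu,k_2)\rangle\approx\langle(\zeta,m_2)\rangle\approx\langle(\xi,n_2)\rangle\approx\langle k_2\rangle$ and $\langle\xi\rangle\approx\langle k_2\rangle$, so the frequency weight collapses to $\langle k_2\rangle^{1-s}$. I would apply Cauchy--Schwarz in the time frequencies $\mu,\eta,\tau$ (using $|\tilde\mu|,|\tilde\eta|,|\tilde\tau|\les |k_2|^3$ to evaluate the three weights $\langle\cdot\rangle^{-1/2\pm}$) and then Cauchy--Schwarz in the remaining compact spatial variables $\nu$, $\zeta$, $\xi$ (whose ranges are $O(1)$), and finally sum over $k_2\in\mathbb Z$. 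The $L^2$ norms of $u,v,w$ absorb the three integrations. In \eqref{eq:maindet3} the surplus left over is $\langle k_2\rangle^{2(1-s)+}$, summable exactly when $s>3/4$. In \eqref{eq:maindet2} the extra weight $\bigl(|\nu||\zeta||\xi|/\langle\nu\rangle\langle\zeta\rangle\langle\xi\rangle\bigr)^{1/2}$ simplifies to $|\nu|^{1/2}$ in $S_{\rm bad}$ (since $|\zeta|,|\xi|\approx\langle\zeta\rangle,\langle\xi\rangle\approx|k_2|$, while $|\nu|\ll\langle\nu\rangle\approx 1$). Integrating $|\nu|$ against this weight produces the additional $\langle k_2\rangle^{-1/2}$ needed to push convergence down to $s>1/2$.

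\textbf{Main obstacle.} The delicate step is the comparable-frequency case of \eqref{eq:maindet1} at the boundary of $S_{\rm bad}$: when the $M_i$'s reach $M_{i,\min}\approx N_{\ast i}^{1/2}/N$, not only $\Delta$ but also both of its relevant first derivatives vanish, so the slice bound degenerates and one must use the rigidity lemma to localize the frequencies to an $O(1)$ neighborhood of the triangle. Ensuring that this endpoint contribution is precisely the $S_{\rm bad}$ piece handled by \eqref{eq:maindet2}--\eqref{eq:maindet3}, and that the logarithmic losses from summing over the dyadic parameters are all polynomial in $N$, is where the bookkeeping is concentrated.
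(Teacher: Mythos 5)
Your plan identifies the right pivot points -- the high/low split handled by Molinet--Pilod, a decomposition adapted to $|\theta_i-\theta_j|$ and $|\Delta|$ in the comparable-frequency regime, the triangle rigidity lemma, and an explicit computation of $\Delta$ on $S_{\rm bad}$ -- and for \eqref{eq:maindet2}--\eqref{eq:maindet3} the ordering of the Cauchy--Schwarz applications (time frequencies, then the $O(1)$-size spatial variables, then $k_2$) matches the paper. Structurally you are in the right place.

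There are, however, two substantive gaps. First, for the comparable-frequency case of \eqref{eq:maindet1} your "combine three slice bounds" step with the claimed gain $N^{-1}(M_1M_2M_3)^{-1/2}L_{\rm med}^{1/2}$ is not the mechanism that works. The paper proves a pair of \emph{localized bilinear Strichartz estimates} (Proposition~\ref{prop:refined}): on frequency squares of side $cM$, one gets either $\|P_1Q_{L_1}u\,P_2Q_{L_2}v\|_{L^2}\les M\min\{L_1,L_2\}^{1/2}\|\cdot\|\|\cdot\|$ (trivial) or the refined version $\les N^{-1/2}L_1^{1/2}L_2^{1/2}\|\cdot\|\|\cdot\|$ when $|\theta_1-\theta_2|\ge M$ somewhere in the pair of squares. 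The argument is then a \emph{dichotomy}: either $|\theta_i-\theta_j|\ge M$ for some pair and the refined estimate applies, or all $|\theta_i-\theta_j|<2M$ and $\min\{|\nu|,|\zeta|,|\xi|\}\gtrsim M$, in which case Lemma~\ref{cor:coro} forces $|\Delta|\gtrsim N^2M$, hence $L_{\max}\gtrsim N^2M$, and the trivial bilinear estimate closes. The rigidity lemma is used not to "localize slice counts" but to cover $S_M$ by $O(N/M)$ squares of side $cM$ with bounded overlap, so the bilinear estimates can be summed almost orthogonally. A slice-counting argument of the type you describe runs into the problem that away from $S_{\rm bad}$ you do not control all three of $M_1,M_2,M_3$ simultaneously; only one of the two alternatives (large $\Delta$, or large $|\theta_i-\theta_j|$) is available at each scale, and that is exactly what the box-bilinear dichotomy exploits.

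Second, in the bad region the thresholds do not come from summability of a power of $\langle k_2\rangle$. After Cauchy--Schwarz in $k_2$ you get $\|u\|\|v\|\|w\|$ outright, with no residual sum. What produces $s>3/4$ in \eqref{eq:maindet3} is that on $S_{\rm bad}$ one has $\langle\Delta\rangle\approx\langle k\,\nu(\omega+\nu/2)\rangle$, so the set of $(\nu,\omega)$ with $|\Delta|\les L_{\max}$ has measure $\les (L_{\max}/N)^{1/2}$ in $\nu$ for each $\omega$; Cauchy--Schwarz gives a gain of $(L_{\max}/N)^{1/4}$, which against the weight $N^{1-s}$ and the $L$-denominators yields $N^{3/4-s}$ -- and this must be $\leq 1$, forcing $s\ge 3/4$. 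For \eqref{eq:maindet2}, the singular-weight factor $|\nu|^{1/2}$ is not simply "integrated against"; rather it is bounded by $(L_{\max}/(N\Gamma))^{1/2}$ with $\Gamma\approx|\omega+\nu/2|$, and a dyadic decomposition in $\Gamma$ together with the measure bound $|\{\omega:(\nu,\omega)\in A_\Gamma\}|\les\Gamma$ produces the extra $N^{-1/2}$ that lowers the threshold to $s>1/2$. So your final claim "$\langle k_2\rangle^{2(1-s)+}$, summable exactly when $s>3/4$" is quantitatively off, and the mechanism is different from the one you describe.
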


The advantage of introducing $Y^{s, b}$ spaces can be seen from the statement of this proposition. Note that in $X^{s, b}$ spaces we need the estimates \eqref{eq:maindet1} and \eqref{eq:maindet3} (which are actually the same estimate, but in regions $S_{\rm{good}}$ and $S_{\rm{bad}}$). Thus, the only obstruction to obtain $X^{1/2+, b}$ local wellposedness is the region $S_{\rm{bad}}$. The introduction of $Y^{s, b}$ spaces, motivated by the standard average removal procedure of the periodic case, gives the extra factor $\frac{| \xi |^{1/2} |\nu|^{1/2} | \zeta |^{1/2} }{\langle \xi \rangle^{1/2} \langle \zeta \rangle^{1/2} \langle \nu \rangle^{1/2} }$, and since one of the frequencies $\nu, \zeta, \xi$ will be close to $0$ in $S_{\rm{bad}}$ (see \eqref{eq:defSbad}), this will give a gain and allow to conclude the estimate for $s > \frac12$. 

\subsection{Reduction to the case of comparable frequencies} \label{subsec:det2}

We start the proof of Proposition \ref{prop:maindet}, where the main part will be showing \eqref{eq:maindet1}. First of all, we formulate a version of estimate \eqref{eq:maindet1} projected to dyadic frequencies and we show that such estimate suffices to conclude \eqref{eq:maindet1}. Concretely, let us show that \eqref{eq:maindet1} is implied by the estimate
\begin{align} \begin{split} \label{eq:maindet11}
&\frac{N_3^{1+s}N_1^{-s} N_2^{-s}}{ L_1^{\frac{1+\delta}{2} } L_2^{\frac{1+\delta}{2} } L_3^{ \frac12 - 3\delta}} \int_{S_\mathrm{good}} \left| \widehat{P_{N_1} Q_{L_1} u} (\nu, k, \mu) \widehat{P_{N_2} Q_{L_2} v} (\zeta, m, \eta)  \widehat{P_{N_3} Q_{L_3} w} (\xi, m, \tau) \right| d\sigma \\
&\quad \les_{s, \delta, \eps} N_{\rm{min}}^{-(s-1/2) + \eps}  \| u \|_{L^2} \| v \|_{L^2} \| w \|_{L^2}, 
\end{split} \end{align}
uniformly on the dyadic numbers $N_i, L_i$, for all $s > 1/2$ and for $\delta, \eps > 0$ sufficiently small (depending on $s$). We will generally omit $s$, $\delta$, $\eps$ on $\les_{s, \delta, \eps}$ and it should be assumed that implicit constants are allowed to depend on those (but not on anything else). Morevoer, the order of determining those parameters is $\delta \ll \eps \ll s-1/2$.

Let us show how to deduce \eqref{eq:maindet1} from \eqref{eq:maindet11}. Let us call $\mathcal I_{N_i, L_i}$ to the left hand side of \eqref{eq:maindet11}. Clearly, \eqref{eq:maindet11} implies that 
\begin{equation} \label{eq:corrubedo}
\mathcal I_{N_i, L_i} \les  N_{\rm{min}}^{-\eps} \| P_{N_1} Q_{L_1 }u \|_{L^2} \| P_{N_2} Q_{L_2} v \|_{L^2} \|P_{N_3} Q_{L_3} w \|_{L^2},
\end{equation}
since substituting $u$ by $P_{N_1}Q_{L_1} u $ (and analogously $v$ and $w$) does not change $\mathcal I_{N_i, L_i}$.

Decomposing the integral in \eqref{eq:maindet1} over dyadic projections, and using \eqref{eq:corrubedo}, we obtain that
\begin{align}
 \mathcal I &:= \left| \int_{S_\mathrm{good}} \frac{\langle \xi \rangle \cdot \langle (\xi, n) \rangle^s}{\langle (\nu, k) \rangle^s \cdot \langle (\zeta, m) \rangle^s} \cdot \frac{ \hat u (\nu, k, \mu) \hat v (\zeta, m, \eta) \hat w (\xi, m, \tau)  d\sigma }{\langle \mu - \phi(\nu, k) \rangle^{\frac12 + \delta} \cdot \langle \eta - \phi(\zeta, m) \rangle^{\frac12 + \delta}  \cdot \langle \xi - \phi(\xi, n) \rangle^{\frac12 - 2\delta} } \right| \nonumber \\
 &\les \sum_{N_i, L_i}  \frac{1}{ L_1^{\frac{\delta}{2}} L_2^{\frac{\delta}{2}} L_3^{\delta} }\mathcal I_{N_i, L_i}
 \les \sum_{N_i}  N_{\rm{min}}^{-\eps} \sum_{L_1} \frac{\| P_{N_1} Q_{L_1 }u \|_{L^2}}{L_1^{\frac{\delta}{2}} }\sum_{L_2} \frac{\| P_{N_2} Q_{L_2} v \|_{L^2}}{ L_3^{\frac{\delta}{2}}} \sum_{L_3} \frac{\|P_{N_3} Q_{L_3} w \|_{L^2}}{L_3^{\delta}} \nonumber \\
 &\les \sum_{N_i} N_{\rm{min}}^{-\eps} \| P_{N_1} u \|_{L^2}  \| P_{N_2} v \|_{L^2}  \| P_{N_3} w \|_{L^2}. \label{eq:corrubedo2}
\end{align}

In the last line we used Cauchy-Schwarz together with the fact that $\sum_{L_i} \frac{1}{L_i^{\delta/2}} \les 1$.

Now, since the sum in \eqref{eq:corrubedo2} is symmetric with respect to $u$, $v$, $w$, it suffices to bound the sum when $N_1\leq N_2 \leq N_3$. From $(\xi, n) = -(\zeta, m) - (\nu, k),$ we obtain $N_3 \in \{ N_2, 2N_2, 4N_2 \}$. Using Cauchy-Schwarz, we have
\begin{align*}
\mathcal I &\les \sum_{N_1} \frac{\| P_{N_1} u \|_{L^2} }{N_1^{\eps}} \sum_{N_2} \| P_{N_2} v \|_{L^2} \left( \| P_{N_2} w \|_{L^2} + \| P_{2N_2} w \|_{L^2} + \| P_{4N_2} w \|_{L^2} \right) \\ 
&\les \| u \|_{L^2}  \left( \sum_{N_2} \| P_{N_2} v \|_{L^2}^2 \right)^{1/2} \left( \sum_{N_2}  \left( \| P_{N_2} w \|_{L^2} + \| P_{2N_2} w \|_{L^2} + \| P_{4N_2} w \|_{L^2} \right)^2  \right)^{1/2} \\ 
&\leq \| u \|_{L^2} \| v\|_{L^2}\left( 3\sum_{N_2}\left( \| P_{N_2} w \|_{L^2}^2 + \| P_{2N_2} w \|_{L^2}^2 + \| P_{4N_2} w \|_{L^2}^2 \right) \right)^{1/2} \les \| u \|_{L^2} \| v\|_{L^2} \| w \|_{L^2}.
\end{align*}
Thus, we have concluded the proof of \eqref{eq:maindet1} from \eqref{eq:maindet11} for all the dyadic numbers $N_i, L_i$. From now on, we focus on showing \eqref{eq:maindet11}. We consider the following cases: \begin{enumerate}
\item \label{case:low} All $N_1, N_2, N_3 \leq 128$.
\item \label{case:Llarge} $N_{\rm{max}} \geq 256$ and $L_{\rm{max}} \geq 64 N_{\rm{max}}^3$.
\item \label{case:Ndif} $N_{\rm{max}} \geq 256$, $L_{\rm{max}} \leq 32 N_{\rm{max}}^3$ and $N_{\rm{max}} \geq 8N_{\rm{min}}$.
\item \label{case:Neq} All $N_i$ are between $N_{\rm{min}}$ and $4N_{\rm{min}}$. Moreover, $N_{\rm{min}} \geq 64$ and $L_{\rm{max}} \leq 2048 N_{\rm{min}}^3$.
\end{enumerate}

It is clear that those cases cover all the possibilities. The first case is trivial and follow from a standard analysis, common to any dispersive equation. The second and third cases follows from the techniques of Molinet-Pilod \cite{molinet2015}. The more challenging will be the last case, since that it contains the resonances, we will need to introduce new techniques to conclude our bounds. Let us start showing \eqref{eq:maindet11} for the three first cases.

\begin{remark} \label{rem:freq_loc} In the reduction from \eqref{eq:maindet1} to \eqref{eq:maindet11} we have not used the properties of $S_{\rm{good}}$ at any moment. Thus, this reduction is also valid for \eqref{eq:maindet2} and \eqref{eq:maindet3}. That is, for \eqref{eq:maindet2} it suffices to show
\begin{align}\label{eq:maindet21}
& \frac{N_3^s N_1^{-s} N_2^{-s} }{ L_3^{\frac12 - 3\delta}L_1^{\frac12 + \delta/2} L_2^{\frac12 + \delta/2}} \int_{S_\mathrm{bad}} \left| \frac{\langle \xi \rangle^{1/2} |\nu|^{1/2} | \xi |^{1/2} | \zeta |^{1/2} }{ \langle \zeta \rangle^{1/2} \langle \nu \rangle^{1/2} }  \widehat{ P_{N_1} Q_{L_1} u }(\nu, k, \mu) \widehat{ P_{N_2} Q_{L_2} v} (\zeta, m, \eta) \widehat{ P_{N_3} Q_{L_3} w}(\xi, n, \tau) d\sigma \right|  \\
 \qquad &\les N_{\rm{min}}^{-(s-1/2) + \eps} \| u  \|_{L^2} \| u \|_{L^2} \| w \|_{L^2}, \notag
\end{align}
for $s > 1/2$ and $\delta > 0$ sufficiently small. Similarly, instead of showing \eqref{eq:maindet3}, it suffices to show
\begin{align} \begin{split} \label{eq:maindet31}
 \frac{N_3^s N_1^{-s} N_2^{-s} }{ L_3^{\frac12 - 3\delta}L_1^{\frac12 + \delta/2} L_2^{\frac12 + \delta/2}} & \int_{S_\mathrm{bad}} \left|   \langle \xi  \rangle   \widehat{ P_{N_1} Q_{L_1} u }(\nu, k, \mu) \widehat{ P_{N_2} Q_{L_2} v} (\zeta, m, \eta) \widehat{ P_{N_3} Q_{L_3} w}(\xi, n, \tau)   d\sigma \right| \\
 & \les N_{\rm{min}}^{-(s-1/2) + \eps} \| u  \|_{L^2} \| v \|_{L^2} \| w \|_{L^2},
\end{split} \end{align}
for all $s > 3/4$ and $\delta > 0$ sufficiently small. In both cases, the bounds should be uniform in $N_i, L_i$.
\end{remark}

\subsubsection{Case \ref{case:low}}
We want to show \eqref{eq:maindet11} for $1\leq N_1, N_2, N_3 \leq 128$. That is, we want to show
\begin{equation} \label{eq:obj_ss1}
\int_{S_\mathrm{good}}  \frac{ \left|  \widehat{P_{N_1} Q_{L_1} u}(\nu, k, \mu) \widehat{P_{N_2} Q_{L_2} v} (\zeta, m, \eta) \widehat{ P_{N_3} Q_{L_3}  w}(\xi, n, \tau)  \right| d\sigma }{L_3^{\frac{1}{2} - 2\delta} L_2^{\frac12 + \delta} L_1^{\frac12 + \delta} }  \les  \| u  \|_{L^2} \| v \|_{L^2} \| w \|_{L^2}.
\end{equation}
First of all, note that this estimate is trilinear, so we can assume without loss of generality that $\| u \|_{L^2} = \| v \|_{L^2} = \| w \|_{L^2} = 1$. The geometric-arithmetic mean inequality (applied to $\sqrt{|ab|}$, $\sqrt{|bc|}$ and $\sqrt{|ac|}$) tells us that $|a b c| \les |ab|^{3/2} + |ac|^{3/2} + |bc|^{3/2}$. Using that, we get that
\begin{align}
& \int_{S_{\rm{good}}} \frac{  \left| \widehat{P_{N_1} Q_{L_1} u}(\nu, k, \mu) \widehat{P_{N_2} Q_{L_2} v} (\zeta, m, \eta) \widehat{ P_{N_3} Q_{L_3}  w}(\xi, n, \tau) 
 \right|   d\sigma }{ L_1^{\frac{1+\delta}{2}}  L_2^{\frac{1+\delta}{2}}L_3^{ \frac12 - 3\delta } }\label{eq:morcef1} \\
&\les \int_{S_{\rm{good}}} \Bigg( 
\frac{ |\widehat{P_{N_1} Q_{L_1} u}(\nu, k, \mu)|^{3/2} |\widehat{P_{N_2} Q_{L_2} v} (\zeta, m, \eta) |^{3/2} }{ L_1^{\frac34 (1+\delta)} L_2^{\frac34 (1+\delta)}  }
+  \frac{ |\widehat{P_{N_1} Q_{L_1} u}(\nu, k, \mu)|^{3/2} | \widehat{ P_{N_3} Q_{L_3}  w}(\xi, n, \tau)|^{3/2}  }{L_1^{\frac34 (1+\delta)} L_3^{\frac34 - \frac92 \delta} } \nonumber \\
&\qquad \qquad + \frac{ | \widehat{ P_{N_3} Q_{L_3}  w}(\xi, n, \tau)|^{3/2}  |\widehat{P_{N_2} Q_{L_2} v} (\zeta, m, \eta) |^{3/2}  }{ L_2^{\frac34 (1+\delta)} L_3^{\frac34 - \frac92 \delta} } 
\Bigg) d\sigma \nonumber \\
&\les A_u A_v + A_u A_w + A_v A_w, \nonumber
\end{align}
where
\begin{equation*}
A_u = \int_{-256}^{256} \sum_{k = -256}^{256}  \int_{\mathbb R} \frac{\mathbbm{1}_{|\mu - \phi(\nu, k)| < 2L_1}}{L_1^{\frac34 - \frac92 \delta}}|\hat{u}(\nu, k, \mu)|^{3/2} d\mu d\nu ,
\end{equation*}
together with the analogous definitions for $A_v, A_w$. Here we are using that $N_{i} \leq 128$ (and thus any frequency is smaller than $256$). We also used that $$|d\sigma |  = | d\nu d\zeta dk dm d\mu d\eta | = | d\nu d\xi dk dn d\mu d\tau | = | d\zeta d\xi dm dn d\eta d\tau |. $$

Finally, we have that
\begin{equation*}
    A_u \leq \int_{-256}^{256} \sum_{k=-256}^{256} \left( \int_{\mathbb R} | \hat u (\nu, k, \mu)|^2 d\mu \right)^{3/4} \frac{(4L_1)^{\frac14}}{L_1^{\frac34 - \frac92 \delta}}\les \| u \|_{L^2}^{3/2} = 1,
\end{equation*}
and plugging this into \eqref{eq:morcef1} we conclude the proof of \eqref{eq:obj_ss1}.

\subsubsection{Case \ref{case:Llarge}}

Before treating this case, let us state the following bilinear estimates from \cite{molinet2015}.

\begin{proposition}[Molinet - Pilod, \cite{molinet2015}] \label{prop:molinet} Let $N_1, N_2, L_1, L_2$ be dyadic numbers. We have
\begin{equation} \label{eq:molinet1}
\|  P_{N_1} Q_{L_1} u P_{N_2} Q_{L_2} v \|_{L^2} \les \min \{ N_1, N_2 \} \cdot \min \{ L_1^{1/2}, L_2^{1/2} \} \cdot \|  P_{N_1}Q_{L_1}u \|_{L^2} \|  P_{N_2} Q_{L_2} v \|_{L^2}.
\end{equation}

 Let us assume $N_1 \geq 4N_2$. Then, we have that 
\begin{equation} \label{eq:molinet2}
\|  P_{N_1} Q_{L_1} u  P_{N_2} Q_{L_2} v \|_{L^2} \les \frac{N_2^{1/2}}{N_1} L_1^{1/2} L_2^{1/2} \|  P_{N_1}Q_{L_1}u \|_{L^2} \| P_{N_2} Q_{L_2} v \|_{L^2}.
\end{equation}
\end{proposition}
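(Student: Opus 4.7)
The plan is to derive both inequalities from the same $L^2$-Cauchy--Schwarz principle. By Plancherel we have $\|fg\|_{L^2} = \|\hat f \ast \hat g\|_{L^2}$, and for any fixed output frequency $(\xi, n, \tau)$,
\[
|\widehat{fg}(\xi,n,\tau)|^2 \leq |A_{\xi,n,\tau}| \int |\hat f(\nu,k,\mu)|^2 \, |\hat g(\xi-\nu, n-k, \tau-\mu)|^2 \, d\mu\, d\nu\, dk,
\]
where $dk$ is counting measure on $\mathbb{Z}$ and $A_{\xi,n,\tau}\subset\mathbb{R}\times\mathbb{Z}\times\mathbb{R}$ is the set of $(\nu,k,\mu)$ compatible with the dyadic restrictions encoded by $P_{N_1}Q_{L_1} f$ and $P_{N_2}Q_{L_2} g$. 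By Fubini, to establish $\|P_{N_1}Q_{L_1} f \cdot P_{N_2}Q_{L_2} g\|_{L^2}\les M^{1/2} \|f\|_{L^2}\|g\|_{L^2}$ it will suffice to prove a uniform bound $\sup_{\xi,n,\tau} |A_{\xi,n,\tau}|\leq M$; the two inequalities will correspond to two different ways of estimating this measure.

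In both arguments I integrate the $\mu$ variable first. Since $\mu$ is simultaneously restricted by $|\mu - \phi(\nu,k)|\sim L_1$ and $|\tau - \mu - \phi(\xi-\nu, n-k)|\sim L_2$, it lies in an interval of length $\les \min(L_1, L_2)$, while summing the two restrictions yields the combined constraint
\[
|\tau - \phi(\nu,k) - \phi(\xi-\nu, n-k)| \les \max(L_1, L_2)
\]
on the remaining $(\nu, k)$ variables. For \eqref{eq:molinet1} I simply discard this combined constraint and bound the area of the admissible $(\nu, k)$-set: $(\nu, k)$ must lie simultaneously in a dyadic annulus of radius $\sim N_1$ around $0$ and in a dyadic annulus of radius $\sim N_2$ around $(\xi, n)$, whose intersection has area at most $\min(N_1, N_2)^2$. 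This yields $|A_{\xi,n,\tau}| \les \min(L_1, L_2)\min(N_1, N_2)^2$, hence \eqref{eq:molinet1}.

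For \eqref{eq:molinet2} I must use the dispersion through the combined constraint. Under $N_1 \geq 4 N_2$ I compute
\[
\partial_\nu \bigl(\phi(\nu, k) + \phi(\xi-\nu, n-k)\bigr) = (3\nu^2 + k^2) - \bigl(3(\xi-\nu)^2 + (n-k)^2\bigr),
\]
and since the first bracket has size $\sim N_1^2$ while the second is $O(N_2^2) \ll N_1^2$, this derivative has size $\sim N_1^2$ throughout the relevant region. Consequently, for each fixed integer $k$, the set of admissible $\nu$ has one-dimensional Lebesgue measure at most $\les \max(L_1, L_2)/N_1^2$. There are only $O(N_2)$ integer values of $k$ compatible with $|n - k| \les N_2$, so
\[
|A_{\xi,n,\tau}| \les \min(L_1, L_2) \cdot \frac{\max(L_1, L_2)}{N_1^2} \cdot N_2 = \frac{N_2 \, L_1 L_2}{N_1^2},
\]
which gives \eqref{eq:molinet2}. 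The only structural input is the first-order transversality of $\phi$ in the high--low regime; the only minor subtlety to be careful about is that the second Fourier variable lives in $\mathbb{Z}$, which is what produces the factor $N_2$ (rather than $N_2^2$, as in the purely Euclidean setting) in the $k$-summation.
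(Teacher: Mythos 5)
Your proposal is correct and is essentially the argument the paper has in mind: the paper explicitly declines to prove Proposition~\ref{prop:molinet} directly but remarks that it follows as a particular case of the proof of Proposition~\ref{prop:refined}, and your Cauchy--Schwarz reduction to bounding $\sup_{\xi,n,\tau}|A_{\xi,n,\tau}|$, followed by integrating $\mu$ over a $\min(L_1,L_2)$-length set and (for the second estimate) using the lower bound $\bigl|\partial_\nu\bigl(\phi(\nu,k)+\phi(\xi-\nu,n-k)\bigr)\bigr|=\bigl|(3\nu^2+k^2)-(3(\xi-\nu)^2+(n-k)^2)\bigr|\gtrsim N_1^2$ in the high--low regime, is exactly the scheme used there. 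The only thing you leave implicit, which is worth being explicit about and which the paper does address, is that the derivative $\partial_\nu\bigl(\phi(\nu,k)+\phi(\xi-\nu,n-k)\bigr)$ has a single sign on the relevant region when $N_1\geq 4N_2$ (it is bounded below by $N_1^2-12N_2^2>0$), so the sublevel set in $\nu$ really is a single interval of length $\les\max(L_1,L_2)/N_1^2$ and no further splitting into monotone pieces is needed.
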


We want to show \eqref{eq:maindet11} for the case $N_{\rm{max}} \geq 256$ and $L_{\rm{max}}\geq 64N_{\rm{max}}^3$. That is, we want to show
\begin{align} \begin{split} \label{eq:manzanas}
\Xi_{N_i, L_i} &:= \int_{S_{\rm{good}}} \frac{  N_3^{1+s}}{N_1^s N_2^s} \cdot \frac{ \left| \widehat{P_{N_1} Q_{L_1} u}(\nu, k, \mu) \widehat{P_{N_2} Q_{L_2} v} (\zeta, m, \eta) \widehat{P_{N_3} Q_{L_3} w} (\xi, n, \tau)  \right|  d\sigma }{L_1^{\frac12 (1+\delta)}L_2^{\frac12 (1+\delta)} L_3^{\frac12 - 3\delta} }  \\
&\les N_{\rm{min}}^{-(s-1/2) + \eps} \| u  \|_{L^2} \| v \|_{L^2} \| w \|_{L^2}. 
\end{split} \end{align}

Recall $L_{\rm{max}} \geq 64 N_{\rm{max}}^3$. Observe that
\begin{align} \label{eq:pera0} \begin{split}
N_{\rm{max}} &< 2N_{\rm{min}} + 2N_{\rm{med}} \leq 4 N_{\rm{med}}\\
L_{\rm{max}} &\leq |\mu | + | \phi (\nu, k) | \leq | \tau - \phi(\xi, n) | + |\phi(\xi, n)| + | \eta - \phi(\zeta, m) | + |\phi(\zeta, m) |  + | \phi(\nu, k) | \\
&\leq 2L_{\rm{med}} + 2L_{\rm{min}} + (2N_{\rm{max}})^3 + (2N_{\rm{med}})^3+ (2N_{\rm{min}})^3 \\
&\leq 4 L_{\rm{med}} + 24 N_{\rm{max}}^3,
\end{split} \end{align}
where in the second line we assumed $L_{\rm{max}} = | \mu - \phi(\nu, k) |$ but the inequality would follow symmetrically for any other case. With the second inequality and the assumption $L_{\rm{max}} \geq 64N_{\rm{max}}^3$, we have $\left(1 - \frac{24}{64}\right) L_{\rm{max}} \leq 4L_{\rm{med}}$, and therefore: 
\begin{equation} \label{eq:peru1} L_{\rm{max}} < 8L_{\rm{med}}. \end{equation}

We can reexpress \eqref{eq:manzanas} in physical space as:
\begin{align} 
\Xi_{N_i, L_i} &=  \frac{   N_3^{1+s}}{N_1^s N_2^s L_1^{\frac12 (1+\delta)}L_2^{\frac12 (1+\delta)} L_3^{\frac12 - 3\delta} }  \int_{\mathbb R} \int_{\mathbb R \times \mathbb T}   P_{N_1} Q_{L_1} \tilde u (x, t)  P_{N_2} Q_{L_2} \tilde v (x, t)    P_{N_3} Q_{L_3} \tilde w(x, t)  dx dt, \label{eq:fresas1}
\end{align} 
where we use $\tilde u$ to denote $\mathcal F^{-1} \left| \hat u\right|$. We will apply Cauchy-Schwarz to the integral in \eqref{eq:fresas1} and then use the bilinear estimate \eqref{eq:molinet1}. We can choose which pair of functions from $\tilde u$, $\tilde v$, $\tilde w$ is estimated with the bilinear estimate, so we take the one corresponding to $P_{N_{\rm{min}}}$ and the one that corresponds to $Q_{L_{\rm{min}}}$ (if they are the same, pick any other second function). We obtain
\begin{align*}
\Xi_{N_i, L_i} &\les \frac{  N_{\rm{max}}^{1+s}}{N_{\rm{med}}^s N_{\rm{min}}^s L_{\rm{min}}^{\frac12 (1+\delta)}L_{\rm{med}}^{\frac12 (1+\delta)} L_{\rm{max}}^{\frac12 - 3\delta} } \left( N_{\rm{min}} L_{\rm{min}}^{1/2} \right) \|    P_{N_1} Q_{L_1} \tilde u \|_{L^2} \|  P_{N_2} Q_{L_2} \tilde v \|_{L^2} \|  P_{N_3} Q_{L_3} \tilde w \|_{L^2}  \\
&\les \frac{  N_{\rm{max}} N_{\rm{min}}^{1-s} }{L_{\rm{max}}^{1 - 3\delta}  }  \|    \tilde u \|_{L^2} \|  \tilde v \|_{L^2} \|   \tilde w \|_{L^2}  \leq \frac{N_{\rm{max}}^{3/2}}{N_{\rm{min}}^{s-1/2} N_{\rm{max}}^{3-9\delta}} \|  u \|_{L^2} \| v \|_{L^2} \|  w \|_{L^2} ,
\end{align*}
where in the second inequality we used $N_{\rm{med}} \gtrsim N_{\rm{max}}$ from \eqref{eq:pera0} and $L_{\rm{med}} \gtrsim L_{\rm{max}}$ from \eqref{eq:peru1}. In the third inequality, we used the assumption $L_{\rm{max}} \geq 64N_{\rm{max}}^3$ and the fact that $\| \tilde u \|_{L_2} = \| | \hat u | \|_{L^2} = \| u \|_{L^2}$. Since $s > 1/2$ and $\delta$ is sufficiently small, we conclude \eqref{eq:manzanas}. 

\subsubsection{Case \ref{case:Ndif}}

We write the integral from \eqref{eq:maindet11} in physical space as:
\begin{align}
\Xi_{N_i, L_i} &:= \frac{N_3^{1+s}N_1^{-s} N_2^{-s}}{ L_1^{\frac{1+\delta}{2} } L_2^{\frac{1+\delta}{2} } L_3^{ \frac12 - 3\delta}} \int_{S_\mathrm{good}} \left| \widehat{P_{N_1} Q_{L_1} u} (\nu, k, \mu) \widehat{P_{N_2} Q_{L_2} v} (\zeta, m, \eta)  \widehat{P_{N_3} Q_{L_3} w} (\xi, m, \tau) \right| d\sigma \nonumber \\
&\leq \frac{N_3^{1+s}N_1^{-s} N_2^{-s}}{ L_1^{\frac{1+\delta}{2} } L_2^{\frac{1+\delta}{2} } L_3^{ \frac12 - 3\delta}} \int_{\mathbb R} \int_{\mathbb R \times \mathbb T}   P_{N_1} Q_{L_1} \tilde u (x, t)  P_{N_2} Q_{L_2} \tilde v (x, t)    P_{N_3} Q_{L_3} \tilde w(x, t)  dx dt. \label{eq:arcadia}
\end{align}

Recall that $N_{\rm{max}} \geq N_{\rm{min}}$, $N_{\rm{max}} \geq 256$ and $L_{\rm{max}} \leq 32 N_{\rm{max}}^3$ in this case. We consider two subcases: \begin{itemize}
\item Subcase A. Here, $\frac{N_1}{N_2} \in \{ 1/2, 1, 2 \}$. Given that $N_{\rm{max}} \geq 8N_{\rm{min}}$, we deduce that either $N_3 = N_{\rm{max}}$ or $N_3 = N_{\rm{min}}$. However, we know that $N_3 \leq 2N_1 + 2N_2$ (from $(\xi,n) = -(\nu, k) - (\zeta, m)$), and using that $N_1/N_2 \in \{1/2, 1, 2\}$, we deduce $N_3 \leq 6 \min \{ N_1, N_2 \}$. Thus, it cannot be that $N_3 = N_{\rm{max}}$, since this would contradict the assumption $N_{\rm{max}} \geq 8 N_{\rm{min}}$. Therefore, in subcase A we have that $N_3 = N_{\rm{min}}$ and $N_1, N_2 \geq \frac12 N_{\rm{max}} \geq 4N_3$.
\item Subcase B. Here, either $N_1 \geq 4N_2$ or $N_2 \geq 4N_1$.
\end{itemize}

\textbf{Subcase A.} \\

Given that $N_1 \geq 4N_3$, we can apply equation \eqref{eq:molinet2} to $\|  P_{N_1}Q_{L_1} \tilde u  P_{N_3} Q_{L_3} \tilde w \|_{L^2}$. Using also that $N_1, N_2 \geq \frac12 N_{\rm{max}}$ and Cauchy-Schwarz, from \eqref{eq:arcadia}, we obtain:
\begin{align*}
\Xi_{N_i, L_i} &\les \frac{N_{\rm{max}}^{1-s}}{L_1^{1/2+\delta/2 } L_2^{1/2+\delta/2} L_3^{1/2 - 3\delta} } \frac{N_{\rm{min}}^{1/2}}{N_{\rm{max}}} L_1^{1/2} L_3^{1/2}
\|  P_{N_1}Q_{L_1} \tilde u \|_{L^2} \| P_{N_2} Q_{L_2} \tilde v \|_{L^2} \|  P_{N_3} Q_{L_3} \tilde w \|_{L^2}  \\
&\les \frac{N_{\rm{max}}^{1/2-s}}{L_1^{\delta/2 } L_2^{1/2+\delta/2} L_3^{- 3\delta} }  \| u \|_{L_2} \| v \|_{L^2} \| w \|_{L^2} \les  \frac{N_{\rm{max}}^\eps}{N_{\rm{max}}^{s-1/2}}\| u \|_{L_2} \| v \|_{L^2} \| w \|_{L^2} 
\end{align*}
In the last inequality, we used the assumption $L_{\rm{max}} \leq 32 N_{\rm{max}}^3$, which implies $L_3^{3\delta} \les N_{\rm{max}}^{9\delta} \les N_{\rm{max}}^{1/2-s}$ (since $s>1/2$ and that $\delta$ is sufficiently small). \\

\textbf{Subcase B.} \\

In this case, $N_1 \geq 4N_2$ or $N_2 \geq 4N_1$, so we are in the hypothesis to apply \eqref{eq:molinet2} to $\|  P_{N_1}Q_{L_1} \tilde u  P_{N_2} Q_{L_2} \tilde v \|_{L^2}$. Using that with Cauchy-Schwarz in \eqref{eq:arcadia}, we get
\begin{align*}
\Xi_{N_i, L_i}  &\les \frac{ N_{\rm{max}}^{1+s}  }{ L_1^{1/2 + \delta/2} L_2^{1/2 + \delta/2} L_3^{1/2 - 3\delta} N_1^s N_2^s  } \frac{\min \{ N_1, N_2\}^{1/2} L_1^{1/2} L_2^{1/2} }{ \max \{ N_1, N_2 \} } \| P_{N_1}Q_{L_1} \tilde u \|_{L^2} \| P_{N_2} Q_{L_2} \tilde  v \|_{L^2} \| P_{N_3} Q_{L_3} \tilde w \|_{L^2} \notag \\ 
&\les \min \{ N_1,  N_2 \}^{1/2 - s}
\|  u \|_{L^2} \| v \|_{L^2} \| w \|_{L^2}  \les 
\frac{1}{N_{\rm{min}}^{s-1/2}}\|  u \|_{L^2} \| v \|_{L^2} \| w \|_{L^2},
\end{align*}
where in the last line we used that $N_{\rm{max}} \les \max \{N_1, N_2 \}$ and $s > 1/2$. 

\subsection{Bilinear estimates and frequency decomposition} \label{subsec:det3}

\subsubsection{New bilinear estimates}
Before showing \eqref{eq:maindet11} for case \ref{case:Neq}, we will introduce some new tools that will be the bulk of the proof. Let us start showing new bilinear estimates, which are a refinement of Proposition \ref{prop:molinet}. Even though we did not give a proof of Proposition \ref{prop:molinet} (which can be found in the original paper \cite{molinet2015}), it is also straightforward to conclude such proof as a particular case of the proof below.

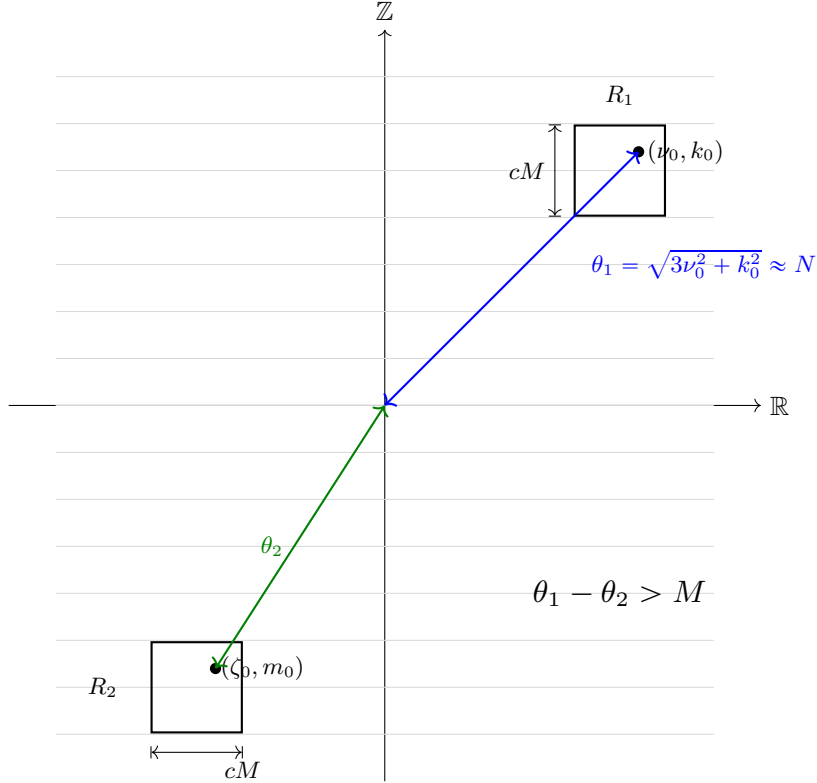
\begin{figure}[htbp]
    \centering
    \begin{tikzpicture}[
    scale=1.25,
    box/.style={
        rectangle,
        draw,
        minimum width=1.2cm,
        minimum height=1.2cm,
        thick
    },
    annotation/.style={
        font=\small
    },
    point/.style={
        circle,
        fill,
        inner sep=1.5pt
    }
]
    \draw[->] (-4,0) -- (4,0) node[right] {$\mathbb{R}$};
    \draw[->] (0,-4) -- (0,4) node[above] {$\mathbb{Z}$};
    
    \foreach \y in {-3.5,-3,-2.5,-2,-1.5,-1,-0.5,0,0.5,1,1.5,2,2.5,3,3.5} {
        \draw[gray!30, thin] (-3.5,\y) -- (3.5,\y);
    }
    
    \node[box] (R1) at (2.5,2.5) {};  
    \node[box] (R2) at (-2,-3) {};    
    
    \node[annotation] at (2.5,3.3) {$R_1$};
    \node[annotation] at (-3,-3) {$R_2$};
    
    \node[point] (p1) at (2.7,2.7) {};
    \node[point] (p2) at (-1.8,-2.8) {};
    
    \node[annotation] at (3.2,2.7) {$(\nu_0,k_0)$};
    \node[annotation] at (-1.3,-2.8) {$(\zeta_0,m_0)$};
    
    \draw[<->, blue, thick] (0,0) -- (2.7,2.7);
    \draw[<->, green!50!black, thick] (0,0) -- (-1.8,-2.8);
    
    \node[annotation, blue, text width=4cm] at (3.8,1.5) 
        {$\theta_1 = \sqrt{3\nu_0^2 + k_0^2} \approx N$};
    \node[annotation, green!50!black] at (-1.2,-1.5) {$\theta_2$};
    
    \node[annotation, scale=1.3] at (2.5,-2) {$\theta_1 - \theta_2 > M$};
    
    \draw[|<->|] ($(R1.north west)-(0.2,0)$) -- ($(R1.south west)-(0.2,0)$) 
        node[midway,left,annotation] {$cM$};
    
    \draw[|<->|] ($(R2.south west)-(0,0.2)$) -- ($(R2.south east)-(0,0.2)$) 
        node[below,annotation] {$cM$};
    
\end{tikzpicture}  
    \caption{Frequency space representation of Proposition \ref{prop:refined}.}
    \label{fig:frequency-space}
\end{figure}

\begin{proposition} \label{prop:refined} Let $M, N, L_1, L_2$ be dyadic numbers with $1 \les M \leq \frac{N}{128}$. Let $R_1$ and $R_2$ be some squares of side smaller than $c M$ (for some small constant $c$) intersected with $\mathbb{R} \times \mathbb Z$. Let $P_1$ and $P_2$ be projections to $R_1$ and $R_2$. Moreover, let us assume that for any $(\nu, k) \in R_i$ we have that $| (\nu, k) |$ is between $\frac{9}{10} N$ and $\frac{10}{9} \cdot 8N$. Then, we have
\begin{equation} \label{eq:refined1}
\| P_1 Q_{L_1} u P_2 Q_{L_2} v \|_{L^2} \les M \min \{  L_1 , L_2 \}^{1/2} \| P_1 Q_{L_1} u \|_{L^2} \| P_2 Q_{L_2} v \|_{L^2}. 
\end{equation}
Let us suppose there exist $(\nu_0, k_0) \in R_1$ and $(\zeta_0, m_0) \in R_2$ such that $\theta_1 = \sqrt{3 \nu_0^2 + k_0^2}$ and $\theta_2 = \sqrt{3 \zeta_0^2 + m_0^2}$ satisfy $|\theta_1 - \theta_2| \geq M$. In that case, we have
\begin{equation} \label{eq:refined2}
\| P_1 Q_{L_1} u P_2 Q_{L_2} v \|_{L^2} \les \frac{1}{N^{1/2} } L_1^{1/2} L_2^{1/2} \| P_1 Q_{L_1} u \|_{L^2} \| P_2 Q_{L_2} v \|_{L^2}. 
\end{equation}
\end{proposition}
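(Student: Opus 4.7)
My approach is via Plancherel followed by a Cauchy--Schwarz trick that reduces both inequalities to uniform measure bounds on a resonance set. Setting $f = \widehat{P_1 Q_{L_1} u}$ and $g = \widehat{P_2 Q_{L_2} v}$, I have $\|P_1 Q_{L_1} u \cdot P_2 Q_{L_2} v\|_{L^2} = \|f * g\|_{L^2}$. Applying Cauchy--Schwarz pointwise,
\begin{equation*}
|(f * g)(\xi, n, \tau)|^2 \leq |E(\xi, n, \tau)| \int |f(\nu, k, \mu)|^2 |g(\xi - \nu, n - k, \tau - \mu)|^2 \, d\nu\, dk\, d\mu,
\end{equation*}
where $E(\xi, n, \tau)$ is the set of $(\nu, k, \mu)$ on which both factors of the integrand are nonzero. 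Integrating in $(\xi, n, \tau)$ and invoking Fubini,
\begin{equation*}
\|f * g\|_{L^2}^2 \leq \Bigl(\sup_{\xi, n, \tau} |E(\xi, n, \tau)|\Bigr) \|f\|_{L^2}^2 \|g\|_{L^2}^2,
\end{equation*}
so both \eqref{eq:refined1} and \eqref{eq:refined2} reduce to uniform measure bounds on $E(\xi, n, \tau)$.

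For \eqref{eq:refined1}, I would bound $|E(\xi, n, \tau)|$ crudely. The spatial constraints $(\nu, k) \in R_1$ and $(\xi - \nu, n - k) \in R_2$ force $(\nu, k)$ into $R_1 \cap ((\xi, n) - R_2)$, a subset of $\mathbb R \times \mathbb Z$ of measure $\lesssim M^2$: since $M \gtrsim 1$, a box of side $cM$ contains $O(M)$ lattice points in the discrete direction and has length $O(M)$ in the continuous direction. Meanwhile, the conditions $\mu - \phi(\nu, k) = O(L_1)$ and $(\tau - \mu) - \phi(\xi - \nu, n - k) = O(L_2)$ place $\mu$ in the intersection of two intervals of lengths $L_1$ and $L_2$, hence in a set of measure $\lesssim \min\{L_1, L_2\}$. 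This gives $|E(\xi, n, \tau)| \lesssim M^2 \min\{L_1, L_2\}$, which yields \eqref{eq:refined1}.

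For \eqref{eq:refined2}, the $\mu$-factor still contributes $\min\{L_1, L_2\}$, but I would improve the spatial bound using the derivative hypothesis. Eliminating $\mu$ between the two modulation constraints yields
\begin{equation*}
\Phi(\nu, k) := \phi(\nu, k) + \phi(\xi - \nu, n - k) = \tau + O(L_1 + L_2),
\end{equation*}
and a direct computation gives
\begin{equation*}
\partial_\nu \Phi(\nu, k) = (3\nu^2 + k^2) - (3(\xi - \nu)^2 + (n - k)^2) = \theta_1^2 - \theta_2^2 = (\theta_1 - \theta_2)(\theta_1 + \theta_2),
\end{equation*}
where $\theta_1, \theta_2$ are now evaluated at $(\nu, k)$ and $(\xi - \nu, n - k)$ respectively. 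The assumption $|\theta_1(\nu_0, k_0) - \theta_2(\zeta_0, m_0)| \geq M$ at one pair of points propagates to $|\theta_1 - \theta_2| \gtrsim M$ throughout $R_1 \times R_2$ because $|\nabla \theta_i| \lesssim 1$ and the boxes have diameter $\lesssim cM$ for $c$ sufficiently small. The magnitude hypothesis gives $\theta_1 + \theta_2 \sim N$, so $|\partial_\nu \Phi| \gtrsim MN$ uniformly on $E$. Hence for each fixed integer $k$ the $\nu$-set with $\Phi(\nu, k) = \tau + O(L_1 + L_2)$ has one-dimensional measure $\lesssim (L_1 + L_2)/(MN)$, and summing over the $O(M)$ admissible values of $k$ produces a total spatial measure $\lesssim (L_1 + L_2)/N$. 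Combining with the $\mu$-factor,
\begin{equation*}
|E(\xi, n, \tau)| \lesssim \frac{(L_1 + L_2) \min\{L_1, L_2\}}{N} \lesssim \frac{L_1 L_2}{N},
\end{equation*}
which is exactly \eqref{eq:refined2}.

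The only delicate point is the propagation of the separation $|\theta_1 - \theta_2| \geq M$ from the single pair $(\nu_0, k_0), (\zeta_0, m_0)$ to all of $R_1 \times R_2$; this is precisely why the statement requires the box side length $\leq cM$ for a sufficiently small absolute constant $c$, so that the linear perturbation on each box (of size $O(cM)$) cannot erase the gap $M$. Everything else is standard Cauchy--Schwarz combined with a level-set estimate for $\Phi$, together with bookkeeping on $\mathbb R \times \mathbb Z$ measures.
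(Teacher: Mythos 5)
Your proposal is correct and takes essentially the same approach as the paper's proof: Cauchy--Schwarz (equivalently, Schur's test) reduces both estimates to a uniform measure bound on the resonance set, the $\mu$-direction is factored out at cost $\min\{L_1,L_2\}$, and for \eqref{eq:refined2} a lower bound on $\partial_\nu\Phi = \theta_1^2 - \theta_2^2 \sim MN$ — propagated from the single reference pair to the full frequency support using the small box side $cM$ — yields the extra gain in the spatial direction. Your bookkeeping is slightly cleaner in that you only ever evaluate $\theta_1,\theta_2$ on pairs that actually lie in $R_1 \times R_2$, avoiding the enlarged box $\tilde R_2$ that the paper introduces, but the argument is otherwise the same.
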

\begin{proof} We have that
\begin{align*}
&\widehat{P_1 Q_{L_1} u P_2 Q_{L_2} v }(-\xi, -n, -\tau)  \\
&=  \int_{\mathbb R} \sum_{k\in \mathbb Z} \int_{\mathbb R} \mathbbm{1}_{(\nu, k) \in R_1} \mathbbm{1}_{| \phi (\nu, k) - \mu | \leq 2L_1} \hat{u}(\nu, k, \mu) \mathbbm{1}_{(-\xi - \nu, -n-k) \in R_2} \mathbbm{1}_{| \phi (-\xi - \nu, -n-k) - (-\tau - \mu) | \leq 2L_2} \hat v (-\xi - \nu, n-k, -\tau - \mu) d\nu d\mu
\end{align*}

Using Cauchy-Schwarz, we see that
\begin{align*}
\| P_1 Q_{L_1} u P_2 Q_{L_2} v \|_{L^2}  \les \sup_{\tau \in \mathbb R, (\xi, n) \in -R_1 - R_2} |A_{\tau, \xi, n} |^{1/2} \| P_1 Q_{L_1} u \|_{L^2} \| P_2 Q_{L_2} v \|_{L^2},
\end{align*}
where $A_{\tau, \xi, n}$ is the set of $(\nu, k, \mu)$ such that all the conditions of the $\mathbbm{1}$ inside the integral are satisfied. The rest of the proof will consist on estimating $|A_{\tau, \xi, n} |^{1/2}$. Clearly, $\mu$ can range over at most $O(\min \{ L_1,   L_2 \})$ values, since $|\mu - \phi_1| \leq 2L_1$ and $|(-\tau - \mu) - \phi_2| \leq 2L_2$. Without loss of generality, assume $L_1 \leq L_2$, so that $|A_{\tau, \xi, n} | \les L_1 |\tilde A_{\tau, \mu, \xi, n}|$ where 
\begin{equation*}
\tilde A_{\tau, \mu, \xi, n} = \{ (\nu, k) \in R_1 \quad \text{s.t.} \quad (\xi + \nu, n+k) \in -R_2, \quad |\phi (\xi + \nu, n+k ) - (\tau + \mu)| \leq 2 L_2, \quad \mbox{and} \quad |\mu - \phi(\nu, k)| \leq 2L_1 \}.
\end{equation*}
The trivial estimate $| \tilde A_{\tau, \mu, \xi, n} | \leq | R_1 | \leq c^2 M^2$ yields \eqref{eq:refined1}. Let us now show \eqref{eq:refined2}. 

Using that $|\mu - \phi(\nu, k) | \leq 2L_1 \leq 2L_2$, summing the last two conditions of $\tilde A_{\tau, \mu, \xi, n}$ and applying triangular inequality, we note that $\tilde A_{\tau, \mu, \xi, n} \subset \tilde B_{\tau, \xi, n}$ where
\begin{equation*}
\tilde B_{\tau, \xi, n} = \{ (\nu, k) \in R_1 \quad \text{s.t.} \quad (\xi + \nu, n+k) \in -R_2 \quad \text{ and } \quad |\phi (\xi + \nu, n+k ) - \phi (\nu, k) - \tau| \leq 4 L_2 \}.
\end{equation*}
Now, for any fixed $(\xi, n)$, we compute
\begin{align}
D(\xi, n) &= \inf_{(\nu, k) \in R_1} \left| \frac{\p}{\p \nu}\left( \phi (\xi + \nu, n+k ) - \phi (\nu, k) \right) \right| \notag \\
&=  \inf_{(\nu, k) \in R_1}  \left|  (n+k)^2 + 3(\xi + \nu)^2 - 3 \nu^2 - k^2  \right| = \inf_{(\nu, k) \in R_1} \left| n^2+ 2 kn  + 3 \xi^2 + 6 \xi \nu \right| \label{eq:chocolate}
\end{align}
Let us now study $| \tilde B_{\tau, \xi, n} |$. There are $\les M$ possible values for $k$ (recall the hypothesis $M \gtrsim 1$). Let us fix one of them. We know that the set of $\nu$ such that $(\nu, k) \in R_1$ and $(\xi + \nu, n+k) \in -R_2$ is some connected interval $I_k$ because the regions $R_1$ and $-R_2$ are squares. We see that $(\nu, k) \in \tilde B_{\tau, \xi, n} $ if and only if $\nu \in I_k$ and $|\phi (\xi + \nu, n+k ) - \phi (\nu, k) - \tau| \leq 4 L_2$. As the derivative of this quantity with respect to $\nu$ is greater than $D(\xi, n)$, we have that the set of $\nu$ such that $(\nu, k) \in  \tilde B_{\tau, \xi, n}$ has measure at most $\frac{8L_2}{D(\xi, n)}$. As there are $\les M$ possible values for $k$, we obtain that
\begin{equation*}
\sup_{\tau \in \mathbb R, (\xi, n) \in -R_1 - R_2} | \tilde B_{\tau, \xi, n} | \les  L_2 M \sup_{ (\xi, n) \in -R_1 - R_2}\frac{1}{D(\xi, n)}.
\end{equation*}
Therefore, if we show that $\inf_{(\xi, n) \in -R_1 - R_2} D(\xi, n) \gtrsim  NM$, we are done. Let us do that.

For any $(\xi, n) \in -R_1 - R_2$, we let $(\zeta, m) = -(\xi, n) - (\nu, k)$. We have that $(\zeta, m)$ is in $(R_1 + R_2) - R_1 = R_2 + [-cM, cM]\times [cM, -cM]$. We call that region $\tilde R_2$, and note that it is a square of side $3cM$ centered at the same center as $R_2$.

Rewriting the expression from \eqref{eq:chocolate} in terms of $\nu, \zeta, k, m$, we have
\begin{align*}
\inf_{(\xi, n) \in R_1+R_2} D(\xi, n) &=  \inf_{\substack{ (\nu, k) \in R_1 \\ (\zeta, m) \in \tilde R_2}} \left| ((k+m)^2 + 3 (\nu + \zeta)^2 - 2k(k+m) - 6\nu (\nu + \zeta) \right| \\
&=  \inf_{\substack{ (\nu, k) \in R_1 \\ (\zeta, m) \in \tilde R_2}} \left| (3 \zeta^2 + m^2) - (3 \nu^2 + k^2) \right|
\end{align*}

Now recall every frequency on $R_1, R_2$ is bounded by $\frac{10}{9} \cdot 8N$. As $\tilde R_2$ is a box centered at the same place as $R_2$ and of side $3cM \leq \frac{3c}{128}N$ and $c\leq 1$, we have that any frequency of $\tilde R_2$ is bounded by $10N$. Using that the point $(\nu_0, k_0, \zeta_0, m_0)$ satisfies $|\theta_1 - \theta_2| \geq M$, and noting that $| \nu - \nu_0| \leq cM$ (and the same for other variables) we have
\begin{align*}
\inf_{(\xi, n) \in R_1 + R_2} D(\xi, n) &\geq |\theta_1^2 - \theta_2^2| - \sup_{(\nu, k)\in R_1} \left| 3 \nu^2 + k^2 - 3 \nu_0^2 - k_0^2 \right| - \sup_{(\zeta, m) \in \tilde R_2} \left| 3 \zeta^2 + m^2 - 3 \zeta_0^2 - m_0^2 \right| \\
&\geq  M \cdot (\theta_1 + \theta_2) - cM\sup_{(\nu, k)\in R_1} | 3\nu + 3 \nu_0| - cM\sup_{(\nu, k)\in R_1} |  k + k_0| \\
&\qquad- 3cM\sup_{(\zeta, m)\in R_2} |  m + m_0| - 3cM\sup_{(\zeta, m)\in R_2} | 3\zeta + 3 \zeta_0| \\
&\geq N M - 32cM \cdot 10 N \geq  \frac12 NM,
\end{align*}
assuming that $c < 1/640$. We also used that $\theta_1 + \theta_2 \geq N$ because $\theta_i \geq \sqrt{\nu^2 + k^2} \geq \frac{9 N}{10}$.
\end{proof}

\subsubsection{Analysis of the resonant set}

Let us recall that since we are in case \ref{case:Neq}, we have that $ N_{\rm{min}}\leq N_i \leq 4N_{\rm{min}}$, together with $N_{\rm{min}} \geq 64$ and $L\leq 2048 N_{\rm{min}}^3$. Our objective in this subsection is to subdivide the frequency space corresponding to those restrictions. We have seen in Proposition \ref{prop:refined} the quantities $|\theta_i - \theta_j|$ are of special relevance, since the bilinear estimates depend on their size. They arise as directional derivatives of $\Delta := \phi (\nu, k) + \phi (\zeta, m) + \phi( - \nu - \zeta, -k - m)$. A difficult case is when all the directional derivatives of $\Delta$ are small (one can also see this difficulty from trying to apply the non-stationary phase principle to the Duhamel formulation of the equation). Another relevant case is when $\Delta$ itself is small, which limits the gain one expects when integrating the solution due to its time oscilations. We will be able to divide the frequencies in two cases: one where $\Delta \gtrsim N_{\rm{min}}^2 M$ and another one where some $|\theta_i - \theta_j | \gtrsim N_{\rm{min}}M$. These regions will also depend on the dyadic parameter$M \gtrsim 1$ and will have area $\les N_{\rm{min}} M^3$ in the frequency space ($\nu, k, \zeta, m$). In one case, we will exploit the fact that we are far from space-like resonances (using our bilinear estimates), and in the other one, the fact that we are far from time-like resonances.

First of all, let us study the $\sigma$ of this case under the extra condition $| \theta_i - \theta_j | \leq \frac{N}{1000}$. For all this section, we consider $N_{\rm{min}} \geq 64$ to be fixed and consider:
\begin{align}  \begin{split}
S_0 &=  \left\{ \sigma \in S_{\rm{good}} \quad \mbox{s.t.}\quad N_{\rm{min}} \leq | (\nu, k)|, |(\zeta, m)|, |(\xi, n)| \leq 8 N_{\rm{min}} \right\},\\
S_1 &=  \left\{ \sigma \in S_{\rm{good}} \quad \mbox{s.t.}\quad N_{\rm{min}} \leq | (\nu, k)|, |(\zeta, m)|, |(\xi, n)| \leq 8N_{\rm{min}} , \quad |\theta_i - \theta_j | \leq \frac{N_{\rm{min}}}{1000} \qquad \forall i,j \in \{1, 2, 3\} \right\}. \label{eq:propertiesS4}
\end{split} \end{align}
where we recall $\theta_1(\sigma) = \sqrt{3 \nu^2 + k^2}$ (and similarly $\theta_2, \theta_3$). We note that $N = N_{\rm{max}} \approx N_{\rm{min}}$, however we will work mostly with $N_{\rm{min}}$ since it will be more convenient.

\begin{lemma} \label{lemma:localization} Let $w_1, w_2, w_3 \in \mathbb R^2$ such that $w_1 + w_2 + w_3 = 0$. Moreover, assume they are almost of the same size: $$ |w_1| - \mathcal E \leq |w_i| \leq |w_1| + \mathcal E$$ for every $i$ and some $|\mathcal E| < |w_1|/100$.

Then, we have that
\begin{equation} \label{eq:localization}
w_2 \in R_{\pm 2\pi/3} (w_1) + B, \qquad \mbox{ and } \qquad
w_3 \in R_{\mp 2\pi/3} (w_1) + B.
\end{equation}
where $\pm \in \{ 1, -1 \}$, $\mp = - (\pm)$ and $B = B_0(5\mathcal E)$ is a ball of radius $5\mathcal E$ centered at the origin. We have denoted with $R_{\varphi}$ the rotation of $\varphi$ radians on $\mathbb R^2$ centered at the origin.
\end{lemma}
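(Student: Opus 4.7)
The plan is to fix coordinates and reduce the problem to a short explicit computation. Since the statement is invariant under rotating all three vectors simultaneously about the origin (note that $R_{\pm 2\pi/3}$ commutes with any such rotation, and the ball $B$ is centered at the origin), I would rotate coordinates so that $w_1=(r,0)$ where $r=|w_1|$. Write $w_2=(a,b)$, and use $w_1+w_2+w_3=0$ to obtain $w_3=(-r-a,-b)$.

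The key identity is
\begin{equation*}
|w_3|^2-|w_2|^2=(-r-a)^2+b^2-a^2-b^2=r^2+2ra,
\end{equation*}
combined with the hypothesis that both $|w_2|^2$ and $|w_3|^2$ lie in $[(r-\mathcal{E})^2,(r+\mathcal{E})^2]$. This forces $|r^2+2ra|\leq 4r\mathcal{E}$, hence $|a+r/2|\leq 2\mathcal{E}$. Plugging this back into $|w_2|^2=a^2+b^2\in[(r-\mathcal{E})^2,(r+\mathcal{E})^2]$ gives a bound of the form $|b^2-3r^2/4|\leq Cr\mathcal{E}$ for an explicit small constant $C$. Because $\mathcal{E}<r/100$, the two candidates $b=\pm r\sqrt{3}/2$ are well separated, so $b$ is within distance $\leq C'\mathcal{E}$ of exactly one of them, with $C'$ controllable via factoring $b^2-3r^2/4=(b-r\sqrt{3}/2)(b+r\sqrt{3}/2)$ and using that the non-vanishing factor has absolute value at least $r\sqrt{3}/2-C'\mathcal{E}\gtrsim r$.

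Thus $w_2$ lies within distance $\sqrt{(2\mathcal{E})^2+(C'\mathcal{E})^2}<5\mathcal{E}$ of one of the two points $(-r/2,\pm r\sqrt{3}/2)$; a direct computation with the $2\times 2$ rotation matrix shows $R_{\pm 2\pi/3}(w_1)=(-r/2,\pm r\sqrt{3}/2)$ in our coordinates, giving the first inclusion in \eqref{eq:localization}. The companion claim for $w_3$ is then free: $w_3=-w_1-w_2$ and $R_{\mp 2\pi/3}(w_1)=-w_1-R_{\pm 2\pi/3}(w_1)$ (since $w_1,R_{2\pi/3}(w_1),R_{-2\pi/3}(w_1)$ sum to $0$), so $w_3-R_{\mp 2\pi/3}(w_1)=-(w_2-R_{\pm 2\pi/3}(w_1))$ and the same ball bound transfers.

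The only step requiring genuine care is tracking the constants to land strictly inside a ball of radius $5\mathcal{E}$: one must use $\mathcal{E}<r/100$ both to absorb the $\mathcal{E}^2$ quadratic corrections into a term of size $Cr\mathcal{E}$ and to ensure that the lower bound on $|b+r\sqrt{3}/2|$ (resp.\ $|b-r\sqrt{3}/2|$) in the unambiguous-sign regime is close enough to $r\sqrt{3}$ for the final Euclidean error $\sqrt{(2\mathcal{E})^2+(C'\mathcal{E})^2}$ to be $<5\mathcal{E}$. No further ingredients beyond elementary algebra are needed.
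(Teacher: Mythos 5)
Your proof is correct and takes a genuinely different route from the paper. The paper works intrinsically with the law of cosines, deriving $\left|\cos\measuredangle(w_1,w_2) + \tfrac12\right| \leq 2\mathcal{E}/|w_1|$ and then passing to angles via the bound $|\arccos'| \leq 2$ on the relevant subinterval of $[-1,1]$ before converting the angular deviation back to a Euclidean distance. You instead fix coordinates so that $w_1 = (r,0)$, observe the key identity $|w_3|^2 - |w_2|^2 = r^2 + 2ra$ pinning down the first coordinate $a$ of $w_2$ to within $2\mathcal{E}$ of $-r/2$, and then pin down $b$ by factoring $b^2 - 3r^2/4 = (b - r\sqrt{3}/2)(b + r\sqrt{3}/2)$ and lower-bounding the non-vanishing factor. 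This is more elementary (no transcendental functions or their derivatives) and arguably more transparent about how the constant $5$ arises: tracing your computation carefully with $\mathcal{E} < r/100$ yields a final Euclidean distance of roughly $\sqrt{4 + 5.6}\,\mathcal{E} < 3.2\mathcal{E}$, leaving comfortable room. Your handling of $w_3$ via the identity $w_3 - R_{\mp 2\pi/3}(w_1) = -(w_2 - R_{\pm 2\pi/3}(w_1))$, which uses $w_1 + R_{2\pi/3}(w_1) + R_{-2\pi/3}(w_1) = 0$, is also cleaner than the paper's appeal to symmetry followed by a separate remark ruling out the degenerate sign-coincidence. The paper's angle-based argument has the virtue of making the $2\pi/3$-rotation structure manifest without choosing coordinates, which is conceptually aligned with how the lemma is later applied, but either proof is complete.
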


\begin{figure}[htbp]
    \centering
    \begin{tikzpicture}[
    scale=1.5,
    vector/.style={->,thick},
    ball/.style={draw,dashed,circle}
]
    \draw[vector,blue] (0,0) -- (3,0) node[right] {$w_1$};
    
    \draw[vector,red] (0,0) -- ({-1.35},{2.65}) node[above] {$w_2$};  
    \draw[vector,green!50!black] (0,0) -- ({-1.62},{-2.52}) node[below] {$w_3$}; 
    
    \draw[dashed,gray] (0,0) -- ({-1.5},{2.598076}) node[right=1cm] {$R_{2\pi/3}(w_1) + B$};
    \draw[dashed,gray] (0,0) -- ({-1.5},{-2.598076}) node[right=1cm] {$R_{-2\pi/3}(w_1) + B$};
    
    \draw[ball] ({-1.5},{2.598076}) circle (0.45);
    \draw[ball] ({-1.5},{-2.598076}) circle (0.45);
    
    \draw[->] (0.8,0) arc(0:120:0.8) node[above left] {$\frac{2\pi}{3}$};
    \draw[->] (0.8,0) arc(0:-120:0.8) node[below left] {$-\frac{2\pi}{3}$};
    
    \node[below right] at (1,-2) {$w_1 + w_2 + w_3 = 0$};
    
    \node[above right] at (1,2) {$|w_1| - \mathcal{E} \leq |w_i| \leq |w_1| + \mathcal{E}$};
    
\end{tikzpicture}  
    \caption{Illustration of Lemma~\ref{lemma:localization}. The vectors $w_1$, $w_2$ and $w_3$ (shown in blue, red and green respectively) add up to zero and have approximately equal lengths. $w_2$ and $w_3$ must lie in the dashed balls centered at $R_{\pm 2\pi/3}(w_1)$.}
    \label{fig:localization_figure}
\end{figure}
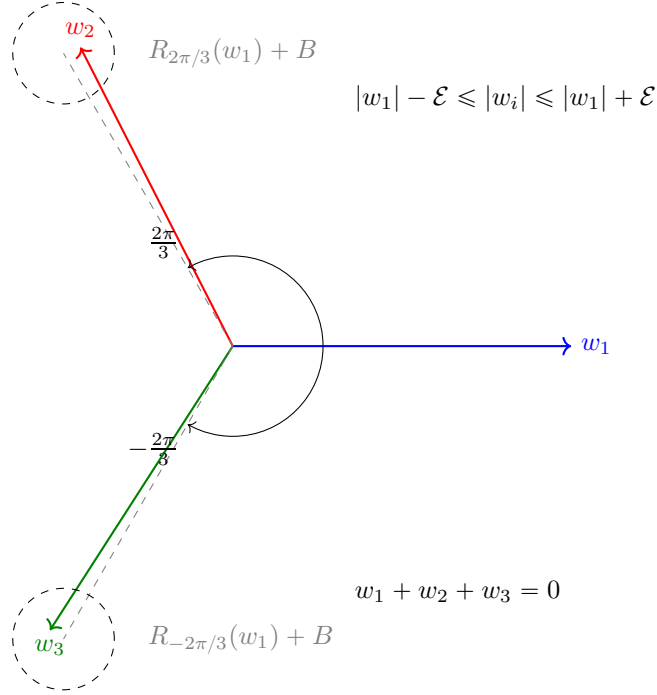


\begin{proof}
From our assumptions, we see that
\begin{equation*}
|w_3|^2 = |-w_1-w_2|^2 = |w_1|^2 + |w_2|^2 + 2 w_1 \cdot w_2
\end{equation*}
We pass $|w_1|^2$ subtracting to the left hand side and use that $\left| |w_1|^2 - |w_3|^2 \right| = \left| |w_1| - |w_3| \right| \cdot ( |w_1| + | w_3 | )\leq \frac52 |w_1|\mathcal E$. Dividing by $2|w_1||w_2|$, we obtain that
\begin{equation*}
\left| \frac{w_1 \cdot w_2}{|w_1| |w_2|} + \frac{|w_2|}{2|w_1|} \right| \leq \frac54 \cdot \frac{\mathcal E} {|w_2|} \leq \frac32 \frac{\mathcal E}{|w_1|}
\end{equation*}
where in the last inequality we use $| |w_1|-|w_2| | \leq |w_1|/100$.

Using that $| |w_1| - |w_2|  | \leq \mathcal E$, we get that
\begin{equation*}
\left| \cos \measuredangle (w_1, w_2) - \frac{-1}{2} \right| \leq 2 \frac{\mathcal E}{|w_1|}
\end{equation*}
Now, we take the function $\arccos :[-1, 1] \to [0, \pi]$ on both terms of the absolute value above. Note that both $ \cos \measuredangle (w_1, w_2)$ and $-1/2$ are in the interval $[-4/10, 6/10]$ (using the hypothesis $\mathcal E \leq |w_1|/100$). In that interval, we have $| \arccos '(x)| \leq 2$ so taking $\arccos$ can only amplify the distance between $\cos \measuredangle (w_1, w_2)$ and $-1/2$ by a factor of $2$. Therefore,
\begin{equation} \label{eq:abelcaballero}
\left| \measuredangle (w_1, w_2) - \frac{2\pi}{3} \right| \leq 4 \frac{\mathcal E}{|w_1|}
\end{equation}
Note that this argument assumes $\measuredangle (w_1, w_2) \in [0, \pi]$ in order to have $\arccos$ well-defined, so does not discriminate if the angle is clockwise or counterclockwise. For some choice of sign $\pm$, we have that
\begin{equation*}
\left| w_2 - R_{\pm 2\pi/3} w_1 \right| \leq \left| \frac{w_2}{|w_2|} |w_1| - R_{\pm 2\pi/3} w_1 \right| + \left| w_2 - \frac{w_2}{|w_2|} |w_1| \right| = |w_1| \left| \measuredangle (w_1, w_2) - \frac{2\pi}{3} \right| + \left| |w_2| - |w_1| \right|
\end{equation*}
Thus, for the appropriate choice of $\pm$, and using \eqref{eq:abelcaballero}, we get
\begin{equation*}
\left| w_2 - R_{\pm 2\pi/3} w_1 \right| \leq 5 \mathcal E
\end{equation*}
The same can be trivially deduced for $w_3$ since the original problem is symmetric. However, note that the choice of signs in $\pm$ for $w_2$ and $w_3$ must be different since $w_1 + 2 R_{\pm 2\pi /3} w_1  \neq 0$ unless $w_1 = 0$. This concludes the desired result.\end{proof} 
\begin{lemma} \label{cor:coro}Let $C = 2^{12}$. Assume that $\sigma \in S_1$, and moreover that $\min \{ | \nu |, | \zeta |, | \xi |\} \geq C \max \{ | \theta_i - \theta_j | \}$. Then we have that 
\begin{equation} \label{eq:nuez}
| \phi (\nu, k) + \phi ( \zeta, m) + \phi ( \xi, n) | \geq  \frac{N_{\rm{min}}^2}{100} \min \{ | \nu |, | \zeta |, | \xi |\}.
\end{equation}

\end{lemma}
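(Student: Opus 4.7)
The starting point is the algebraic identity
\[
\Delta \;=\; -6\nu\zeta\xi + \nu\theta_1^2 + \zeta\theta_2^2 + \xi\theta_3^2,
\]
which follows from $\phi(x,y)=x^3+xy^2$ together with $\nu_i\theta_i^2 = 3\nu_i^3+\nu_i k_i^2$ and the Newton identity $\nu^3+\zeta^3+\xi^3=3\nu\zeta\xi$ (valid because $\nu+\zeta+\xi=0$). The plan is to show that the cubic term $-6\nu\zeta\xi$ is the dominant contribution, while the correction $E:=\nu\theta_1^2+\zeta\theta_2^2+\xi\theta_3^2$ is of lower order because of the approximate equilateral structure provided by $\sigma\in S_1$.

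For the correction I would use $\nu+\zeta+\xi=0$ to rewrite $E=\zeta(\theta_2^2-\theta_1^2)+\xi(\theta_3^2-\theta_1^2)$. Writing $\mathcal E := \max\{|\theta_i-\theta_j|\}$, the condition $\sigma\in S_1$ gives $\theta_i\les N_{\rm{min}}$ and $|\theta_i-\theta_j|\leq \mathcal E$, so the factorization $\theta_i^2-\theta_j^2=(\theta_i-\theta_j)(\theta_i+\theta_j)$ yields $|\theta_i^2-\theta_j^2|\les N_{\rm{min}}\mathcal E$. Combined with $|\zeta|,|\xi|\leq 8N_{\rm{min}}$, this produces $|E|\les N_{\rm{min}}^2\mathcal E$.

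For the main term, by symmetry of the conclusion I may relabel the three frequencies so that $|\nu|=\min\{|\nu|,|\zeta|,|\xi|\}$. Applying Lemma~\ref{lemma:localization} to the vectors $w_i=(\sqrt{3}\nu_i,k_i)$ (which sum to zero, have norms $\theta_i$, and satisfy $\mathcal E\leq N_{\rm{min}}/1000\ll |w_1|/100$) gives $(\sqrt{3}\zeta,m)=R_{\pm 2\pi/3}(\sqrt{3}\nu,k)+v$ with $|v|\leq 5\mathcal E$. Taking the $+$ sign, this translates to $\zeta=-\nu/2-k/2+\epsilon_1$, $\xi=-\nu/2+k/2-\epsilon_1$ with $|\epsilon_1|\leq 5\mathcal E/\sqrt{3}$. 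I split on $|\nu|\geq N_{\rm{min}}/4$ versus $|\nu|<N_{\rm{min}}/4$: in the first case $|\zeta|,|\xi|\geq|\nu|\geq N_{\rm{min}}/4$ is automatic; in the second case $\theta_1^2=3\nu^2+k^2\geq N_{\rm{min}}^2$ forces $k^2\geq 13N_{\rm{min}}^2/16$, so the equilateral formulas $|\zeta_0|=|\nu+k|/2$ and $|\xi_0|=|k-\nu|/2$ together with the perturbation bound $|\epsilon_1|\les \mathcal E$ give $|\zeta|,|\xi|\gtrsim N_{\rm{min}}$. In either subcase $|\zeta||\xi|\gtrsim N_{\rm{min}}^2$, and therefore $|6\nu\zeta\xi|\gtrsim N_{\rm{min}}^2|\nu|$ with a concrete constant (e.g.\ at least $3/8$).

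Combining the two estimates, $|\Delta|\geq|6\nu\zeta\xi|-|E|\gtrsim N_{\rm{min}}^2|\nu|-C'N_{\rm{min}}^2\mathcal E$, and the hypothesis $|\nu|\geq 2^{12}\mathcal E$ is calibrated so that the cubic term wins with margin, yielding $|\Delta|\geq N_{\rm{min}}^2|\nu|/100$. The main obstacle is the careful bookkeeping of constants: the factor $C=2^{12}$ is not arbitrary, but is chosen to absorb the crude loss $|\zeta|+|\xi|\leq 16N_{\rm{min}}$ in the bound on $|E|$ against a concrete lower bound like $|\zeta||\xi|\geq N_{\rm{min}}^2/16$ for the cubic term, so the final inequalities need to be executed with explicit numerical constants rather than merely $\les$.
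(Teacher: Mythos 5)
Your proof is correct and follows essentially the same strategy as the paper: the same algebraic identity $\Delta=-6\nu\zeta\xi+\bigl[\zeta(\theta_2^2-\theta_1^2)+\xi(\theta_3^2-\theta_1^2)\bigr]$, the same invocation of Lemma~\ref{lemma:localization} with $w_i=(\sqrt{3}\nu_i,k_i)$, and the same dominance of the cubic term over the error $|E|\les N_{\rm{min}}^2\max|\theta_i-\theta_j|$ absorbed via the hypothesis $\min\{|\nu|,|\zeta|,|\xi|\}\geq 2^{12}\max|\theta_i-\theta_j|$. The only variation is in how you obtain $|\zeta||\xi|\gtrsim N_{\rm{min}}^2$ — you split on $|\nu|\gtrless N_{\rm{min}}/4$ and substitute the explicit equilateral coordinates, whereas the paper observes more directly that at most one of the three (near-)rotated vertices of the equilateral triangle can lie in a thin vertical strip, giving $\mathrm{med}\{|\nu|,|\zeta|,|\xi|\}\geq 2N_{\rm{min}}/10$; the two executions yield equivalent numerical constants.
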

\begin{proof}
Note that from $\sigma \in S_1$, we can apply Lemma \ref{lemma:localization} with vectors $w_i$ given by $(\sqrt{3} \nu, k)$, $(\sqrt{3} \zeta, m)$ and $(\sqrt{3} \xi, n)$ and $\mathcal{E} = \frac{N_{\rm{min}}}{1000}$. Therefore, $w_2$ and $w_3$ correspond to rotations of $\pm \frac{2\pi}{3}$ radians of $w_1$, up to an error smaller than $5\mathcal E$. 

In the $(x, y)$ plane, the regions $x \geq |w_1|/2$ and $x \leq -|w_1|/2$ cover each of them $\frac{2\pi}{3}$-radians of the circunference of radius $|w_1|$. Therefore, of the three points $w_1$, $R_{2\pi/3}w_1$ and $R_{-2\pi/3}w_1$, at least one of them is on each area.\footnote{There can be two of them if they are on the boundary of the region} Thus, at most one of those three points is in the region $|x| < |w_1|/2$. If we now consider the points $w_1, w_2, w_3$, which may differ up to an error of $5\mathcal E \leq \frac{5}{100} |w_1|$ from the previous set of three points considered, we see that at most one of those three points is in the region $|x| < \frac{45}{100} |w_1|$. We obtain that \begin{equation} \label{eq:pelota} \text{med} \{ | \nu |, |\zeta |, |\xi| \} \geq \frac{45}{100 \sqrt{3}} |w_1| \geq \frac{2N_{\rm{min}}}{10} \end{equation}

Now, note that 
\begin{align} \label{eq:pera}
\left| \phi( \nu, k ) + \phi( \zeta, m) + \phi ( \xi, n ) \right| &=  \left| \nu ( \theta_1^2 - 2 \nu^2 ) + \zeta (\theta_2^2 - 2 \zeta^2) + \xi (\theta_3^2 - 2 \xi^2) \right|  \\
&= \left| 6 \nu \zeta (\nu + \zeta) + \zeta ( \theta_2^2 - \theta_1^2) + \xi (\theta_3^2 - \theta_1^2 ) \right| \notag \\
&\geq 6 \cdot \frac{2N_{\rm{min}}}{10} \frac{2N_{\rm{min}}}{10} \min \{ |\nu |, | \zeta |, |\xi | \} - 16 N_{\rm{min}} \max | \theta_i + \theta_j | \max | \theta_i - \theta_j | \notag \\
&= \frac{6}{25} N_{\rm{min}}^2 \min \{ |\nu |, | \zeta |, |\xi | \} - 512 N_{\rm{min}}^2 \max | \theta_i - \theta_j |  \notag
\end{align}
In the second equality, we used $\nu\theta_1^2 = -\zeta \theta_1^2 - \xi \theta_1^2$ together with $-2\xi^3 = 2(\zeta + \nu)^3$. In the third line we used \eqref{eq:pelota}. Finally, using our assumption on $\max | \theta_i - \theta_j |$, we get
\begin{equation*}
\left| \phi( \nu, k ) + \phi( \zeta, m) + \phi ( \xi, n ) \right|  \geq \left(\frac{6}{25} - \frac{512}{C}\right) N_{\rm{min}}^2 \min \{ |\nu |, | \zeta |, |\xi | \}, 
\end{equation*}
and plugging in $C = 2^{12}$ we are done. \end{proof}

\begin{remark} One can see from the second line of \eqref{eq:pera} that we also have $| \phi (\nu, k) + \phi ( \zeta, m) + \phi ( \xi, n) | \les  N_{\rm{min}}^2 \rm{min} \{ |\nu|, |\zeta|, |\xi| \}$. We will not use that fact, but it may be helpful to the reader noticing that $N^2 \min \{ | \nu |, |\zeta |, | \xi | \}$ is basically equal (up to a constant) to $\Delta$ under the assumption that we are far from space-like resonances (assumption $\min \{ | \nu |, | \zeta |, | \xi |\} \geq C \max \{ | \theta_i - \theta_j | \}$). 
\end{remark}

The objective of the above lemma is the following. It is difficult to directly study the cubic polynomial $\phi (\nu, k) + \phi (\zeta, m) + \phi (-\nu - \zeta , -k -m)$ in $\mathbb R^2 \times \mathbb Z^2$. However, we only need to lower-bound the absolute value of that cubic, in the case where $|\theta_i - \theta_j |$ is small. In that case, Lemma \ref{cor:coro} tells us that we can look at the much simpler quantity $\min \{ |\nu |, | \zeta |, | \xi | \}$ instead. 

Now, we split the set of frequencies $S_0$ with the a dyadic partition. The idea is to consider a process where we divide $M$ by two at every step until one of the following two stopping conditions is met: either $|\theta_i - \theta_j| \geq 2CM$ or $| \theta_i - \theta_j | \geq M$. We say that a certain frequency is an $M$-interaction if the process stops at $M$. If the first stopping condition is satisfied, we call it interaction of first kind; otherwise, it is of second kind. If no condition is satisfied, we keep halving $M$ until one of them is satisfied or we reach a certain minimum value $\beta$. We give the rigorous definition below.

\begin{definition} \label{def:Minteraction} Let us define an auxiliary constant $C_2 = 512C$. For some small parameter $\beta$ to be fixed later and any dyadic number $M$ with $\beta \leq M < \frac{N_{\rm{min}}}{C_2}$, we say that some element of $S_1$ is an $M$-interaction if: \begin{itemize}
\item \textit{(Halving process did not stop before $M$)} We have that $\min \{ | \nu |, | \zeta |, | \xi | \} < 4CM$ and $ | \theta_i - \theta_j | < 2M$ for all $i,j$.
\item \textit{(Halving process stops at $M$)} We have that either $\min \{ | \nu |, | \zeta |, | \xi | \} \geq 2CM$ or $| \theta_i - \theta_j | \geq M$ for some $i, j$. 
\end{itemize}
We say the interaction is of first kind if $\min \{ | \nu |, | \zeta |, | \xi | \} \geq 2CM$ and of second kind otherwise.\footnote{In particular, note that if both stopping conditions are satisfied simultaneously, that is, $\min \{ | \nu |, | \zeta |, | \xi | \} \geq 2CM$ and $| \theta_i - \theta_j | \geq M$, we consider the interaction to be of first kind.} In the extremal case of the biggest possible $M$,  ($\frac{N_{\rm{min}}}{C_2}$-interactions) we do not require the first item. Moreover, we also define any element of $S_0 \setminus S_1$ to be an $\frac{N_{\rm{min}}}{C_2}$-interaction of second kind. 

We let $S_{M} \subset S_0$ denote the set of $M$-interactions of $S_0$. We let $S_{M}^1$ and $S_{M}^2$ denote the sets of $M$-interactions of first and second kind respectively. 
\end{definition}

As we will see afterwards, these $M$-interactions cover the whole set $S_0$ since the cases where $\min \{ | \nu |, | \zeta |, | \zeta | \} < 2C\beta$ and $|\theta_i - \theta_j| < \beta$ will correspond precisely to $S_{\rm{bad}}$ (the set of resonant frequencies that is treated differently). First, let us note the following Remark and Corollary.

\begin{remark} Note that for interactions of first kind we can apply Lemma \ref{cor:coro}, since $\min \{ | \nu |, | \zeta |, | \xi | \} \geq 2CM$ and $| \theta_i - \theta_j | < 2M$. Therefore, we have \eqref{eq:nuez}. For interactions of second kind we will apply our bilinear estimates from Proposition \ref{prop:refined}, as a consequence of our next corolalry.
\end{remark}


\begin{corollary} \label{cor:bigbox} Suppose that $\sigma$ is a $\frac{N_{\rm{min}}}{C_2}$-interaction or that it satisfies the first condition of Definition \ref{def:Minteraction}. In particular, this assumption covers the case $\sigma\in S_{M}$ for all dyadic $M$ with $\beta \leq M \leq \frac{N_{\rm{min}}}{C_2}$. Then we have
\begin{equation*}
|\nu | = | (\nu, k) - (0, k) | \les M, \qquad |(\zeta, m) - (-k/2, -k/2) | \les M, \qquad | (\xi, n) - (k/2, -k/2) | \les M,
\end{equation*}
up to some rearrangement of the triplets $(\nu, k)$, $(\zeta, m)$ and $(\xi, n)$. 
\end{corollary}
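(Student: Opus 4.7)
The plan is to reduce everything to an application of Lemma~\ref{lemma:localization}. Consider the planar vectors $w_1:=(\sqrt 3\,\nu,\,k)$, $w_2:=(\sqrt 3\,\zeta,\,m)$, $w_3:=(\sqrt 3\,\xi,\,n)$, which satisfy $w_1+w_2+w_3=0$ and $|w_i|=\theta_i$. Assume first that $\sigma$ satisfies the first item of Definition~\ref{def:Minteraction}. Then $|\theta_i-\theta_j|\leq 2M$ for every $i,j$, so we may set $\mathcal E=2M$ in Lemma~\ref{lemma:localization}; the required smallness $\mathcal E<|w_1|/100$ is guaranteed by $|w_1|\geq|(\nu,k)|\geq N_{\rm{min}}\geq C_2 M=2^{21}M$. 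The lemma then produces a sign $\epsilon\in\{+1,-1\}$ with
\[
\bigl|\,w_2-R_{\epsilon\,2\pi/3}\,w_1\bigr|\leq 10M,\qquad \bigl|\,w_3-R_{-\epsilon\,2\pi/3}\,w_1\bigr|\leq 10M.
\]

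Since $\min\{|\nu|,|\zeta|,|\xi|\}<4CM$, I would relabel the three triples (this is the ``up to rearrangement'' permutation) so that $|\nu|<4CM$. A direct computation of the rotation matrix gives
\[
R_{\epsilon\,2\pi/3}(\sqrt 3\,\nu,k)=\bigl(-\tfrac{\sqrt 3}{2}(\nu+\epsilon k),\;\epsilon\tfrac{3}{2}\nu-\tfrac{1}{2}k\bigr).
\]
Taking $\epsilon=+1$ and comparing coordinates against $w_2=(\sqrt 3\,\zeta,m)$, the first displayed bound yields $|\sqrt 3\,\zeta+\tfrac{\sqrt 3}{2}k|\leq 10M+\tfrac{\sqrt 3}{2}|\nu|$ and $|m+\tfrac{1}{2}k|\leq 10M+\tfrac{3}{2}|\nu|$, which, combined with $|\nu|<4CM$, collapse to $|(\zeta,m)-(-k/2,-k/2)|\lesssim M$. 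The identical computation applied to $w_3$ with the rotation $R_{-2\pi/3}$ gives $|(\xi,n)-(k/2,-k/2)|\lesssim M$, and the bound $|\nu|\lesssim M$ is simply the assumption $|\nu|<4CM$. If instead $\epsilon=-1$, one swaps the labels of $(\zeta,m)$ and $(\xi,n)$, which is absorbed into the rearrangement allowed in the statement.

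It remains to dispose of the extremal $\tfrac{N_{\rm{min}}}{C_2}$-interactions for which the first item of Definition~\ref{def:Minteraction} may fail. There $\sigma\in S_0$, so every frequency has magnitude at most $8N_{\rm{min}}=8 C_2 M$, and all three inequalities in the statement are trivially true with an implicit constant depending only on $C_2$. The main (mild) obstacle in the argument is purely bookkeeping: one must keep track of the two orientations of the equilateral configuration delivered by Lemma~\ref{lemma:localization} and check that both are consistent, modulo a permutation of the three frequency triples, with the triangle of vertices $(0,k)$, $(-k/2,-k/2)$, $(k/2,-k/2)$; once this is done, the remainder of the proof is elementary algebra using $|\nu|\lesssim M$ to absorb the $\nu$-dependent error terms.
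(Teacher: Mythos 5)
Your proposal is correct and follows essentially the same approach as the paper: apply Lemma~\ref{lemma:localization} with $w_i=(\sqrt3\,\nu,k),(\sqrt3\,\zeta,m),(\sqrt3\,\xi,n)$ and $\mathcal E\lesssim M$, relabel the triples so that $|\nu|=\min\{|\nu|,|\zeta|,|\xi|\}<4CM$, absorb the error in replacing $R_{\pm 2\pi/3}(\sqrt3\,\nu,k)$ by $R_{\pm 2\pi/3}(0,k)$ using $|\nu|\lesssim M$, and dispose of the extremal $\tfrac{N_{\min}}{C_2}$-interactions trivially from the annulus bound $|(\nu,k)|\leq 8N_{\min}$. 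You are a bit more explicit than the paper in writing out the rotation matrix and in verifying the hypothesis $\mathcal E<|w_1|/100$, but the key lemma, the decomposition, and the rearrangement bookkeeping are identical.
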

\begin{proof}
First, let us note that the result is trivial for $\frac{N_{\rm{min}}}{C_2}$-interactions, since all the frequencies are bounded by $8N_{\rm{min}}$ and $M \approx N_{\rm{min}}$ in this case. Thus, let us suppose that the first condition in Definition \ref{def:Minteraction} is satisfied.

Without loss of generality, assume that $| \nu | = \min \{ | \nu |,  | \zeta |,  | \xi | \} \les M$. Now, note that Lemma \ref{lemma:localization} can be applied taking $w_i$ to be $(\sqrt{3} \nu, k)$, $(\sqrt{3} \zeta, m)$, $(\sqrt{3} \xi, n)$. We obtain that
\begin{equation*}
\left| (\zeta, m) - (-k/2, -k/2 )\right| \les \left| (\sqrt{3} \zeta, m) - R_{-2\pi/3} (0, k) \right| \les \left| (\sqrt{3} \zeta, m) - R_{-2\pi/3} (\sqrt{3} \nu, k) \right| + | \nu | \les M
\end{equation*}
where in the last inequality we used the consequence of Lemma \ref{lemma:localization}. We also assumed a sign $-$ on $-2\pi/3$. We would obtain the same bound with $+2\pi/3$ rearranging $(\sqrt{3} \zeta, m)$ and $(\sqrt{3} \xi, n)$. An analogous reasoning gives the bound for $(\xi, n)$. \end{proof}

Now, we fix $\beta$ sufficiently small so that all $S_0$ is covered by the $M$-interactions from Definition \ref{def:Minteraction}. The only case that is not covered in Definition \ref{def:Minteraction}, is the case where $\min \{ |\nu |, |\zeta |, |\xi | \} < 2C \beta$ and $|\theta_i - \theta_j | < \beta$. Those cases are covered by the hypothesis of Corollary \ref{cor:bigbox} with $M = \beta/2$. Therefore, there exists some small constant $\beta$ such that, for any $\sigma \in S_0$ that is not an $M$-interaction, we have:
\begin{equation} \label{eq:ravioli}
|\nu | \leq \frac{1}{10}, \qquad |\zeta + k/2 | \leq \frac{1}{10}, \qquad  | \xi - k/2 | \leq \frac{1}{10}, \qquad \mbox{ and } \qquad m = n = -k/2,
\end{equation}
up to rearranging the pairs $(\nu, k)$, $(\zeta, m)$ and $(\xi, n)$. However, this implies that $\sigma \in S_{\rm{bad}}$, so it is not in $S_0$ (which is defined as a subset of $S_{\rm{good}}$). From now on, we just fix $\beta$ so that \eqref{eq:ravioli} holds and we have that 
\begin{equation}
S_0 = \bigcup_{M} S_{M}
\end{equation}
where $M$ is assumed to run over powers of $2$ between $\beta$ and $\frac{N_{\rm{min}}}{C_2}$

\subsubsection{Division of frequency}

The purpose of this section is to divide the set of $M$-interactions, $S_M$, in products of boxes for which ones we can either apply Proposition \ref{prop:refined} or every point satisfies the hypothesis of Lemma \ref{cor:coro}. From Corollary \ref{cor:bigbox}, we know that any $\sigma \in S_M$ satisfies that the three elements $(\nu, k)$, $(\zeta, m)$ and $(\xi, n)$ are all at distance $\les M$ from one of the lines $(0, t)$, $(-t/2, -t/2)$ or $(t/2, -t/2)$ (parametrized by $t$). Those pairs also have norm between $N_{\rm{min}}$ and $8N_{\rm{min}}$. Let us call $\mathcal R_M$ to that neighbourhood of those three lines, restricted to the circular annulus defined by having norm between $N_{\rm{min}}$ and $8N_{\rm{min}}$. Thus, $(\nu, k)$, $(\zeta, m)$, $(\xi, n)$ are all in $\mathcal R_M$. We depict $\mathcal R_M$ in Figure \ref{fig:RM}.

\begin{figure}[htbp]
    \centering
    \begin{tikzpicture}[
    scale=1.2,
    strip/.style={
        fill=blue!15,
        opacity=0.7
    }
]
    \begin{scope}
        \clip (0,0) circle (4);
        
        \fill[strip] (-0.3,-4.5) rectangle (0.3,4.5);
        \fill[strip] ({-4.5*cos(45)-0.3*cos(-45)},{-4.5*sin(45)-0.3*sin(-45)}) -- 
                    ({4.5*cos(45)-0.3*cos(-45)},{4.5*sin(45)-0.3*sin(-45)}) --
                    ({4.5*cos(45)+0.3*cos(-45)},{4.5*sin(45)+0.3*sin(-45)}) --
                    ({-4.5*cos(45)+0.3*cos(-45)},{-4.5*sin(45)+0.3*sin(-45)}) -- cycle;
        \fill[strip] ({-4.5*cos(135)-0.3*cos(-135)},{-4.5*sin(135)-0.3*sin(-135)}) -- 
                    ({4.5*cos(135)-0.3*cos(-135)},{4.5*sin(135)-0.3*sin(-135)}) --
                    ({4.5*cos(135)+0.3*cos(-135)},{4.5*sin(135)+0.3*sin(-135)}) --
                    ({-4.5*cos(135)+0.3*cos(-135)},{-4.5*sin(135)+0.3*sin(-135)}) -- cycle;
    \end{scope}
    
    \fill[white] (0,0) circle (2);
    
    \foreach \x in {-0.4,-0.2,0,0.2} {
        \foreach \y in {-4.1,-3.9,-3.7,-3.5,-3.3,-3.1,-2.9,-2.7,-2.5,-2.3,-2.1} {
            \draw[gray, opacity=0.3] (\x,\y) rectangle (\x+0.2,\y+0.2);
        }
    }
    
    \draw[dashed] (0,-4) -- (0,4);
    \draw[dashed] ({-4*cos(45)},{-4*sin(45)}) -- ({4*cos(45)},{4*sin(45)});
    \draw[dashed] ({-4*cos(135)},{-4*sin(135)}) -- ({4*cos(135)},{4*sin(135)});
    
    \draw[dashed] (0,0) circle (2);
    \draw[dashed] (0,0) circle (4);
    
    \draw[<->,line width=1pt] (-0.1,0) -- (2,0) node[midway, above] {$N_{\text{min}}$};
    
    \draw[->,line width=1pt] (0,0) -- ({4*cos(45)},{4*sin(45)});
    \draw[<-,line width=1pt] (0,0) -- ({0.05*cos(45)},{0.05*sin(45)});
    \node[right] at ({2.5*cos(45)},{2.5*sin(45)}) {$8N_{\text{min}}$};
    
    \draw[|<->|,line width=1pt] (0.3,2.5) -- (-0.3,2.5) node[midway,above] {$O(M)$};
    
    \node[above] at (0,4) {$(0,t)$};
    \node[above] at (-2.5,2.5) {$(-t/2,-t/2)$};
    \node[above=0.3cm] at (2.5,2.5) {$(t/2,-t/2)$};
    
    \node[blue, below right] at (2,-2) {$\mathcal{R}_M$};
    
\end{tikzpicture}  
    \caption{The region $\mathcal{R}_M$ (shaded) consists of $O(M)$-neighborhoods of three lines $(0,t)$, $(-t/2,-t/2)$, and $(t/2,-t/2)$, restricted to the annulus between radii $N_{\rm{min}}$ and $8N_{\rm{min}}$. We also have covered $\mathcal R_M$ with squares of size $cM$. In order to improve the clarity of the picture, it is not done at scale, and the covering in squares of length $cM$ is only done for one of the six connected components of $\mathcal R_M$}
    \label{fig:RM}
\end{figure}
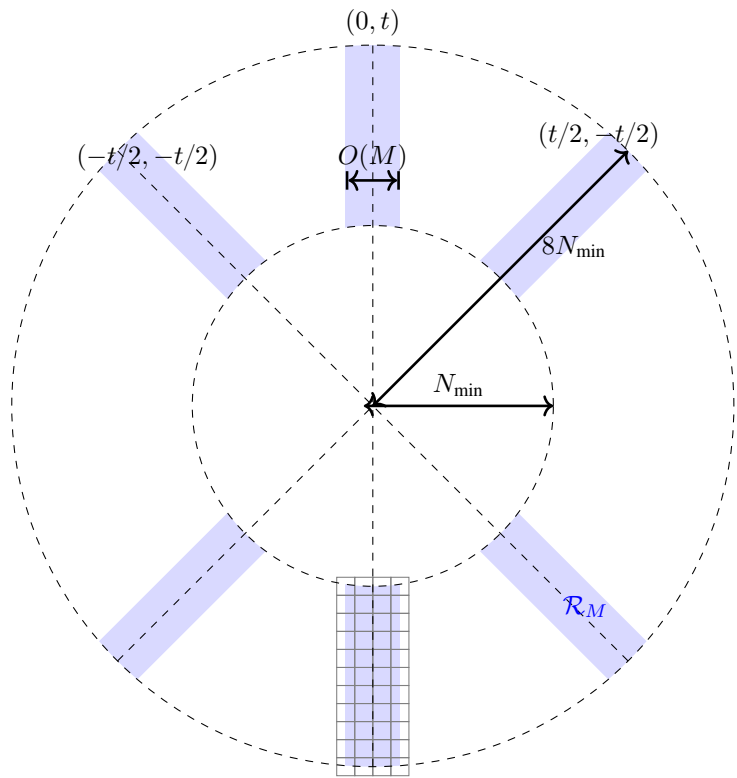

We can cover $\mathcal R_M$ with a family of $O(N_{\rm{min}}/M)$ squares of size $cM\times cM$ that we call $\mathcal B$ organized with a grid-like structure, so that any point is just in one square (up to a measure-zero set). Here $c$ is the small constant from Proposition \ref{prop:refined}. We construct a covering of $S_M$ as follows. For any triplet $B^{(1)}, B^{(2)}, B^{(3)} \in \mathcal B$, we check if there is any $\sigma \in S_M$ with $(\nu, k) \in B^{(1)}$, $(\zeta, m) \in B^{(2)}$ and $(\xi, n) \in B^{(3)}$. If that is the case, we include $(B^{(1)}, B^{(2)}, B^{(3)} )$ in our covering, otherwise not. That way we construct the covering:
\begin{equation} \label{eq:covering}
S_M = \bigcup_{\alpha \in \Upsilon_M} B_\alpha^{(1)} \times B_\alpha^{(2)} \times B_\alpha^{(3)} \times \mathcal S_t.
\end{equation}
Here $\mathcal S_t$ refers to the triples $(\mu, \eta, \tau)$ with $\mu + \eta + \tau = 0$, since we recall that the tuples of $S_M \subset S_0$ have also temporal frequencies as their last components. However, the temporal frequencies will play no role in our decomposition.

We claim that for any box $B^{(1)}_\alpha$ there are finitely many options for $B_\alpha^{(2)} \times B_\alpha^{(3)}$ so that  $B_\alpha^{(1)}\times B_\alpha^{(2)} \times B_\alpha^{(3)} \times \mathcal S_t$ is in the covering. Indeed, let us call $C$ the implicit constant in Corollary \ref{cor:bigbox}. Once $B_\alpha^{(1)}$ is fixed we have that $k$ is in some interval of size $cM$ determined by $B_\alpha^{(1)}$. Call that interval $I_k$. Then, by Corollary \ref{cor:bigbox}, $\zeta, \xi, m, n$ are all on some interval of size $2CM$ around $\pm k/2$, which itself is in $I_k/2$. Thus, every coordinate of $\zeta \in I_\zeta$, $\xi \in I_\xi$, $m \in I_m$, $n\in I_n$, where $I_\zeta, I_\xi, I_m, I_n$ are intervals of size $2CM+cM$ which depend only on $B^{(1)}_\alpha$. Therefore, $I_\zeta \times I_m$ or $I_\xi \times I_n$ are covered by some $\lceil 2C/c + 1\rceil \times \lceil 2C/c+1 \rceil$ square of $cM \times cM$ boxes, meaning there is some absolute constant $C_3 =\lceil 2C/c+1 \rceil^4$ bounding the quantity of pairs of boxes $B^{(2)} \times B^{(3)}$ that have a non-empty intersection with $I_\zeta \times I_m \times I_\xi \times I_n$. Thus, for any box $B^{(1)} \in \mathcal B$, the quantity of times $B^{(1)}$ appears in the covering \eqref{eq:covering} is bounded by the absolute constant $C_3$. By an analogous reasoning, the same happens with $B^{(2)}$ or $B^{(3)}$. 

Lastly, we classify each set of the partition as follows. If there exists some $\sigma \in B^{(1)}_\alpha \times B^{(2)}_\alpha \times B^{(3)}_\alpha \times \mathcal S_t \cap S_0$ such that $|\theta_i - \theta_j | \geq M$, we say that set is an interaction of second-kind and we will be in the hypothesis of Proposition \ref{prop:refined}. Otherwise, it is an interaction of first kind. Note that for all $\sigma \in S_0 \setminus S_1$ we have that $|\theta_i - \theta_j | \geq \frac{N_{\rm{min}}}{1000}$ for some $i,j$, so we see that all the boxes intersecting $S_0 \setminus S_1$ are of second kind. Thus, for interactions of first kind, we have that $ B^{(1)}_\alpha \times B^{(2)}_\alpha \times B^{(3)}_\alpha \times \mathcal S_t \cap S_0 \subset S_1$, and we can apply Lemma \ref{cor:coro}.

All in all, we have partitioned $S_0$ so that: \begin{itemize}
\item $S_0$ is covered by $S_M$ for $M$ being a dyadic number between $\beta = O(1)$ and $\frac{N_{\rm{min}}}{C_2} = O(N_{\rm{min}})$
\item $S_M$ is covered by $O(N_{\rm{min}}/M)$ sets of the type $B_\alpha^{(1)} \times B_\alpha^{(2)} \times B_\alpha^{(3)} \times \mathcal S_t$.
\item Each of the boxes $B^{(i)}_\alpha$ appears less than $C_3$ times in the $M$ covering.
\item Each of the sets $B_\alpha^{(1)} \times B_\alpha^{(2)} \times B_\alpha^{(3)} \times \mathcal S_t$ of the partition is either of type 1 or type 2. If it is of type 2 then it satisfies the hypothesis of Proposition \ref{prop:refined} after possibly rearranging $\hat u (\nu, k, \mu)$, $\hat v (\zeta, m, \eta )$, $\hat w (\xi, n, \tau )$. In the type 1 case, it must happen that all the points satisfy $\Delta \gtrsim N_{\rm{min}}^2 M$ for every $\sigma \in B_\alpha^{(1)} \times B_\alpha^{(2)} \times B_\alpha^{(3)} \times \mathcal S_t \cap S_0$ (by Lemma \ref{cor:coro}). 
\end{itemize}

\subsection{Case \ref{case:Neq}} \label{subsec:det4}

Now let us show that instead of showing \eqref{eq:maindet11}, it suffices to show 
\begin{align} \begin{split} \label{eq:maindet12}
\frac{N_{\rm{min}}^{1/2} }{ L_1^{\frac{1+\delta}{2} } L_2^{\frac{1+\delta}{2} } L_3^{ \frac12 - 3\delta}} &\int_{S_\mathrm{good}} \left| \widehat{P_{B^{(1)}_\alpha} Q_{L_1} u} (\nu, k, \mu) \widehat{ P_{B^{(2)}_\alpha}  Q_{L_2} v} (\zeta, m, \eta)  \widehat{ P_{B^{(3)}_\alpha}  Q_{L_3} w} (\xi, m, \tau) \right| d\sigma \\
& \les_{s, \delta, \eps} N_{\rm{min}}^{\eps / 2} \| u \|_{L^2} \| v \|_{L^2} \| w \|_{L^2},
\end{split} \end{align}
for any dyadic $M$ between $\beta$ and $\frac{N_{\rm{min}}}{C_2}$ and any $\alpha \in \Upsilon_M$. Assume \eqref{eq:maindet12}. Clearly, we can put $P_{B_\alpha^{(i)}}$ projections also on the right hand side, since substituting $u$ with $P_{B^{(1)}_\alpha} u$ does not change the left hand side. Using also that $S_M$ is covered by $B_\alpha^{(1)} \times B_\alpha^{(2)} \times B_\alpha^{(3)} \times \mathcal S_t$ for $\alpha \in \Upsilon_M$, and adding up we get that
\begin{align*}
\frac{N_{\rm{min}}^{1-s}}{ L_1^{\frac{1+\delta}{2} } L_2^{\frac{1+\delta}{2} } L_3^{ \frac12 - 3\delta}}& \int_{S_M} \left| \widehat{ Q_{L_1} u} (\nu, k, \mu) \widehat{   Q_{L_2} v} (\zeta, m, \eta)  \widehat{  Q_{L_3} w} (\xi, m, \tau) \right| d\sigma \les_{s, \delta} \frac{N_{\rm{min}}^{\eps / 2}}{N_{\rm{min}}^{s-1/2}} \sum_{\alpha \in \Upsilon_M} \| P_{B^{(1)}_\alpha}  u \|_{L^2} \| P_{B^{(2)}_\alpha} v \|_{L^2} \|P_{B^{(3)}_\alpha}  w \|_{L^2} \\ 
&\les \frac{ N_{\rm{min}}^{\eps/2} }{N_{\rm{min}}^{s-1/2}} \| u \|_{L^2} \left( \sum_{\alpha \in \Upsilon_M } \| P_{B^{(2)}_\alpha} v \|_{L^2}^2 \right)^{1/2}  \left( \sum_{\alpha \in \Upsilon_M } \|P_{B^{(3)}_\alpha}  w \|_{L^2}^2  \right)^{1/2}\les \frac{ N_{\rm{min}}^{\eps/2} }{N_{\rm{min}}^{s-1/2}} \| u \|_{L^2} \| v \|_{L^2} \| w \|_{L^2}, 
\end{align*}
where in the last inequality we used that $B_\alpha^{(i)}$ covers at most $10C_3$ times each point, as $\alpha \in \Upsilon_M$. Lastly, adding up over the $O(\log N_{\rm{min}})$ possibilities for $M$ between $\beta$ and $\frac{N_{\rm{min}}}{C_2}$, we obtain that 
\begin{equation} 
\frac{N_{\rm{min}}^{1-s}}{ L_1^{\frac{1+\delta}{2} } L_2^{\frac{1+\delta}{2} } L_3^{ \frac12 - 3\delta}} \int_{S_0} \left| \widehat{ Q_{L_1} u} (\nu, k, \mu) \widehat{  Q_{L_2} v} (\zeta, m, \eta)  \widehat{   Q_{L_3} w} (\xi, m, \tau) \right| d\sigma \les_{s, \delta, \eps} \frac{N_{\rm{min}}^{\eps}}{N_{\rm{min}}^{s-1/2}} \| u \|_{L^2} \| v \|_{L^2} \| w \|_{L^2},
\end{equation}
which implies \eqref{eq:maindet11} in case \ref{case:Neq}, since $\sigma$ corresponding to that case has to be in $S_0$. Thus, we have reduced case \ref{case:Neq} to show \eqref{eq:maindet12}.

\subsubsection{Interactions of first kind}
Recall that in interactions of first kind we have that any point of $B^{ (1) }_\alpha \times B^{(2)}_\alpha \times B^{(3)}_\alpha \times \mathcal S_t \cap S_0$ satisfies the hypothesis of Lemma \ref{cor:coro} and therefore $|\Delta| \gtrsim N_{\rm{min}}^2 M$. Note moreover that
\begin{equation*}
|\Delta | = |\Delta + \mu + \eta + \tau | \leq | \mu - \phi(\nu, k) | + | \eta - \phi (\zeta, m) | + | \tau - \phi (\xi, n) | \leq 2L_1 + 2L_2 + 2L_3 \leq 6L_{\rm{max}},
\end{equation*}
and we obtain that $L_{\rm{max}} \gtrsim N_{\rm{min}}^2M$. 

Undoing Fourier Transform (recall $\tilde u = \mathcal F^{-1} | \hat u |$) and using \eqref{eq:refined1} from Proposition \ref{prop:refined}, we get
\begin{align} \nonumber
\Xi &:=\frac{N_{\rm{min}}^{1/2}}{ L_1^{\frac{1+\delta}{2} } L_2^{\frac{1+\delta}{2} } L_3^{ \frac12 - 3\delta}} \int_{S_\mathrm{good}} \left| \widehat{P_{B^{(1)}_\alpha} Q_{L_1} u} (\nu, k, \mu) \widehat{ P_{B^{(2)}_\alpha}  Q_{L_2} v} (\zeta, m, \eta)  \widehat{ P_{B^{(3)}_\alpha}  Q_{L_3} w} (\xi, m, \tau) \right| d\sigma  \\
 &= \frac{N_{\rm{min}}^{1/2}}{ L_1^{\frac{1+\delta}{2} } L_2^{\frac{1+\delta}{2} } L_3^{ \frac12 - 3\delta}}  \int_{\mathbb R} \int_{\mathbb R \times \mathbb T}  P^1_\alpha Q_{L_1} \tilde u (x, t)  P^2_\alpha Q_{L_2} \tilde v (x, t) P^3_\alpha Q_{L_3} \tilde w(x, t)  dx dt \nonumber \\ 
 &\les \frac{N_{\rm{min}}^{1/2} L_{\rm{max}}^{3\delta} }{ L_{\rm{min}}^{\frac{1}{2} } L_{\rm{med}}^{\frac{1}{2} } L_{\rm{max}}^{ \frac12}} M \mathrm{min} \{ L_1, L_2 \}^{1/2} \| u \|_{L^2}\| v \|_{L^2}\| w \|_{L^2}. \label{eq:sandia}
\end{align}
Now, in \eqref{eq:sandia}, we use that $L_{\rm{max}} \gtrsim N_{\rm{min}}^2 M$ and that $\min \{ L_1, L_2 \} \leq L_{\rm{med}}$, to obtain
\begin{align*}
\Xi &\les \frac{N_{\rm{min}}^{1/2} L_{\rm{max}}^{3\delta} }{ M^{1/2} N_{\rm{min}} }  M \| u \|_{L^2}\| v \|_{L^2}\| w \|_{L^2} \les (M/N_{\rm{min}})^{1/2} \cdot N_{\rm{min}}^{ 9 \delta} \| u \|_{L^2}\| v \|_{L^2}\| w \|_{L^2} \\
&\les N_{\rm{min}}^{\eps /2}  \| u \|_{L^2}\| v \|_{L^2}\| w \|_{L^2} ,
\end{align*}
where used $M \les N_{\rm{min}}$, $L_{\rm{max}} \les N_{\rm{min}}^3$ and $\delta \ll \eps$. This concludes the proof of \eqref{eq:maindet12} for interactions of first kind.

\subsubsection{Interactions of second kind}
For interactions of second kind, we can use \eqref{eq:refined2} from Proposition \ref{prop:refined} for some pair of the frequencies (the one such that $|\theta_i - \theta_j | \geq M$). Undoing Fourier Transform (recall $\tilde u = \mathcal F^{-1} | \hat u |$) and using \eqref{eq:refined1}, we get
\begin{align*} \nonumber
\Xi &:=\frac{N_{\rm{min}}^{1/2}}{ L_1^{\frac{1+\delta}{2} } L_2^{\frac{1+\delta}{2} } L_3^{ \frac12 - 3\delta}} \int_{S_\mathrm{good}} \left| \widehat{P_{B^{(1)}_\alpha} Q_{L_1} u} (\nu, k, \mu) \widehat{ P_{B^{(2)}_\alpha}  Q_{L_2} v} (\zeta, m, \eta)  \widehat{ P_{B^{(3)}_\alpha}  Q_{L_3} w} (\xi, m, \tau) \right| d\sigma  \\
 &= \frac{N_{\rm{min}}^{1/2}}{ L_1^{\frac{1+\delta}{2} } L_2^{\frac{1+\delta}{2} } L_3^{ \frac12 - 3\delta}}  \int_{\mathbb R} \int_{\mathbb R \times \mathbb T}  P^1_\alpha Q_{L_1} \tilde u (x, t)  P^2_\alpha Q_{L_2} \tilde v (x, t) P^3_\alpha Q_{L_3} \tilde w(x, t)  dx dt \nonumber \\ 
 &\les \frac{N_{\rm{min}}^{1/2} L_{\rm{max}}^{3\delta} }{ L_{1}^{\frac{1}{2} } L_{2}^{\frac{1}{2} } L_{3}^{ \frac12}}  \frac{L_i^{1/2} L_j^{1/2}}{N_{\rm{min}}^{1/2}} \| u \|_{L^2}\| v \|_{L^2}\| w \|_{L^2} \nonumber \\ 
  &\les  N_{\rm{min}}^{9\delta}   \| u \|_{L^2}\| v \|_{L^2}\| w \|_{L^2}.
\end{align*}
This proves \eqref{eq:maindet12} in the case of interactions of second kind, since $\delta \ll \eps$. Thus, we the conclude the proof of \eqref{eq:maindet11} for case \ref{case:Neq}, and therefore we have proven \eqref{eq:maindet1} from Proposition \ref{prop:maindet}.

\subsection{Bad interactions} \label{subsec:det5}
The objective of this section is to deal with the bad interactions, showing \eqref{eq:maindet2} and \eqref{eq:maindet3} in Proposition \ref{prop:maindet}. Let us recall that $S_{\rm{bad}}$ is the region where
\begin{align} \label{eq:ravioli2}
|\nu | &\leq \frac{1}{10}, \qquad |\zeta + k/2 |,  | \xi - k/2 | \leq \frac{1}{10}, \qquad \mbox{ and } \qquad m = n = -k/2, \\
&\mbox{ and } \qquad | \mu - \phi(\nu, k_2)|, | \eta - \phi(\zeta, m_2) |, | \tau - \phi(\xi, n_2) | \leq 2048|k|^3 \notag
\end{align}
up to rearrangement of $(\nu, k)$, $(\zeta, m)$ and $(\xi, n)$.

Instead of showing \eqref{eq:maindet2}--\eqref{eq:maindet3}, recall that we can show its frequency localized versions \eqref{eq:maindet21}--\eqref{eq:maindet31} from Remark \ref{rem:freq_loc}. Let us note that in $S_{\rm{bad}}$ we have that $\frac{ \langle \xi \rangle }{\langle  \nu \rangle \cdot \langle \zeta \rangle } \lesssim 1$, $N_i \approx N_{\rm{min}} \approx N$ and $L_\mathrm{max} \leq 2048N_{\rm{min}}^3 \les N^3 $. Using that, we see that in order to show \eqref{eq:maindet21} it suffices to show:
\begin{align}\label{eq:maindet22}
& \frac{N^{-s + 9\delta} }{ L_1^{\frac12} L_2^{\frac12}L_3^{\frac12} } \int_{S_\mathrm{bad}} \left|  |\nu|^{1/2} | \zeta |^{1/2} | \xi |^{1/2}  \widehat{ P_{N_1} Q_{L_1} u }(\nu, k, \mu) \widehat{ P_{N_2} Q_{L_2} v} (\zeta, m, \eta) \widehat{ P_{N_3} Q_{L_3} w}(\xi, n, \tau) d\sigma \right|  \\
 \qquad &\les \frac{N^{\eps}}{N^{s-1/2}} \| u  \|_{L^2} \| u \|_{L^2} \| w \|_{L^2},
\end{align}
for all $s > 1/2$ and $\delta > 0$ sufficiently small. Similarly, instead of showing \eqref{eq:maindet31}, it suffices to show
\begin{equation} \label{eq:maindet32}
  \frac{N^{1-s + 9\delta} }{ L_1^{\frac12} L_2^{\frac12}L_3^{\frac12} }  \int_{S_\mathrm{bad}} \left| \widehat{ P_{N_1} Q_{L_1} u }(\nu, k, \mu) \widehat{ P_{N_2} Q_{L_2} v} (\zeta, m, \eta) \widehat{ P_{N_3} Q_{L_3} w}(\xi, n, \tau)   d\sigma \right|  \les \frac{N^\eps}{N^{s-1/2}}\| u  \|_{L^2} \| v \|_{L^2} \| w \|_{L^2},
\end{equation}
for $s > 3/4$ and $\delta > 0$ sufficiently small.

Now, since both estimates \eqref{eq:maindet22}--\eqref{eq:maindet32} are symmetric with respect to rearrangement of $u, v, w$, and the region $S_{\mathrm{bad}}$ is also symmetric with respect to rearrangement of $(\nu, k)$, $(\zeta, m)$ and $(\xi, n)$, we can assume without loss of generality that we are exactly in the situation of \eqref{eq:ravioli2}, with no rearrangement needed there. Using that, we can further reduce \eqref{eq:maindet22}--\eqref{eq:maindet32} to show
\begin{equation} \label{eq:ohcapitan}
\frac{N^{1/2}}{ L_1^{\frac12} L_2^{\frac12}L_3^{\frac12} } \mathcal I_i \les N^{\eps / 2}\| u  \|_{L^2} \| v \|_{L^2} \| w \|_{L^2},
\end{equation}
where 
\begin{align*}
\mathcal I_i &= \int_{\mathbb{R}^2} d\mu d\eta \int_{-\frac{1}{10}}^{\frac{1}{10}}  \int_{-\frac{1}{10}}^{\frac{1}{10}} d\nu d\omega \sum_{N_{\rm{min}} \leq k < 8N_{\rm{min}}} \\
&\qquad |\nu|^{\gamma_i} \left| \widehat{Q_{L_1} u}(\nu, k, \mu) \widehat{Q_{L_2} v} (\omega -k/2, -k/2, \eta) \widehat{Q_{L_3} w}(-\nu - \omega + k/2, -k/2, -\mu-\eta) \right|,
\end{align*}
and $\gamma_i = 1/2$ for $i=1$ (corresponding to \eqref{eq:maindet22}) and $\gamma_i = 0$ for $i=2$ (corresponding to \eqref{eq:maindet32}). Here, we are using the change of variables $\omega - k/2 = \zeta$, so that $|\omega | \leq \frac{1}{10}$ by \eqref{eq:ravioli2}. We use Cauchy-Schwarz in the $\mathbb{R}^2$ integral and obtain that 
\begin{align*}
\mathcal I_i &\les \int_{-\frac{1}{10}}^{\frac{1}{10}}  \int_{-\frac{1}{10}}^{\frac{1}{10}}  \sum_{N_{\rm{min}} \leq k < 8N_{\rm{min}}} 
\Bigg[
\left( \int_{\mathbb R^2}  |\widehat{Q_{L_2} v} (\omega -k/2, -k/2, \eta) |^2  | \widehat{Q_{L_3} w}(-\nu - \omega + k/2, -k/2, -\mu-\eta)  |^2  d\mu  d\eta\right)^{1/2} \\ 
&\quad \left( \int_{\mathbb R^2} |\widehat{Q_{L_1} u}(\nu, k, \mu) |^2   \mathbbm{1}_{| \eta - \phi ( \omega -k/2, -k/2 ) | \leq 2L_2} \mathbbm{1}_{ | \eta + \mu + \phi ( -\nu - \omega + k/2, -k/2 ) | \leq 2L_3}d\mu  d\eta \right)^{1/2}  |\nu|^{\gamma_i} d\nu d\omega
\Bigg]
\end{align*}
where we used the restrictions coming from $|\eta - \phi(\zeta,m) | \leq 2L_2$ and $|\tau - \phi (\xi, n) | \leq 2L_3$. We see that in the last integral, for every $\mu \in \mathbb R$, we have that $\eta$ ranges over an interval of size smaller than $\min \{ 4L_2, 4L_3 \} \leq 4L_{\rm{med}}$. Therefore, we have
\begin{equation} \label{eq:castilloif}
\mathcal I_i \les \int_{-\frac{1}{10}}^{\frac{1}{10}}  \int_{-\frac{1}{10}}^{\frac{1}{10}}  \sum_{N_{\rm{min}} \leq k < 8N_{\rm{min}}} L_{\rm{med}}^{1/2} |\nu|^{\gamma_i} \| \hat u(\nu, k, \mu) \|_{L^2_\mu} \| \hat v (\omega-k/2, -k/2, \eta ) \|_{L^2_\eta} \| \hat w (-\nu-\omega + k/2, -k/2, \tau )\|_{L^2_\tau}  d\nu d\omega
\end{equation}

We compute $\Delta$ in $k, \nu, \omega$ variables for elements of $S_\mathrm{bad}$:
\begin{align*}
\Delta &= \nu (\nu^2 + k^2) + \zeta ( \zeta^2 + (k/2)^2) + \xi (\xi^2 + (k/2)^2) \\ 
&= \nu^3 + \zeta^3 - (\nu + \zeta)^3 + \nu k^2 + \zeta (k/2)^2 + \xi (k/2)^2 \\
&= -3 \nu \zeta (\nu + \zeta) + \frac34 \nu k^2 = -3 \nu (-k/2 + \omega) (\nu - k/2 + \omega) + \frac34 \nu k^2 \\
&= -\frac34 \nu k^2 - 3 \nu \omega (-k/2) - 3 \nu (-k/2) (\nu + \omega) + \frac34 \nu k^2 + O(| \nu |^3 + | \omega |^3 ) \\
&=  3 \nu \omega k + \frac32 k \nu^2 + O(| \nu |^3 + | \omega |^3 ).
\end{align*}
Given that $|\nu|, |\omega| \les 1$ and $\langle \Delta \rangle \approx \langle \Delta + 1 \rangle$, we get that
\begin{equation} \label{eq:cacahuete}
\langle \Delta \rangle \approx  \langle k \nu \left( \omega + \frac{\nu}{2} \right) \rangle .
\end{equation} 

Now, we treat $\mathcal I_i$ in the two cases $i=1$, $i=2$; corresponding to the proof of \eqref{eq:maindet2} and \eqref{eq:maindet3}.

\subsubsection{Bounds on $\mathcal I_1$}

We do dyadic partition of the square $[-1/10, 1/10]^2$ again with respect to $( \omega + \frac{\nu}{2} )$, truncated at $\Gamma = N^{-1}$. Formally, let $\Gamma$ be a power of two (of negative exponent) with $N^{-1} \leq \Gamma \leq 1$ and such that $\Gamma \leq  \left| \omega + \frac{\nu}{2} \right| < 2 \Gamma$, or in the case $\Gamma = N^{-1}$ simply $\left| \omega + \frac{\nu}{2} \right| < 2N^{-1}$. Let $A_\Gamma$ be the points $(\nu, \omega ) \in [-1/10, 1/10]^2$ with that associated value of $\Gamma$. From \eqref{eq:cacahuete}, we get that
\begin{equation} \label{eq:cacahuete2}
\langle \Delta \rangle \gtrsim N |\nu| \Gamma.
\end{equation}
This clearly holds for $\Gamma > N^{-1}$ because $\Gamma \approx  \left| \omega + \frac{\nu}{2} \right|$, but note it also holds $\Gamma = N^{-1}$ because the right hand side above is smaller than $1$. Combining \eqref{eq:cacahuete2} with
\begin{equation} \label{eq:cacahuete3}
\Delta = \Delta - \mu - \eta - \tau \leq | \phi(\nu, k) -\mu| + |\phi(\zeta, m) - \eta | + |\phi(\xi, n) - \tau | \leq 6 L_{\rm{max}}
\end{equation}
we deduce
\begin{equation} \label{eq:mar}
|\nu|^{1/2}\les \frac{L_{\rm{max}}^{1/2}}{N^{1/2} \Gamma^{1/2}}.
\end{equation}

Now, we split the integrals $\mathcal I_1 = \sum_{\Gamma} \sum_k \mathcal I_1^{k, \Gamma}$ according to the values of $k$ and $\Gamma$. From \eqref{eq:mar}, we have the bound 
\begin{align}
\mathcal I_1^{k, \Gamma} &\les   \frac{ L_{\rm{max}}^{1/2} L_{\rm{med}}^{1/2} }{N^{1/2} \Gamma^{1/2} } \int_{A_\Gamma} \| \hat u(\nu, k, \mu) \|_{L^2_\mu} \| \hat v (\omega-k/2, -k/2, \eta ) \|_{L^2_\eta} \| \hat w (-\nu-\omega + k/2, -k/2, \tau )\|_{L^2_\tau}  d\nu d\omega \nonumber \\ 
 &\les   \frac{ L_{\rm{max}}^{1/2} L_{\rm{med}}^{1/2} }{N^{1/2} \Gamma^{1/2} } \left( \int_{-\frac{1}{10}}^{-\frac{1}{10}} \| \hat u(\nu, k, \mu) \|_{L^2_\mu} \int_{-\frac{1}{10}}^{-\frac{1}{10}}  \mathbbm{1}_{(\nu, \omega) \in A_\Gamma}   d\omega d\nu \right)^{1/2} \\
 &\qquad \cdot \left( \int_{A_\Gamma} \| \hat v (\omega-k/2, -k/2, \eta ) \|_{L^2_\eta}^2 \| \hat w (-\nu-\omega + k/2, -k/2, \tau )\|_{L^2_\tau}^2  d\nu d\omega \right)^{1/2} \nonumber
\end{align}
Now, observe that for any fixed $\nu \in [-1/10, 1/10]$, the set of $\omega$ satisfying $|\omega + \nu/2| \leq 2\Gamma$ is an interval of size $4\Gamma$. Therefore, the integral of $\mathbbm{1}_{(\nu, \omega) \in A_\Gamma}$ along the $d\omega$ direction is bounded by $4\Gamma$. Therefore, we obtain
\begin{align*}
\mathcal I_1^{k, \Gamma} \les  \frac{ L_{\rm{max}}^{1/2} L_{\rm{med}}^{1/2} }{N^{1/2}} \| \hat u(\nu, k, \mu) \|_{L^2_{\nu, \mu}} \| \hat v (\zeta, -k/2, \eta ) \|_{L^2_{\zeta, \eta}} \| \hat w (\xi, -k/2, \tau )\|_{L^2_{\xi, \tau}}
\end{align*}
Lastly, adding with respect to $\Gamma$ and $k$, and noting that there are $O(\log (N) ) $ possible values of $\Gamma$, we get
\begin{align*}
\mathcal I_1 &= \sum_{k = N}^{8N} \sum_{\Gamma} \mathcal I_1^{k, \Gamma} \les \frac{ L_{\rm{max}}^{1/2} L_{\rm{med}}^{1/2} \log (N)}{N^{1/2}} \sum_{k = N_{\rm{min}}}^{8N_{\rm{min}}}  \| \hat u(\nu, k, \mu) \|_{L^2_{\nu, \mu}} \| \hat v (\zeta, -k/2, \eta ) \|_{L^2_{\zeta, \eta}} \| \hat w (\xi, -k/2, \tau )\|_{L^2_{\xi, \tau}} \\ 
&\les \frac{ L_{\rm{max}}^{1/2} L_{\rm{med}}^{1/2} \log (N)}{N^{1/2}} \left( \sum_{k = N_{\rm{min}}}^{8N_{\rm{min}}}  \| \hat u(\nu, k, \mu) \|_{L^2_{\nu, \mu}}^2 \right)^{1/2} \left( \sum_{k = N_{\rm{min}}}^{8N_{\rm{min}}} \| \hat v (\zeta, -k/2, \eta ) \|_{L^2_{\zeta, \eta}}^2 \right)^{1/2} \max_{N_{\rm{min}}\leq k \leq 8N_{\rm{min}}}\| \hat w (\xi, -k/2, \tau )\|_{L^2_{\xi, \tau}}\\
&\les \frac{ L_{\rm{max}}^{1/2} L_{\rm{med}}^{1/2} \log (N)}{N^{1/2}} \| u \|_{L^2} \| v \|_{L^2} \| w \|_{L^2}
\end{align*}
This shows \eqref{eq:ohcapitan} for $\mathcal I_1$, for $s > 1/2$ and $\delta > 0$ sufficiently small, concluding the proof of \eqref{eq:maindet2}.

\subsubsection{Bounds on $\mathcal I_2$}
The procedure is similar to before, but we need to do a different partition of the region. The reason is that we no longer have the factor $|\nu|^{1/2}$, so the problematic region is the case where $\nu (\omega + \nu/2)$ is small (rather than $\omega + \nu/2$ being small as before). Thus, we let $\Theta$ to be the dyadic value such that $\Theta \leq |\nu (\omega + \nu/2)| \leq 2 \Theta$ truncated at $\Theta = N^{-1}$ (so we only ask for $| \nu (\omega + \nu/2) | \leq 2 N^{-1}$ in the case $\Theta = N^{-1}$). We divide the square $[-1/10, 1/10]$ accordingly in regions $B_\Theta$. We also divide the integral $\mathcal I_2 = \sum_k \sum_\Theta \mathcal I_2^{k, \Theta}$ according to the values of $k$ and $\Theta$.

Let us also note that the measure of $\{ \nu \mbox{ s.t. } (\nu, \omega) \in B_\Theta \}$ is bounded by $10\Theta^{1/2}$ uniformly in $\omega$. The reason is that, if $| \nu (\omega + \nu / 2) | \leq 2\Theta$, it must happen that either $| \nu | \leq (2\Theta)^{1/2}$ or that $|\omega + \nu/2| \leq (2\Theta)^{1/2}$. From \eqref{eq:castilloif}, using Cauchy-Schwarz, we have:
\begin{align}
\mathcal I_2^{k, \Theta} &\les \int_{B_\Theta}   L_{\rm{med}}^{1/2} \| \hat u(\nu, k, \mu) \|_{L^2_\mu} \| \hat v (\omega-k/2, -k/2, \eta ) \|_{L^2_\eta} \| \hat w (-\nu-\omega + k/2, -k/2, \tau )\|_{L^2_\tau}  d\nu d\omega \nonumber \\ 
&\les   L_{\rm{med}}^{1/2} 
\left( \int_{-\frac{1}{10}}^{\frac{1}{10}}\int_{-\frac{1}{10}}^{\frac{1}{10}}  \| \hat v (\omega-k/2, -k/2, \eta ) \|_{L^2_\eta}^2 \mathbbm{1}_{B_\Theta} (\nu, \omega) d\nu d\omega \right)^{1/2} \nonumber \\
&\qquad \cdot \left( \int_{B_\Theta} \| \hat u(\nu, k, \mu) \|_{L^2_\mu}^2 \| \hat w (-\nu-\omega + k/2, -k/2, \tau )\|_{L^2_\tau}^2  d\nu d\omega  \right)^{1/2} \nonumber \\ 
&\les L_{\rm{med}}^{1/2} \Theta^{1/4} \| \hat u (\nu, k, \mu)\|_{L^2_{\nu, \mu}} \| \hat v (\zeta, -k/2, \eta)\|_{L^2_{\zeta, \eta}} \| \hat w (\xi, -k/2, \tau)\|_{L^2_{\xi, \tau}} \label{eq:cala}
\end{align}

Now, from \eqref{eq:cacahuete} that $\langle \Delta \rangle \approx N \Theta$, and using \eqref{eq:cacahuete3}, we get that $\Theta \les \frac{L_{\rm{max}}}{N}$. Using that in \eqref{eq:cala}, together with the fact that there are $O(\log (N))$ possible values for $\Theta$ (takes dyadic values between $N^{-1}$ and $1$) we get
\begin{align*}
\mathcal I_2 &= \sum_{k = N_{\rm{min}}}^{8N_{\rm{min}}} \sum_{\Theta} \mc I_2^{k, \Theta} \les \frac{ L_{\rm{med}}^{1/2} L_{\rm{max}}^{1/4} \log (N) }{N^{1/4}} \sum_{k = N_{\rm{min}}}^{8N_{\rm{min}}} \| \hat u (\nu, k, \mu)\|_{L^2_{\nu, \mu}} \| \hat v (\zeta, -k/2, \eta)\|_{L^2_{\zeta, \eta}} \| \hat w (\xi, -k/2, \tau)\|_{L^2_{\xi, \tau}} \\
&\les \frac{ L_{\rm{med}}^{1/2} L_{\rm{max}}^{1/4} \log (N) }{N^{1/4}} \left( \sum_{k = N_{\rm{min}}}^{8N_{\rm{min}}} \| \hat u (\nu, k, \mu)\|_{L^2_{\nu, \mu}}^2 \right)^{1/2} \left( \sum_{k=N_{\rm{min}}}^{8N_{\rm{min}}} \| \hat v (\zeta, -k/2, \eta)\|_{L^2_{\zeta, \eta}}^2 \right) \max_{N_{\rm{min}} \leq k \leq 8N_{\rm{min}}} \| \hat w (\xi, -k/2, \tau)\|_{L^2_{\xi, \tau}} \\
&\les \frac{ L_{\rm{med}}^{1/2} L_{\rm{max}}^{1/4} \log (N) }{N^{1/4}} \| u \|_{L^2} \| v \|_{L^2} \| w \|_{L^2}.
\end{align*}
We see that \eqref{eq:ohcapitan} is satisfied for $\mathcal I_2$ as long as $s > 3/4$ and $\delta > 0$ is sufficiently small. This concludes the proof of \eqref{eq:maindet3}

\subsection{Proof of Theorem \ref{th:det_fail}} \label{subsec:det_fail} \label{subsec:det6}

The counterexamples to the inequalities in \eqref{eq:one1} are based on the bad interactions that we have seen in the previous subsection. We fix some $N$ large. We will work with $k = N$, $m = n = -N/2$. We will also assume functions that are close to linear solutions, meaning they are supported on $| \mu - \phi_1 | \leq C$ and $| \eta - \phi-2 | \leq C$ for some constant $C$, independent of $N$, to be fixed later. We recall the variable $\omega = \zeta + N/2$, and define $\rho = \xi - N/2$. We will work in the regime $|\nu| , |\omega |, \leq 1$, so that $|\rho | = |-\nu - \omega | \leq 2$. We define  
\begin{equation*}
\hat u (\nu, k, \mu) = \mathbbm{1}_{k = N} \mathbbm{1}_{R_1}(\nu) \mathbbm{1}_{|\mu - \phi_1| \leq C}, \qquad \hat v \left( \frac{-N}{2} + \omega , m, \eta \right) = \mathbbm{1}_{m = -N/2} \mathbbm{1}_{R_2}(\omega)  \mathbbm{1}_{| \eta - \phi_2 | \leq C}.
\end{equation*}
From now on, we assume $k = N$ and $m = -N/2$ and remove those conditions from the definitions of $u, v$. We take the regions $R_1, R_2 \subset [-1, 1]$ to be fixed later. 

We have that
\begin{equation*}
-i \widehat{ \p_x (u v) } \left( \xi, -n, -\tau \right) = -\xi  \int_{R_1}  \mathbbm{1}_{R_2} (-\rho-\nu) \int_{\phi_1 - 1}^{\phi_1 + 1}  \mathbbm{1}_{| -\mu-\tau - \phi_2 | \leq C} d\mu d\nu
\end{equation*}
where $\rho = N/2 - \xi$. We also notice that the right hand side is positive since $\xi = -N/2 -\rho \approx - N/2$. It is clear that when $|\mu - \phi_1| \leq C/2$ and $|-\phi_1 - \phi_2 - \tau| \leq C/2$ the condition on the last integral is satisfied. Therefore, we obtain the lower bound
\begin{align*}
-i \widehat{ \p_x (u v) } (-\rho-N/2, -n, -\tau) &\geq \frac{N}{4}   \int_{R_1}  \mathbbm{1}_{R_2} (-\rho-\nu)  \mathbbm{1}_{|- \tau - \phi_1 - \phi_2 | \leq C} d\nu
\end{align*}
Now, we focus on evaluating the left hand side for $| \tilde \tau | < 1$. In that case, the rightmost indicator function in the integral above is bounded by $\mathbbm{1}_{| \phi_1 + \phi_2 + \phi_3 | \leq C-1}$.  We also recall \eqref{eq:cacahuete}, we have: $\langle \Delta \rangle \approx  \langle  N \nu  ( \omega + \nu/2) \rangle $. We now fix $C$ sufficiently large so that the last condition inside the integral is implied by $ N |\nu|  | \omega + \nu/2| \leq 1$. Therefore, we obtain 
\begin{align*}
-i \widehat{ \p_x (u v) } (-(N/2 + \rho), -n_2, -\tau) &\geq
\frac{N}{4} \mathbbm{1}_{| \tilde \tau | \leq 1} 
\int_{R_1} \mathbbm{1}_{R_2} ( -\rho-\nu) \mathbbm{1}_{| \nu  | \cdot | \rho + \nu / 2| \leq 1/N} d\nu
\end{align*}
In particular, taking $R_1 = [-N^{-1/2}, N^{-1/2}]$ and $R_2 = [-N^{-1/2}, N^{-1/2}]$, we obtain that the integral above is greater than $N^{-1/2}$ for $| \rho | \leq N^{-1/2}$. That yields $\| \p_x (u v) \|_{X^{s, b}} \gtrsim N^{-1/4} N^{s} N^{1/2} = N^{s+1/4}$, while $u, v$ have norms with product $ N^{2s} |R_1|^{1/2} |R_2|^{1/2} = N^{2s-1/2}$. We see that the inequality $N^{s+1/4} \lesssim N^{2s-1/2}$ breaks for $s < 3/4$ by taking $N$ sufficiently large. That concludes the counterexample for $X^{s, b}$ spaces

In the case of $Y^{s, b}$ spaces, we take $R_1 = \frac12 + [-\frac{1}{2N}, \frac{1}{2N}]$ and $R_2 = -\frac14 + [-\frac{1}{N}, \frac{1}{N}]$. That way, $\| u \|_{Y^{s, b}} \| v \|_{Y^{s, b}} \approx N^{2s-1}$. We also see that when $\rho \in \frac14 + [-\frac{1}{2N}, \frac{1}{2N}]$ and $\nu \in R_1$, we have both that $-\rho - \nu \in R_2$ and $|\nu| \cdot |\rho + \nu / 2 | \leq 1$. Therefore, if, we have that
$$
-i \widehat{ \p_x (u v) } (-(N/2 + \rho), -n_2, -\tau) \geq
\frac{1}{4} \mathbbm{1}_{| \tilde \tau | \leq 1}  \mathbbm{1}_{\rho \in \frac14 + [-\frac{1}{2N}, \frac{1}{2N}]}.
$$
This implies $\| \p_x (uv ) \|_{Y^{s, b}} \gtrsim N^{-1/2+s}$. Since $N^{-1/2+s} \les N^{2s-1}$ only holds for $s \geq 1/2$, we deduce that second inequality from \eqref{eq:one1} must fail for $s < 1/2$.

\section{Proof of Theorem \ref{th:probabilistic}} \label{sec:prob}

Let us recall from the introduction that the proof of Theorem \ref{th:probabilistic} follows from the inequalities in \eqref{eq:beck1}. Before describing the organization of the section, let us split the inequalities from \eqref{eq:beck1} according to the different contributions to the $Z^{s, b}$ norm.

First, similarly to the deterministic case, we choose $b = \frac12 + \delta$, and recall that $\delta$ will be sufficiently small depending of $\alpha$, $s$. We start by removing some of the time localizations in \eqref{eq:beck1}. Indeed, notice that the estimates in \eqref{eq:beck1} can be deduced from
\begin{align} \label{eq:beck15} \begin{split}
		&\left\| \p_{x_1}(v w) \right\|_{Z^{s, -\frac12+2\delta}} \les  \| v \|_{Z^{s, \frac12 + \delta}} \| w \|_{Z^{s, \frac12 + \delta}}, \qquad
		\left\|  \p_{x_1}(v S(t)u_{0, r}^\omega )  \right\|_{Z^{s, -\frac12+2\delta}_T} \les \| v \|_{Z^{s, \frac12 + \delta}_T}, \\
		&\left\|  \p_{x_1}( (S(t)u_{0, r}^\omega)^2 )  \right\|_{Z^{s, -\frac12+2\delta}_T} \les 1, 
		\qquad \forall v, w \in Z^{s, \frac12 + \delta}_T,
\end{split} \end{align}
by applying \eqref{eq:beck15} to some $v', w' \in Z^{s, \frac12 + \delta}$, extensions of $v, w$, such that $\| v' \|_{Z^{s, \frac12 + \delta}} \leq 2 \| v\|_{Z^{s, \frac12}_T}$ and $\| w' \|_{Z^{s, \frac12 + \delta}} \leq 2 \| w \|_{Z^{s, \frac12 + \delta}_T}$ (such extensions exists by definition of the space, see \eqref{eq:localized_spaces}).

First, let us focus on the first inequality in \eqref{eq:beck15}. Using the definition of $Z^{s, b}$ norm, we see that for $s > 1/2$ the inequality follows from the deterministic estimate (equation \eqref{eq:one1}) combined with the following $L^\infty$ estimate:
	\begin{equation} \label{eq:Linfty_1}
		\| \langle \tau - \phi (\xi, n_2) \rangle^{-\frac12+2\delta} \widehat{ \langle \p_x \rangle (u v ) } \|_{L^\infty_{\xi, n_2} (L^2_\tau )} \les \| u \|_{Z^{s, \frac12 + \delta}} \| v \|_{Z^{s, \frac12 + \delta}}.
	\end{equation} 
	We recall that $\langle \p_{x} \rangle$ is the operator with Fourier multiplier $\langle \xi \rangle$ which already includes the weight $\frac{\langle \xi \rangle}{| \xi |}$ (applied to a derivative $\p_x$). We stress that our notation $L^\infty_{\xi, n_2} (L^2_\tau )$ means that the $L^2_\tau$ norm is performed first, and the $L^\infty_{\xi, n_2}$ afterwards. 

Now, we focus on the other two estimates from \eqref{eq:beck15}, expressing those estimates in terms of $u_0^\omega$, before taking the real part. Using that $\phi (\xi, n_2)$ is odd with respect to the first variable, we have that $S(t) \widebar{ u_{0}^\omega } = \widebar{ S(t) u_{0}^\omega }$. Therefore
\begin{align*}
	\p_{x_1} ((S(t) u_{0, r}^\omega)^2) &= \frac12\p_{x_1} \left[ (S(t) u_{0}^\omega)^2 \right] + \p_{x_1} \left[ (S(t) u_{0}^\omega) (\widebar{ S(t) u_{0}^\omega } ) \right] +  \frac12 \p_{x_1} \left[ (\widebar{ S(t) u_{0}^\omega })^2 \right],\\
	\p_{x_1} (vS(t)u_{0, r}^\omega) &= \p_{x_1} (v S(t)u_{0}^\omega ) + \p_{x_1} \left( v S(t) \widebar{ u_{0}^\omega } \right).
\end{align*}
so that we can reduce the last two estimates in \eqref{eq:beck15} to the estimates:
\begin{align} \begin{split} \label{eq:beck25}
		\left\|  \p_{x_1}(v S(t)u_{0}^\omega )  \right\|_{Z^{s, -\frac12+2\delta}_T} &\les \| v \|_{Z^{s, b}}\qquad \forall v \in Z^{s, \frac12 + \delta}, \\
		\left\|  \p_{x_1}( (S(t)u_{0}^\omega)^2 )  \right\|_{Z^{s, -\frac12+2\delta}_T} &\les 1, \qquad 
		\left\|  \p_{x_1}( S(t)u_{0}^\omega \widebar{ S(t)u_{0}^\omega } )  \right\|_{Z^{s, -\frac12+2\delta}_T} \les 1.
\end{split} \end{align}
With respect to the first estimate, the main part is the one corresponding to the $Y^{s, b}$ part of the $Z^{s, b}$ norm, which is proved in the following proposition.

\begin{proposition} \label{prop:mixed} Fix $\alpha < \frac{25}{26}$ and $s > 1/2$. There exists $\delta > 0$ sufficiently small such that
	\begin{equation}
		\left\|  \p_{x_1}(v S(t)u_{0}^\omega )  \right\|_{Y^{s, -\frac12+2\delta}_T} \les \| v \|_{Z^{s, \frac12 +\delta}}\qquad \forall v \in Z^{s, \frac12 + \delta}
	\end{equation}
	for almost every $\omega$. The implicit constant in $\les$ is allowed to depend on our parameters and $\omega$, but not on $v$.
\end{proposition}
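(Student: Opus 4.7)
The plan is to argue by duality. Since $Y^{s, -1/2+2\delta}$ is dual to $Y^{-s, 1/2-2\delta}_\ast$ with respect to the $L^2$ pairing, it suffices to bound the trilinear integral on the constraint set $S$ obtained by testing $\partial_{x_1}(v\, S(t)u_0^\omega)$ against an arbitrary $w \in Y^{-s, 1/2-2\delta}_\ast$. The decisive simplification is that $S(t)u_0^\omega$ is a linear solution, so its space-time Fourier transform is supported on the paraboloid $\{\eta = \phi(\zeta, m_2)\}$; the modulation $\tilde\eta$ vanishes identically and effectively $L_2 = 1$ in the dyadic decomposition. This turns the problem into a genuine bilinear-in-$(v, w)$ estimate with $\widehat{u_0^\omega}$ playing the role of a weight, and I would extract the relevant factors (derivative loss $|\xi|$, regularity weights $\langle(\xi, n_2)\rangle^s$, modulation weights $\langle \tau - \phi_3\rangle^{-1/2+2\delta}$ and $\langle \mu - \phi_1\rangle^{-1/2-\delta}$, and the singular weight $\frac{\langle \xi\rangle}{|\xi|}$ coming from the dual space) in the same fashion as in \eqref{eq:three}.

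Next, I would exploit the randomness. Combining subgaussian tail estimates for the $g_k(\omega)$ with a Borel--Cantelli argument yields that almost surely $|g_k(\omega)| \lesssim_\omega \langle k\rangle^{\varepsilon'}$ for any small $\varepsilon' > 0$, hence the pointwise bound $\|\widehat{P_k u_0^\omega}\|_{L^\infty} \lesssim_\omega \langle k\rangle^{-\alpha + \varepsilon'}$. The essential feature is that randomness upgrades the typical $L^2$-based control on $u_0^\omega$ to an $L^\infty$-based one, losing only an arbitrarily small power of $\langle k\rangle$. I would then perform a dyadic decomposition indexed by $N_1, N_2, N_3$ (frequency sizes of $v$, $u_0^\omega$, $w$) and $L_1, L_3$ (modulations of $v$, $w$). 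Outside the high-low resonance highlighted in the outline, each dyadic piece is controlled by placing $\widehat{u_0^\omega}$ in $L^\infty$ and running the deterministic bilinear machinery of Section \ref{sec:deterministic} (in particular Proposition \ref{prop:refined} and the triangle configuration analysis) on the remaining pair; the decay $\langle N_2\rangle^{-\alpha}$ then absorbs the loss from working below $H^1$ for the randomized data.

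The remaining obstruction, and the main technical step, is the problematic high-low resonance: $\hat v$ concentrated on a subset of low frequencies near the line $\sqrt 3\, \zeta = m_2$ of measure $\sim N^{-2}$, interacting with $u_0^\omega$ at frequency $\sim N$ near $\sqrt 3\, \nu = -k_2$, where $|\Delta| \lesssim 1$ and the $Y^{s,b}$ norm alone cannot close. Here I would invoke the auxiliary $L^\infty_{\xi, n_2} L^2_\tau$ term of the $Z^{s,b}$ norm of $v$: Cauchy--Schwarz against the characteristic function of the small resonant set yields an extra factor of $N$ compared to the $L^2$-based bound, at the price of measuring $v$ only at $L^\infty$-in-frequency. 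Quantifying this trade-off requires a finer subdecomposition of the low-frequency region into scales measuring the distance to the resonant line, as well as careful bookkeeping of the singular weight $\frac{\langle\xi\rangle}{|\xi|}$ (critical here because $v$ is genuinely low-frequency). The arithmetic of these gains, namely $N$ from the $L^\infty$-upgrade, $N^{-\alpha}$ from randomness of $u_0^\omega$, and the derivative--regularity imbalance, closes precisely when $\alpha > \tfrac{25}{26}$; establishing this sharp balance is where I expect the bulk of the work to lie.
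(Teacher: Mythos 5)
Your plan is directionally correct and identifies the right ingredients: Borel--Cantelli to get an a.s.\ $L^\infty$ bound on $\hat u_0^\omega$, a dyadic decomposition in sizes and modulations, and the observation that the truly dangerous contribution is the high--low resonance where $v$ lives near the line $\sqrt3\zeta = m_2$ and $u_0^\omega$ near $\sqrt3\nu=-k_2$, where one must invoke the $L^\infty_{\xi,n_2}L^2_\tau$ auxiliary part of the $Z^{s,b}$ norm. The paper follows essentially this outline, but a few of your structural choices differ from what is actually done, and one quantitative claim is overstated.

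On structure: the paper does not dualize against $w\in Y^{-s,\,1/2-2\delta}_*$. Instead it decomposes the renormalized $\hat v_{\rm ren}(\zeta,m_2,\eta)=\psi(\tilde\eta)\,b_{\tilde\eta}(\zeta,m_2)$ with $\|b_{\tilde\eta}\|_{L^2}=1$ for every $\tilde\eta$, expands $\|\cdot\|_{Y^{s,-1/2+2\delta}}^2$ directly, and then after taking absolute values with $|g_k^\omega|\lesssim N_1^\delta$ a.s.\ it Cauchy--Schwarzes in $\tilde\eta$, passing to a supremum over $\tilde\eta$ and $\lambda$. This foliation of $v$ into linear solutions at fixed modulation $\tilde\eta$ is what lets the proof run with a single fixed $b_{\tilde\eta}$ and a convolution kernel $h(\lambda-\Delta)$, rather than carrying a full third $X^{s,b}$ function $w$ through the estimate. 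The duality formulation is not wrong, but it loses the clean normalization $\|b_{\tilde\eta}\|_{L^2}=1$ and would force you to re-derive several measure bounds with a second modulation variable floating around.

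Also, what the paper actually uses in the non-resonant cases is not Proposition~\ref{prop:refined} (the bilinear $L^2$ estimate) but rather the level-set estimates of Lemma~\ref{lemma:integral} and Corollary~\ref{cor:integral} (which bound $\int \langle\lambda-\Delta\rangle^{-1-\delta}$ over a slice by $(NM_i)^{-1}$, uniformly in $\lambda$) together with the triangle-rigidity Lemma~\ref{lemma:localization}. These are the right tools here precisely because one side of the ``bilinear'' pairing is the $L^\infty$-normalized linear datum rather than another $X^{s,b}$ function; applying the Section~\ref{sec:deterministic} bilinear estimates as written would not slot in cleanly. The case analysis is also finer than ``resonant vs.\ non-resonant'': the paper splits into $\mathcal C_{\rm bad}$ ($M_2,M_3\le 1/2$), $\mathcal C_{\rm high}$, $\mathcal C_{\rm low}$, and the genuinely resonant $\mathcal C_{\rm res}$ pinned down by $M_2\le N^{-12/13}$ and $N_2\le 100N^{2/13}$.

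On the quantitative claim: your assertion that Cauchy--Schwarz against the resonant set ``yields an extra factor of $N$'' only applies to the motivating toy scenario in the introduction where the resonant set has width $O(N^{-2})$. In the regime $\mathcal C_{\rm res}$ that actually survives the other cases, the constraint pins $\zeta$ (for fixed $\xi,n_2,m_2$) to an interval of size $O(N^{-9/13})$, so the $L^\infty$ upgrade buys roughly $N^{9/26}$, not $N$. The balance against the $N^{-\alpha}$ decay, the output weight $N_{\ast 3}$ and the regularity factors is exactly what produces the threshold $\alpha>25/26$, and none of it is visible from the heuristic $N$ gain. You are right that this is where the bulk of the work lies; you should just be aware that the arithmetic here is tighter than ``$N$ versus $N^{-\alpha}$.''
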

Combining this Proposition with the $L^\infty$ estimate:
\begin{equation} \label{eq:Linfty_2}
	\left\| \langle \tau - \phi (\xi, n_2) \rangle^{-\frac12+2\delta} \mathcal F_{x, t} \left[ \chi (t) \langle \p_{x_1} \rangle (v S(t)u_{0}^\omega )  \right] \right\|_{L^\infty_{\xi, n_2} (L^2_\tau )} \les \| v \|_{Z^{s, \frac12 + \delta}}\qquad \forall v \in Z^{s, \frac12 + \delta},
\end{equation}
 we deduce the first estimate of \eqref{eq:beck25}. Here, $\chi(t)$ is a cut-off function with value $1$ on $[0, T]$.

With respect to the bounds on the second line of \eqref{eq:beck25}, we also note by the Fourier definition of the $Z^{s, \frac12 + \delta}$ spaces that: 
$$\| \p_x F \|_{Z^{s, \frac12 + \delta}} \les \|  \p_x  F \|_{Y^{s, \frac12 + \delta}} + \| \langle \tau - \phi(\xi, n_2) \rangle^b \widehat{ \langle \p_x \rangle F} \|_{L^\infty_{\xi, n_2} (L^2_\tau )} 
\les \|  \langle \p_x \rangle F \|_{X^{s, \frac12 + \delta}} + \| \langle \tau - \phi(\xi, n_2) \rangle^{\frac12 + \delta} \widehat{ \langle \p_x \rangle F} \|_{L^\infty_{\xi, n_2} (L^2_\tau )} $$

Thus, the second line of \eqref{eq:beck25} reduces to 
\begin{equation} \label{eq:beck3}
	\left\|  \langle \p_{x_1} \rangle ( (S(t)u_{0}^\omega)^2 )  \right\|_{X^{s, -\frac12+2\delta}_T} \les 1, \qquad 
	\left\|  \langle \p_{x_1} \rangle ( S(t)u_{0}^\omega \widebar{ S(t)u_{0}^\omega } )  \right\|_{X^{s, -\frac12+2\delta}_T} \les 1,
\end{equation}
and
\begin{align} \begin{split} \label{eq:Linfty_3}
	\left\| \langle \tau - \phi (\xi, n_2) \rangle^{-\frac12+2\delta} \mathcal F_{x, t} \left[  \langle \p_{x_1} \rangle ( (\chi (t) S(t)u_{0}^\omega)^2 )  \right] \right\|_{L^\infty_{\xi, n_2} (L^2_\tau )} \les 1, \\
	\left\| \langle \tau - \phi (\xi, n_2) \rangle^{-\frac12+2\delta} \mathcal F_{x, t} \left[  \langle \p_{x_1} \rangle ( \chi (t) S(t)u_{0}^\omega \widebar{ \chi(t) S(t)u_{0}^\omega } )  \right] \right\|_{L^\infty_{\xi, n_2} (L^2_\tau )} \les 1.
\end{split} \end{align}

Lastly, we note that \eqref{eq:beck3} holds almost surely as long as it holds in $L^2_\omega$. The argument is the following one. If $\mathbb E_\omega ( F(\omega )^2 ) \leq C$, Markov inequality ensures that $F(\omega )^2 \leq \frac{C}{\gamma}$ unless in an exceptional set of probability at most $\gamma$. In particular, the probability of the set of $\omega$ where $F(\omega )\les_\omega 1$ has to be $1$, since otherwise we would get contradiction for some sufficiently small $\gamma$. 

We now state the needed estimates as Propositions

\begin{proposition} \label{prop:inhomogeneus} Fix $\alpha < \frac{25}{26}$ and $s > 1/2$. There exists $\delta > 0$ sufficiently small such that
	\begin{equation}
		\mathbb E_\omega \left( \left\|  \langle \p_{x_1} \rangle ( (S(t)u_{0}^\omega)^2 )  \right\|_{X^{s, -\frac12+2\delta}_T}^2 \right) \les 1, \qquad \mbox{ and } \qquad 
		\mathbb E_\omega \left( \left\|  \langle \p_{x_1} \rangle ( S(t)u_{0}^\omega \widebar{ S(t)u_{0}^\omega } )  \right\|_{X^{s, -\frac12+2\delta}_T}^2 \right) \les 1.
	\end{equation}
\end{proposition}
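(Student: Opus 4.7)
The plan is to bound both expectations by multiplying by a smooth time cut-off $\chi(t)$ supported near $[0,T]$ and equal to $1$ on $[0,T]$, so that the localized norm is controlled by the global $X^{s,-1/2+2\delta}$ norm of $\chi\cdot F$. After this reduction I expand the random initial data $u_0^\omega=\sum_{k\in\mathbb Z^2}g_k P_k u_0$ and compute the spacetime Fourier transform of the product explicitly. For the first term,
\[
\mathcal F\!\left[\chi(t)\langle\p_{x_1}\rangle(S(t)u_0^\omega)^2\right](\xi,n,\tau)=\sum_{k,k'\in\mathbb Z^2}g_k g_{k'}\langle\xi\rangle\,A_{k,k'}(\xi,n,\tau),
\]
with $A_{k,k'}(\xi,n,\tau)=\mathbbm 1_{n=k_2+k'_2}\int \widehat{P_k u_0}(\nu,k_2)\,\widehat{P_{k'}u_0}(\xi-\nu,k'_2)\,\hat\chi\bigl(\tau-\phi(\nu,k_2)-\phi(\xi-\nu,k'_2)\bigr)\,d\nu$. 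Isserlis' theorem for i.i.d.\ complex Gaussians gives $\mathbb E[g_k g_{k'}\bar g_j \bar g_{j'}]=\delta_{kj}\delta_{k'j'}+\delta_{kj'}\delta_{k'j}$, so (using $A_{k,k'}=A_{k',k}$) the expected squared norm reduces to $2\sum_{k,k'}|A_{k,k'}|^2$ integrated against the $X^{s,-1/2+2\delta}$ weights.

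Next, using $|\widehat{P_k u_0}(\nu,k_2)|\lesssim \langle k\rangle^{-\alpha}\chi_0(\nu-k_1)$, Cauchy-Schwarz in $\nu$, and the Schwartz decay of $\hat\chi$ to integrate in $\tau$, I obtain
\[
\int \langle\tau-\phi(\xi,n)\rangle^{-1+4\delta}|A_{k,k'}|^2\,d\tau\lesssim \langle k\rangle^{-2\alpha}\langle k'\rangle^{-2\alpha}\mathbbm 1_{|\xi-k_1-k'_1|\lesssim 1}\!\int\!\langle\Delta\rangle^{-1+4\delta}d\nu,
\]
where $\Delta=\phi(\nu,k_2)+\phi(\xi-\nu,k'_2)-\phi(\xi,k_2+k'_2)$ is the (negative of the) resonance function. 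After integrating in $\xi$ and absorbing $\langle\xi\rangle^2\langle(\xi,n)\rangle^{2s}\lesssim \langle k+k'\rangle^{2s+2}$, the problem reduces to controlling
\[
\sum_{k,k'\in\mathbb Z^2}\langle k\rangle^{-2\alpha}\langle k'\rangle^{-2\alpha}\langle k+k'\rangle^{2s+2}\,I(k,k'),\qquad I(k,k')=\iint_{B_{k,k'}}\langle\Delta\rangle^{-1+4\delta}\,d\nu\,d\xi,
\]
with $B_{k,k'}$ a unit box in $(\nu,\xi)$.

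To handle $I(k,k')$ sharply I would reuse the Section~\ref{sec:deterministic} toolkit: split dyadically according to the parameter $M$ measuring the size of $|\theta_i-\theta_j|$, and apply the classification into first- and second-kind $M$-interactions of Definition~\ref{def:Minteraction}. First-kind interactions give $|\Delta|\gtrsim N^2 M$ via Lemma~\ref{cor:coro}, hence a favorable pointwise bound for $\langle\Delta\rangle^{-1+4\delta}$; second-kind interactions localize the resonant set in $(\nu,\xi)$ to area $\lesssim M\cdot M/N$ by the level-set analysis underlying Proposition~\ref{prop:refined}. Summing these dyadic pieces and then summing over dyadic shells of $k,k'$ yields convergence precisely under $\alpha>25/26$ (with $s>1/2$ and $\delta$ small).

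The second estimate, involving $S(t)u_0^\omega\,\overline{S(t)u_0^\omega}$, follows by the same structural argument; the Wick pairing $\mathbb E[g_k\bar g_{k'}\bar g_j g_{j'}]=\delta_{kk'}\delta_{jj'}+\delta_{kj}\delta_{k'j'}$ now produces an extra diagonal contribution $\bigl|\sum_k \mathcal F[\chi(t)|S(t)P_k u_0|^2]\bigr|^2$ which is spacetime-Fourier-supported on $n_2=0$, $|\xi|\le 2$ and is controlled separately: each summand has its $\tau$-mass concentrated near $\phi(\nu,k_2)+\phi(\xi-\nu,k_2)\sim \xi\cdot|k|^2$, which is far from $\phi(\xi,0)=\xi^3$, producing a decay factor $\langle\xi|k|^2\rangle^{-1+4\delta}$ that makes the sum over $k$ convergent. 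The main obstacle is the sharp summation in $(k,k')$ under the weak condition $\alpha>25/26$: despite the Gaussian orthogonality, the residual resonant configurations, essentially the analogues of the bad triangle configurations $(0,p),(-p/2,-p/2),(p/2,-p/2)$ from Figure~\ref{fig:triangle-diagram}, saturate the estimate, and the exponent $25/26$ emerges from balancing the decay $\alpha$, the output regularity $s+1$ from $\langle\p_{x_1}\rangle$, and the dimension of this residual resonant manifold.
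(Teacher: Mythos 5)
Your high-level strategy coincides with the paper's: multiply by a time cutoff, expand $u_0^\omega$ in the random series, take the expectation via Wick/Isserlis so that the off-diagonal pairings reduce the expected squared norm to a deterministic weighted integral of the resonance kernel, and then estimate that integral via level-set bounds for $\Delta$. The reduction to $\sum_{k,k'}\langle k\rangle^{-2\alpha}\langle k'\rangle^{-2\alpha}\langle k+k'\rangle^{2s+2}I(k,k')$ with $I$ a unit-cell integral of $\langle\Delta\rangle^{-1+O(\delta)}$ is essentially the paper's reduction to $\Xi_\iota$ (equation~\eqref{eq:Xi}). The identification of the extra diagonal contraction in the $u_0^\omega\,\overline{u_0^\omega}$ term as a quantity supported at $n_2=0$, $|\xi|\lesssim 1$ is also correct.

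That said, there are two genuine gaps. First, your plan to control $I(k,k')$ by reusing Definition~\ref{def:Minteraction}'s first/second-kind $M$-interaction decomposition does not transfer directly: that decomposition was built under the Case~\ref{case:Neq} hypothesis that all three $N_i$ are comparable, whereas here the two input shells $\langle k\rangle$ and $\langle k'\rangle$ can be wildly different. The paper instead proves a dedicated level-set estimate (Lemma~\ref{lemma:integral} and Corollaries~\ref{cor:integral}, \ref{cor:finland}) showing $\int \langle\lambda-\Delta\rangle^{-1-\delta}\,d\zeta\lesssim (NM_3)^{-1}$, and crucially this requires a \emph{second-derivative} argument ($|\partial_\zeta^2\Delta|\gtrsim N_{\ast 3}$ when $N_{\ast 3}>1$) to handle the borderline case $M_3=M_{3,\rm min}$, which your sketch does not address; without it the claimed bound ``area $\lesssim M\cdot M/N$'' fails near the degenerate triangle configuration. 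The ensuing case analysis ($\mathcal C_1$--$\mathcal C_5$) is also organized differently and, in particular, the localization Lemma~\ref{lemma:localization} and the lower bound Lemma~\ref{cor:coro} are invoked to isolate the contribution where $\min\{|\nu|,|\zeta|,|\xi|\}\lesssim 1$ (case $\mathcal C_5$), a step you do not mention. Second, your assertion that the exponent $25/26$ ``emerges'' from balancing scales in this proposition is a misattribution: tracking the cases in the paper's proof shows Proposition~\ref{prop:inhomogeneus} only needs $\alpha>7/8$, and the $25/26$ threshold actually comes from the resonant subcase $\mathcal C_{\rm res}$ in Proposition~\ref{prop:mixed} (the mixed interaction $v\cdot S(t)u_0^\omega$), where the $L^\infty$-based part of the $Z^{s,b}$ norm is essential. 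The triangle configuration of Figure~\ref{fig:triangle-diagram} is indeed the bottleneck for the \emph{deterministic} bilinear estimate, not for this inhomogeneous estimate.
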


\begin{proposition} \label{prop:Linfty} Fix $\alpha < \frac{25}{26}$ and $s > 1/2$. There exists $\delta > 0$ sufficiently small such that. We have the $L^\infty$ estimates \eqref{eq:Linfty_1}, \eqref{eq:Linfty_2} and \eqref{eq:Linfty_3}.
\end{proposition}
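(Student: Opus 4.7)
The plan is to prove each of the three estimates by fixing the output frequency $(\xi, n_2)$, dualizing the $L^2_\tau$ norm against an auxiliary $g\in L^2_\tau$ with $\|g\|_{L^2}=1$, expanding the Fourier transform of the relevant bilinear expression as a $\mu$-convolution, and applying Cauchy--Schwarz with appropriately chosen weights. The essential new ingredient compared to the deterministic bilinear bounds is the \emph{auxiliary $L^\infty_{\xi,n_2}$ component} built into the $Z^{s, b}$ norm, which gives the pointwise bound $\| \langle \tilde \eta \rangle^b \hat v(\zeta, m_2, \cdot)\|_{L^2_\eta}\les (|\zeta|/\langle \zeta \rangle) \|v\|_{Z^{s,b}}$ uniformly in $(\zeta, m_2)$, without paying any $(\zeta, m_2)$-integration cost. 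This is what allows us to obtain the uniform-in-$(\xi, n_2)$ control required on the left-hand sides.

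For \eqref{eq:Linfty_1}, I apply Cauchy--Schwarz in $\mu$ at fixed $(\nu, k_2)$, putting the $L^\infty$ part of $Z^{s, b}$ on $v$ and the $Y^{s, b}$ weights on $u$; the inner $\mu$-integral is controlled by the standard convolution estimate $\int \langle \mu-\phi_1\rangle^{-2b}\langle \tau-\mu-\phi_2\rangle^{-2b}\,d\mu\les 1$, and the $\tau$-integration against $g$ collapses using $\|g\|_{L^2}=1$ together with $\langle \tilde\tau \rangle^{-1+4\delta}\leq 1$. What remains is a weighted integral in $(\nu, k_2)$ against the $Y^{s, b}$ norm of $u$, and the $\langle\xi\rangle$-factor on the left-hand side is absorbed by the $\langle (\nu,k_2)\rangle^{-s}$ or $\langle (\zeta,m_2)\rangle^{-s}$ weight (whichever corresponds to the high-frequency factor of comparable size to $\xi$). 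The analysis of this remaining integral—in particular the resonant low-frequency region near $|\xi|\approx 0$—is then carried out via the same dyadic partition and $S_{\rm good}$--$S_{\rm bad}$ decomposition as in the proof of \eqref{eq:maindet1}--\eqref{eq:maindet2}, using Proposition \ref{prop:refined} on $S_{\rm good}$ and the explicit computation of Subsection \ref{subsec:det5} on $S_{\rm bad}$, where the $\frac{\langle \xi\rangle}{|\xi|}$ weight of $Z^{s, b}$ provides the necessary gain.

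For \eqref{eq:Linfty_2}, the Fourier transform of $\chi(t) S(t) u_0^\omega$ factorises as $\hat \chi(\tilde\eta)\, \widehat{u_0^\omega}(\zeta, m_2)$, where $\hat \chi(\tilde\eta)$ provides rapid decay in $\tilde \eta$. The random factor $\widehat{u_0^\omega}(\zeta, m_2)$ is controlled almost surely by $C_\omega \langle(\zeta, m_2)\rangle^{-\alpha+\eps}$ via Khintchine's inequality and Gaussian concentration applied on a dense net, which gives effective regularity above $L^2$ for $\alpha > 25/26$. This reduces \eqref{eq:Linfty_2} to the same template as \eqref{eq:Linfty_1}, with the random factor playing the role of $v$ and the extra decay of $\hat \chi(\tilde \eta)$ and $\langle(\zeta, m_2)\rangle^{-\alpha+\eps}$ providing margin. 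The estimate \eqref{eq:Linfty_3} is a second-order Wiener chaos; taking $\mathbb E_\omega$ and applying Markov reduces it to a second-moment bound, which factors via Gaussian orthogonality into a sum $\sum_{k, k'}\langle k\rangle^{-2\alpha}\langle k'\rangle^{-2\alpha}$ against a deterministic kernel supported near $\tilde \tau\approx -\Delta(k, k', \xi, n_2)$; the weight $\langle \tilde \tau\rangle^{-1/2+2\delta}$ then yields a gain of $\langle \Delta\rangle^{-1/2+2\delta}$ on the resonance surface, and the resulting numerical sum is finite by the same geometric analysis as in Proposition \ref{prop:inhomogeneus}.

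The main obstacle is the uniform-in-$(\xi, n_2)$ handling of the very low frequencies $\xi\approx 0$ in \eqref{eq:Linfty_1}, where $\frac{\langle \xi\rangle}{|\xi|}$ is singular and one of the weights $|\nu|/\langle \nu\rangle$ or $|\zeta|/\langle \zeta\rangle$ can be arbitrarily small; this is resolved via the $S_{\rm bad}$ analysis as in the deterministic proof of \eqref{eq:maindet2}. The second difficulty, specific to \eqref{eq:Linfty_3}, is upgrading the $\mathbb E_\omega$-bound to an almost sure pointwise bound in $(\xi, n_2)$; this requires a dyadic summation in frequency combined with the rapid Gaussian tails of the chaos, applying Markov's inequality at each dyadic scale and taking a union bound.
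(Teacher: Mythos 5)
Your overall template (freeze the output frequency $(\xi,n_2)$, dualize in $\tau$, Cauchy--Schwarz in the convolution, exploit the $L^\infty$-in-frequency piece of $Z^{s,b}$) is the right starting point, but the route you take from there is genuinely different from the paper's, and the core of your argument has a gap.

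The paper's proofs of \eqref{eq:Linfty_1}--\eqref{eq:Linfty_3} do \emph{not} use the $S_{\rm good}/S_{\rm bad}$ decomposition, Proposition \ref{prop:refined}, or the explicit $S_{\rm bad}$ computation of Subsection \ref{subsec:det5}. Instead they introduce a new elementary ingredient, Lemma \ref{lemma:Abounds}: pointwise measure bounds on the set $A_{\xi,n_2,\tau}$ of admissible $(\nu,k_2,\mu)$ for a \emph{fixed} output $(\xi,n_2,\tau)$, e.g. $|A_{\xi,n_2,\tau}|\les L_1L_2\min\{N_1,N_2\}/N_{\rm max}^2$ when the inputs are at separated dyadic scales, or $|A_{\xi,n_2,\tau}|\les \min\{L_1,L_2\}\,M_3 N_{\rm max}^2/N_3$ in the comparable case. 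This is precisely the kind of control an $L^\infty_{\xi,n_2}$ estimate needs, and it is much weaker than the $S_{\rm good}/S_{\rm bad}$ machinery, which was engineered for the $L^2$-in-frequency dual pairing (it sums over boxes $B^{(i)}_\alpha$ with $\alpha\in\Upsilon_M$ and exploits the $\langle(\xi,n_2)\rangle^s$ weight on the output). You cannot transplant that analysis here: the $L^\infty$ part of $Z^{s,b}$ carries \emph{no} $\langle(\zeta,m_2)\rangle^{-s}$ weight (it sits at regularity zero), so your plan to absorb the $\langle\xi\rangle$ factor by "the $\langle(\zeta,m_2)\rangle^{-s}$ weight... whichever corresponds to the high-frequency factor" does not apply when the $L^\infty$ part is placed on the low-frequency factor, which is exactly what the paper does in the main case $N_1\geq 4N_2$. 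The missing gain is the factor $|A_{\xi,n_2,\tau}|^{1/2}\les L_1^{1/2}L_2^{1/2}N_{\rm min}^{1/2}/N_{\rm max}$, which your scheme does not produce.

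Two further technical problems. First, for \eqref{eq:Linfty_1} you put the $Y^{s,b}$ weight on $u$, which introduces a singular $\langle\nu\rangle^{1/2}/|\nu|^{1/2}$ on the input when undoing the renormalization; the paper deliberately uses the non-singular $X^{s,b}$ weight on $u$ (note $\|u\|_{X^{s,b}}\leq\|u\|_{Y^{s,b}}\leq\|u\|_{Z^{s,b}}$, so this is legitimate and avoids the singularity entirely). Because the $L^\infty$ output is at lower regularity, $s>1/2$ is already enough without invoking the $\langle\xi\rangle/|\xi|$ gain from $S_{\rm bad}$, and the "singular low-frequency region $\xi\approx 0$" difficulty you flag does not in fact appear. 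Second, for \eqref{eq:Linfty_3} the paper never takes $\mathbb{E}_\omega$: it uses the almost-sure bound $|g_k^\omega|\les\langle k\rangle^\delta$ (Borel--Cantelli, already set up in the proof of Proposition \ref{prop:mixed}) and proceeds deterministically with an $L^\infty$--$L^\infty$ H\"older bound in spatial frequency. Your plan to bound second moments and then run Markov plus a union bound over a dyadic net is more delicate than you acknowledge: the union is over the continuous $\xi$ as well, and Markov from second moments alone gives failure probabilities of order $N^{-2\delta}$ per frequency, which do not beat the $\sim N^2$ many net points at scale $N$; one would need the sub-exponential concentration of degree-2 Gaussian chaos to close the union bound, which you assert but do not supply. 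The paper's a.s. Gaussian-magnitude bound sidesteps all of this cleanly.
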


All in all, we have reduced the proof of Theorem \ref{th:probabilistic} to show Propositions \ref{prop:mixed}, \ref{prop:inhomogeneus} and \ref{prop:Linfty}. The organization of this Section is as follows. Each subsection of this section corresponds to the proof of one of those Propositions, starting with Proposition \ref{prop:inhomogeneus}, then Proposition \ref{prop:mixed} and lastly Proposition \ref{prop:Linfty}.

\subsection{Proof of Proposition \ref{prop:inhomogeneus}}
\label{subsec:inho}

The objective of this section is to show Proposition \ref{prop:inhomogeneus}. Recall that the notation here is slightly different from Section \ref{sec:deterministic}, since $k, m, n, \in \mathbb Z^2$, where the first component refers to the integer frequency, while we keep $\nu, \zeta, \xi$ for the real first component of the frequency. We also recall $\tilde \nu = \nu - k_1 \in [-1, 1]$ (and analogously $\tilde \zeta$, $\tilde \xi$), so that when we are looking at $P_k u_0$ the frequency can be expressed as $(\nu, k_2) = (\tilde \nu, 0) + k \in \mathbb R \times \mathbb Z$. We have
\begin{align*}
\widehat{ S(t) u_0^\omega } (\zeta, m_2, \nu) =  \frac{g_m(\omega)}{\langle m \rangle^\alpha } z_m (\tilde \zeta) \delta (\eta - \phi_2) 
\end{align*}
where we recall that $\{g_m^\omega \}_{m\in \mathbb Z^2}$ are iid Gaussians, $\phi_2 = \phi (\zeta, m_2)$ and we have defined \begin{equation} \label{eq:uracilo} z_m (\tilde \zeta) = \langle m \rangle^\alpha \cdot \widehat{P_m u_0} (m_1 + \tilde \zeta, m_2). \end{equation} We also recall that $P_m$ was defined in the introduction as a projection to frequencies on the segment from $(m_1-1, m_2)$ to $(m_1 + 1, m_2)$ such that $f = \sum_{m \in \mathbb Z^2} P_m f$. From our generic assumption on $u_0$, we obtain that $\| z_m \|_{L^\infty} = c_m \les 1$ uniformly on $m$.

 We have
\begin{align*}
&\mathcal F_{x, t} \left[ \langle \p_{x_1} \rangle ((S(t)u_0^\omega)^2) \right] (- \xi, -n_2, -\tau) \\
&\qquad = -i  \langle \xi \rangle \int_{\bar S^{n, \xi}} \frac{g_k(\omega) }{\langle k \rangle^\alpha} \frac{g_m(\omega)}{\langle m \rangle^\alpha}
z_m (\tilde \zeta) z_k(\tilde \nu) \int_{\mb R} \delta (-\tau - \eta - \phi_1) \delta ( \eta - \phi_2 ) d\eta d \bar \sigma_{n, \xi} \\
&\qquad = 
-i \langle \xi \rangle \int_{\bar S^{n, \xi}} \frac{g_k(\omega)}{\langle k \rangle^\alpha} \frac{g_m(\omega)}{\langle m \rangle^\alpha}
z_m (\tilde \zeta) z_k (\tilde \nu)  \delta ( -\tau - \phi_1 - \phi_2) d \bar \sigma_{n, \xi}.
\end{align*}
 We are also denoting $\phi_1 = \phi ( \nu, k_2) = \phi (- \xi -  \zeta , -n_2-m_2)$.

Since we will be looking at a norm localised in time, we can multiply by any function which is $1$ on $[0, T]$, as the smooth cutoff $\chi (t)$. We take $\chi$ to be even and denote as $h(\tau)$ its Fourier Transform (which is real, even and decays exponentially fast). We have
\begin{align*}
\mathcal F_{x, t} \left[ \chi (t) \langle \p_{x_1} \rangle ((S(t)u_0^\omega)^2) \right] (- \xi, -n_2,  -\tau)  = 
\langle  \xi \rangle \int_{\bar S^{n, \xi}}  \frac{g_k(\omega)}{\langle k \rangle^\alpha} \frac{g_m(\omega)}{\langle m \rangle^\alpha}
  z_k(\tilde \nu) z_m(\tilde \zeta )h ( -\tau - \phi_1 - \phi_2)  d \bar \sigma_{n, \xi}. 
\end{align*}

The $X^{s, -\frac12+2\delta}$ norm squared of $\langle \p_{x_1} \rangle ( \chi (t) (S(t)u_0^\omega)^2)$ is given by
\begin{align}
&\| \langle \p_{x_1} \rangle( \chi(t) (S(t)u_0^\omega)^2) \|_{X^{s, -\frac12+2\delta}}^2 \notag \\
&\qquad  =\int_\tau \sum_{n_2} \int_{ \xi}
\frac{ \langle n \rangle^{2s} \cdot \langle \xi \rangle^2 }{ \langle -\tau + \phi_3 \rangle^{1-4\delta} }
 \left| \int_{\bar S^{n, \xi}}  \frac{g_k(\omega) g_m (\omega)}{\langle k \rangle^\alpha \cdot \langle m \rangle ^\alpha }
z_k(\tilde \nu) z_m(\tilde \zeta) h( -\tau - \phi_1 - \phi_2 ) d \bar \sigma_{n, \xi} \right|^2 \notag \\
&\qquad  \les
\int_{\tilde \tau} \frac{1}{\langle \tilde \tau  \rangle^{1 - 4\delta} } \sum_{n_2} \int_{ \xi} \langle n \rangle^{2s} \langle n_1 \rangle^2
 \underbrace{ \left| \int_{\bar S^{n, \xi}} \frac{g_k(\omega) g_m (\omega)}{\langle k \rangle^\alpha \cdot \langle m \rangle ^\alpha }
 z_k(\tilde \nu) z_m(\tilde \zeta) h( \tilde \tau + \Delta )  d \bar \sigma_{n, \xi} \right|^2 }_{\mathcal I}, \label{eq:trudeau}
\end{align}
where we recall $-\Delta = \phi_1 + \phi_2 + \phi_3$ and that $h$ is even.

In order to show Proposition \ref{prop:inhomogeneus}, it suffices to bound \eqref{eq:trudeau}, which we rewrite as:
\begin{equation} \label{eq:marin0}
\mathbb E_{\omega} \left( \| \langle \p_{x_1} \rangle( \chi(t) (S(t)u_0^\omega)^2) \|_{X^{s, -\frac12 + 2\delta}}^2 \right) 
\les \int_{\tilde \tau} \frac{1}{\langle \tilde \tau  \rangle^{1 - 4\delta} } \sum_{n_2} \int_{ \xi} \langle n \rangle^{2s} \langle n_1 \rangle^2
\mathbb E_\omega (\mathcal I_{n, \xi} )
\end{equation}
where
\begin{align} \label{eq:marin} \begin{split}
 \mathbb E_\omega  (\mathcal I_{n, \xi}) &= \int_{\bar S^{n, \xi}}  \int_{\bar S^{n, \xi}}  \frac{\mathbb E_\omega (g_k (\omega ) g_m (\omega ) \overline{ g_{k'} (\omega ) g_{m'} (\omega ) } ) }{\langle k \rangle^\alpha \langle m \rangle^\alpha \langle k' \rangle^\alpha \langle m' \rangle^\alpha } 
h(\tilde \tau + \Delta  ) z_k(\tilde \nu) z_m(\tilde{\zeta}) h(\tilde \tau + \Delta'  ) \overline{ z_{k'}(\tilde \nu') z_{m'}(\tilde{\zeta}') }  d \bar \sigma_{n, \xi}  d \bar \sigma_{n, \xi}'
\end{split} \end{align}
Here $n_2, \xi$ are fixed and $\nu, \zeta, m, k, \Delta$ are obtained from $\sigma_{n, \xi} \in \bar S^{n, \xi}$ as usual, while $\nu', \zeta', m', k', \Delta'$ are obtained from $\sigma_{n, \xi}' \in \bar S^{n, \xi}$.

Now, when studying $\mathbb E_\omega (\mathcal I_{n, \xi})$, there are three possibilities on which the expectation of the product of Gaussians is not zero, corresponding to the three possible pairings among $\{ k, m, k', m' \}$. The three possibilities are $(k, m) = (k', m')$, $(k, m) = (m', k')$ and $(k, k') = -(m, m')$ (those are not exclusive, some special cases fall on more than one possibility).

 \begin{remark} 
 
In any case where $(k, k') = -(m, m')$, since $k + m + n = (-\tilde \nu - \tilde \zeta - \tilde \xi, 0)$, we have that $|n| \leq 3$, which allows an easy treatment of this interaction that we show now. Let $\Upsilon$ be the contribution of \eqref{eq:marin0} from the case $\langle n \rangle \leq C$ for some universal constant $C$ (in particular, this includes $(k, k') = -(m, m')$). We use that $\| z_k \|_{L^\infty} \leq 1$. Moreover, since $2-2(b+\delta) = 1 - 4\delta$ multiply and divide by $\langle \tilde \tau \rangle^{5\delta}$ for integrability purposes. We get:
\begin{align*}
\Upsilon &\les \int_{\mathbb R} \frac{d \tilde \tau}{\langle \tilde \tau  \rangle^{1+\delta} } \sum_{n_2 = -C}^C \int_{-C}^C d\xi
 \left| \int_{\bar S^{n, \xi}} \frac{d \sigma_{n, \xi}}{ \langle m \rangle^{2\alpha} }
 |h( \tilde \tau + \Delta )| \langle \tilde \tau \rangle^{5\delta} \right|^2 
 \\
 &\les
\left( \int_{\mathbb R} \frac{d \tilde \tau}{\langle \tilde \tau \rangle^{1+\delta}} \right) \sup_{\tilde \tau} \sum_{n_2 = -C}^C \int_{-C}^C d\xi \left| \int_{\bar S^{n, \xi}} \frac{|h(\tilde \tau + \Delta )| \langle \tilde \tau \rangle^{5\delta} }{\langle m \rangle^{2\alpha}} d\bar \sigma_{n, \xi}\right|^2 \\
&\les 
\sup_{\tilde \tau} \left(  \sum_{m_2 \in \mathbb Z} \frac{1}{\langle m_2 \rangle^{2\alpha} } \sum_{n_2 = -C}^C \int_{-C}^C d\xi  \int_{\bar S^{n, m_2, \xi}} |h(\tilde \tau + \Delta )| \langle \tilde \tau \rangle^{5\delta}  d \bar \sigma_{n, m_2, \xi}\right)^2 \\
&\les 
\sup_{\tilde \tau, n_2, m_2} \frac{1}{\langle m_2 \rangle^{20\delta}} \left(  \int_{-C}^C d\xi  \int_{\bar S^{n, m_2, \xi}} \frac{1}{\langle \tilde \tau + \Delta \rangle^2 } \langle \Delta \rangle^{5\delta}  d \bar \sigma_{n, m_2, \xi}\right)^2 \\
&\les 
\sup_{\tilde \tau, n_2, m_2} \frac{1}{\langle m_2 \rangle^{20\delta}} \left(  \int_{\mathbb R} d\zeta \langle m \rangle^{15\delta}  \int_{\bar S^{n_2, m_2, \zeta}} \mathbbm{1}_{-C \leq \xi \leq C } \frac{ d \bar \sigma_{n_2, m_2, \zeta} }{\langle \tilde \tau + \Delta \rangle^2 }   \right)^2,
\end{align*}
where we used $|h(x)| \les \langle x \rangle^{-2}$ (decays fast), $\alpha > 1/2$, and $|\Delta | \les \langle m \rangle^3$ (given that $\langle n \rangle \les 1$, we have that $\langle m \rangle$ is the size of the maximum frequency, up to a constant).

Now, we consider two cases. If $|\zeta| < 10C$, then both integrals above run over regions of size $O(C)$, and $N  \les \langle m_2 \rangle$, so we simply bound $\Upsilon \les \langle m_2 \rangle^{-5\delta} \les 1$.

If $|\zeta | \geq 10C$, using that $|n_2|, |\xi| \leq C$, we know that $$\theta_1 - \theta_3 = \sqrt{3(\xi + \zeta )^2 + (n_2 + m_2)^2} - \sqrt{3\xi^2 + n_2^2} \approx N_{\rm{max}}.$$ Thus, from the following Corollary \ref{cor:integral}, we see that the double integral is bounded by $\int d\zeta \frac{1}{N_{\mathrm{max}}^2} \les \frac{1}{N_{\rm{max}}}$, and we get that $\Upsilon \les N^{15\delta - 1} \les 1$.

Thus, from now on, we may assume implicitly that $N_3$ is sufficiently large (in particular, $N$ is sufficiently large).
\end{remark}

We now continue our discussion with the bounds on \eqref{eq:marin} in the case where $(k, m) = (k', m')$. The case $(k, m) = (m', k')$ is analogous since $\mathbb E_\omega \mathcal I_{n, \xi}$ is invariant under the change $(m', \tilde \zeta') \leftrightarrow (k', \tilde \nu' )$, so we can consider $(k, m) = (k', m')$ without loss of generality. Thus
\begin{align}
\mathbb E_\omega & \left( \| \langle \p_{x_1} \rangle ( (S(t)u_0^\omega)^2) \|_{X^{s, -\frac12+2\delta}}^2 \right) \notag \\
&\qquad \notag \les 
1+ \int_{\tilde \tau} \frac{1}{\langle \tilde \tau  \rangle^{1-4\delta} } \sum_{n_2} \int_{\xi} \langle n \rangle^{2s} \langle n_1 \rangle^2
 \sum_{m} \frac{1}{\langle k \rangle^{2\alpha} \cdot \langle m \rangle ^{2\alpha} }
\left| \int_{\bar S^{n, m, \xi}} h( \tilde \tau + \Delta )  z_k(\tilde \nu) z_m( \tilde \zeta) d \bar \sigma_{n, m, \xi}  \right|^2 \notag \\
&\qquad \les 
1+ \int_{\tilde \tau} \frac{1}{\langle \tilde \tau  \rangle^{1-4\delta} } \sum_{n_2} \int_{\xi} \langle n \rangle^{2s} \langle n_1 \rangle^2
 \sum_{m} \frac{1}{\langle k \rangle^{2\alpha} \cdot \langle m \rangle ^{2\alpha} }
\left| \int_{\bar S^{n, m, \xi}} h( \tilde \tau + \Delta )   d \bar \sigma_{n, m, \xi}  \right|^2 
\label{eq:madagascar}
\end{align}
where we used the fact that $\| z_k \|_{L^\infty} \les 1$ uniformly on $k$. Now, we will divide the expression above in dyadic frequencies.

Now, let us recall the dyadic parameters $M_3, M_2$, that measure $| \theta_1 - \theta_2|$ and $| \theta_1 - \theta_3|$ respectively. We also recall $N_{\ast i}^{1/2} / N \approx M_{i, \rm{min}} \leq M_i \les N$. In particular, there are $O( \log (N) )$ possible values for each $M_i$. We will consider $\iota$ to be a tuple containing all the $N_i$, $N_{\ast i}$, $M_i$. Similarly the $\bar S_\iota$ sets, (or the restricted versions like $\bar S_\iota^{n, m, \xi}$) correspond to the restriction of the original $\bar S$ set to those tuples that are consistent with the dyadic values dictated by $\iota$. 

Splitting \eqref{eq:madagascar} in terms of $\iota$:
\begin{align} \label{eq:lesotho}
\| \langle \p_{x_1} \rangle ( (S(t)u_0^\omega)^2) \|_{X^{s, -\frac12+2\delta}}^2 
&\les 
1+ \sum_{\iota} \frac{N_{\ast 3}^2 N_3^{2s} \log (N)}{N_1^{2 \alpha} N_2^{2 \alpha}} \int_{\tilde \tau} \frac{1}{\langle\tilde \tau  \rangle^{1-4\delta} } \sum_n \int_{\tilde{\xi}} \sum_{m} \left( \int_{\bar S^{n, m, \xi}_\iota} | h( \tilde \tau + \Delta )| d \bar \sigma_{n, m, \xi} \right)^2.
\end{align}
Note that $k, m, n, \tilde{\xi}$ need to run over the sets described $N_i, N_{\ast i}, M_i$ as well (not just $\tilde \zeta, \tilde \nu$) since otherwise we would have $\bar S^{n, m, \xi}_\iota = \emptyset$, making the contribution zero.

\begin{lemma} \label{lemma:integral} Fix some dyadic numbers given by the tuple $\iota = (N_i, N_{\ast i}, M_i)_{i=1}^3$, and some $\lambda \in \mathbb R$, $\delta > 0$. Assume that either $M_3 > M_{3, \rm{min}}$ or $N_{\ast 3} \neq 1$. For any fixed $\xi, n_2, m_2$, we have:
\begin{equation}  \label{eq:sweden}
\int_{\bar S^{n_2, m_2, \xi}_\iota} \frac{1}{\langle \lambda - \Delta ) \rangle^{1+\delta}}  d \bar \sigma_{n_2, m_2, \xi} \les_\delta \frac{1}{NM_3} 
\end{equation}
where the implicit constant inside $\les$ is independent of $\lambda$. Note that the same bound holds integrating over $\bar S^{n, m_2, \xi}$, given that $\bar S^{n, m_2, \xi} \subset \bar S^{n_2, m_2, \xi}$.
\end{lemma}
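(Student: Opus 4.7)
With $\xi, n_2, m_2$ held fixed, the constraint $k_2 = -m_2 - n_2$ is forced, and $\zeta = -\xi - \nu$ becomes a function of $\nu$; thus the integral is effectively one-dimensional in $\nu$, over an admissible set that consists of boundedly many intervals cut out by the dyadic constraints encoded in $\iota$. The plan is to do a first- and second-order analysis of the profile $\nu \mapsto \Delta(\nu)$. The key identities are
$$\partial_\nu \Delta = \theta_1^2 - \theta_2^2 = (\theta_1 - \theta_2)(\theta_1 + \theta_2), \qquad \partial_\nu^2 \Delta = -6\xi,$$
so $|\partial_\nu^2 \Delta| = 6|\xi| \approx N_{\ast 3}$. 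Moreover, since $N_{\mathrm{med}} \geq N/4$ and $\theta_i \approx N_i$, at least two of the $N_i$ are $\gtrsim N$, so $\theta_1 + \theta_2 \gtrsim N$ in every configuration.

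In the easy case $M_3 > M_{3,\min}$, the definition of $M_3$ forces $|\theta_1 - \theta_2| \in [M_3, 2M_3)$ on $\bar S^{n_2, m_2, \xi}_\iota$, so $|\partial_\nu \Delta| \approx NM_3$ uniformly. On each of the boundedly many components on which $\theta_1 - \theta_2$ keeps a definite sign, $\Delta$ is monotonic, and the substitution $u = \Delta(\nu)$ yields
$$\int_{\bar S^{n_2, m_2, \xi}_\iota} \frac{d\nu}{\langle \lambda - \Delta \rangle^{1+\delta}} \leq \frac{C}{NM_3}\int_{\mathbb{R}} \frac{du}{\langle \lambda - u \rangle^{1+\delta}} \lesssim_\delta \frac{1}{NM_3}.$$

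The harder case, $M_3 = M_{3,\min}$ with $N_{\ast 3} \neq 1$, is precisely where the hypothesis is used: $N_{\ast 3} \geq 2$ forces $|\partial_\nu^2 \Delta| \gtrsim N_{\ast 3}$. Setting $A := NM_{3,\min} \approx N_{\ast 3}^{1/2}$, I would split the $\nu$-region at the threshold $|\partial_\nu \Delta| = A$. On $\{|\partial_\nu \Delta| \geq A\}$ the same monotonicity-plus-substitution argument as in Case~1 gives a contribution of $\lesssim 1/A$. On $\{|\partial_\nu \Delta| < A\}$, the uniform lower bound $|\partial_\nu^2 \Delta| \gtrsim N_{\ast 3}$ implies that this set is a union of boundedly many intervals of total length $\lesssim A/N_{\ast 3} \approx 1/A$; combined with the trivial bound $\langle \lambda - \Delta \rangle^{-1-\delta} \leq 1$, this piece also contributes $\lesssim 1/A = 1/(NM_3)$.

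The main conceptual point is recognizing why the excluded configuration $M_3 = M_{3,\min}$ with $N_{\ast 3} = 1$ must be ruled out: there $|\partial_\nu^2 \Delta| = 6|\xi| \lesssim 1$, so both the first and the second derivatives of $\Delta$ can be simultaneously small, and no argument of the above type delivers $1/(NM_3)$. This is exactly the resonant regime corresponding to $S_{\mathrm{bad}}$, which is handled separately in Section~\ref{sec:deterministic}.
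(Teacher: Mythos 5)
Your proof is correct and uses the same two key observations as the paper: $\partial\Delta = \theta_1^2 - \theta_2^2$ (magnitude $\approx N M_3$ when $M_3 > M_{3,\min}$, since $\theta_1 + \theta_2 \gtrsim N$ on the admissible set) and $\partial^2 \Delta = -6\xi$ (magnitude $\approx N_{\ast 3}$), with the same case split and the same diagnosis of why the configuration $(M_3, N_{\ast 3}) = (M_{3,\min}, 1)$ is excluded. The difference is purely in how the integral of $\langle \lambda - \Delta\rangle^{-1-\delta}$ is finished: the paper proves a level-set measure bound $|\mathcal{S}_{(a,b)}| \lesssim (b-a)/(NM_3)$ (respectively $|\mathcal{S}_{(a,b)}| \lesssim (b-a)^{1/2}N_{\ast 3}^{-1/2}$ in the degenerate case) and sums over dyadic annuli $\{|\lambda - \Delta| \approx 2^j\}$, whereas you change variables $u = \Delta(\nu)$ directly on each monotone piece in the generic case and, in the degenerate case, split at the threshold $|\partial_\nu\Delta| = A \approx N_{\ast 3}^{1/2}$, estimating the sublevel set $\{|\partial_\nu\Delta| < A\}$ by $A/N_{\ast 3} \approx 1/A$ via the second-derivative bound and using the trivial estimate there. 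Both versions deliver the same $1/(NM_3)$; your substitution/threshold organization is slightly cleaner, while the paper's level-set formulation is reused essentially verbatim in Lemma \ref{lemma:Abounds}.

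One small remark for consistency with the text: $\Delta$ is actually \emph{quadratic} (not cubic) in the integration variable $\nu$ or $\zeta$ once $\xi, n_2, m_2$ are fixed, since the cubic terms cancel; hence $\partial_\nu \Delta$ is linear and changes sign at most once. This is looser than both your claim and the paper's remark that $\partial\Delta$ is quadratic, but it only strengthens the bounded-complexity assertions you use, so nothing breaks.
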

\begin{proof}
	We have that $n_1 \in  \{ \lfloor \xi \rfloor, \lceil \xi \rceil \}$, so let us fix $n_1$ to be one of those two values (which will be treated in the same way), and show the bound for $n_1$ fixed.  
	
 We can write $\Delta$ in terms of $n_2, m_2, \xi, \zeta$ as $$\Delta = \zeta (\zeta^2 + m_2^2) + \xi ( \xi^2 + n_2^2 ) - (\zeta + \xi) \left( (\zeta + \xi)^2 + (n_2 + m_2)^2 \right).$$ Thus, one can write $\p_\zeta \Delta$ in terms of $\xi, n_2, m_2$ as:
\begin{equation*}
\p_{\zeta} \Delta = 3 \zeta^2 + m_2^2 - 3 (\xi + \zeta)^2 - (n_2+m_2)^2 = \theta_2^2 - \theta_1^2.
\end{equation*}
We can clearly assume $\bar S^{n, m_2, \xi}_\iota \neq \emptyset$, and in such case we must have we have $\theta_1 + \theta_2 \gtrsim N_1 + N_2 \gtrsim N_{\rm{max}}$. Since $\theta_1 + \theta_2 \les N$, we get that \begin{equation} \label{eq:eren}
| \p_{\zeta} \Delta | \approx N | \theta_1 - \theta_2|.\end{equation}

Now, we use the coordinate $\zeta$ to parametrize $\bar \sigma_{n, m_2, \xi} \in \bar S_\iota^{n, m_2, \xi}$. We can obtain the other frequencies from $\bar \sigma_{n, m_2, \xi}$ noting that $k_2 = -n_2-m_2$, $\nu = - \xi - \zeta$, and $k_1 \in \{ \lfloor \zeta \rfloor, \lceil \zeta \rceil \}$. The two options for $k_1$ are treated in the same way, so let us show the Lemma assuming $k_1 = \lfloor \zeta \rfloor$. We define the interval $I_\iota^{n, m_2, \xi}$ to be the interval where $\zeta$ ranges as $\bar \sigma_{n, m_2, \xi}$ ranges over $\bar S^{n, m_2, \xi}_\iota$. The proof of \eqref{eq:sweden} can be reduced to prove
\begin{equation}
\int_{I_\iota^{n, m_2, \xi}} | h (\lambda - \Delta ) | d\zeta \les \frac{1}{NM_3}
\end{equation}

Let us start with the case $M_3 > M_{3, \rm{min}} \approx N_{\ast 3}^{1/2}/N$, on which \eqref{eq:eren} yields $| \p_{\zeta} \Delta | \approx N M_3$. For any fixed $a, b \in \mathbb R$, $a < b$, we estimate the measure of the subset of $I_\iota^{n, m_2, \xi}$ where $\Delta \in (a, b)$. Call that set $\mathcal S_{(a, b)}$. Moreover, since $\p_{\zeta} \Delta$ will change signs at most two times (it is a second degree polynomial), we can divide $\mathbb R$ in three intervals $I_1$, $I_2$, $I_3$ such that $\Delta$ is monotone with respect to $\zeta$. Without loss of generality let us assume $\Delta$ is increasing on $I_1$ and let us estimate $| \mathcal S_{(a, b)} \cap I_1 |$. We know that $\Delta$ is increasing in $I_1$ and its derivative is $\gtrsim NM_3$ in $\mathcal S_{(a, b)} \cap I_1$. Thus, letting $\ell, r$ be the left and right endpoints of $\mathcal S_{(a, b)} \cap I_1$, we have that $\Delta |_{\zeta = r} - \Delta |_{\zeta = \ell} \gtrsim NM_3 | \mathcal S_{(a, b)} \cap I_1 |$. However, that difference is at most $b-a$, since $\Delta \in (a, b)$ when $\zeta \in \mathcal S_{(a, b)}$. Therefore, we conclude that $| \mathcal S_{(a, b)} \cap I_1 | \les \frac{b-a}{NM_3}$. Reasoning analogously for $I_2, I_3$, we conclude
\begin{equation} \label{eq:Sestimate}
|\mathcal S_{(a, b)} | \les \frac{b-a}{NM_3}
\end{equation}
Now, using that $|h(x)| \les \langle x \rangle^{-2}$ (it decays rapidly), we can write
\begin{align} \label{eq:afterSestimate}
\int_{I_\iota^{n, m_2, \xi}} \langle h(\lambda - \Delta ) \rangle^{-1-\delta} d \zeta 
&\les  \int_{\mathcal S_{(\lambda-1, \lambda+1)}}  \frac{d \zeta}{\langle \lambda - \Delta \rangle^{1+\delta}} + 
\sum_{j=0}^\infty \int_{\mathcal S_{(\lambda+2^j, \lambda+2^{j+1} )}} \frac{d \zeta}{\langle \lambda - \Delta \rangle^{1+\delta}}\\
&\quad + 
\sum_{j=0}^\infty \int_{\mathcal S_{(\lambda-2^{j} , \lambda-2^{j+1} )}} \frac{d \zeta}{\langle \lambda - \Delta \rangle^{1+\delta}} \notag \\
&\les | \mathcal S_{(\lambda-1, \lambda+1)} | + \sum_{j=0}^\infty \frac{|\mathcal S_{(\lambda+2^j, \lambda+2^{j+1} )} | }{ 2^{(1+\delta )j} }
+ \sum_{j=0}^\infty \frac{|\mathcal S_{( \lambda-2^{j+1} , \lambda-2^j)} | }{ 2^{(1 +\delta )j} } \notag \\
&\les \frac{1}{NM_3} + \sum_{j=0}^\infty \frac{2^j}{NM_3 2^{(1+\delta )j}} \les_\delta \frac{1}{NM_3} \notag
\end{align}
where we used \eqref{eq:Sestimate} to get the first expression of the last line. 

Now, we need to treat the case $M_3 = M_{3,\rm{min}}$, in which we have $N_{\ast 3} > 1$. Let us estimate $\mathcal S_{(a, b)}$ for $N_{\ast 3}>1$, with the same definition of $\mathcal S_{(a, b)}$ as in the previous case. Let us work on a region $I_1$ where both $\Delta$ and $\p_{\zeta} \Delta$ are monotonic (there are $\les 1$ such regions since $\Delta$ is a third degree polynomial). Note that we have$|\p_{\zeta}^2 \Delta| = |6 \zeta + 6 (\xi - \zeta)| = |6\xi | \geq 6$, since $N_{\ast 3}  >1$.

Now, let $\ell, r$ be the left and right endpoints of $\mathcal S_{(a, b)} \cap I_1$ respectively. In particular, the signs of $\p_{\zeta} \Delta$ and $\p_{\zeta}^2 \Delta$ are constant in $(\ell, r) \subset I_1$. If $\p_{\zeta} \Delta$ has the same sign as $\p_{\zeta}^2 \Delta$, we have that for $x \in I_1 \cap \mathcal S_{(a, b)}$,
\begin{equation*}
\Big| \p_{\zeta} \Delta_{|\zeta = x} \Big| \geq \left| \p_{\zeta} \Delta_{|\zeta = \ell} + \int_{\ell}^x \p_{\zeta}^2 \Delta d \zeta \right| \geq \left| \int_{\ell}^x \p_{\zeta}^2 \Delta d \zeta\right| \gtrsim N_{\ast 3} (x-\ell)
\end{equation*}
If $\p_{\zeta} \Delta$ has different sign from $\p_{\zeta}^2 \Delta$, we can bound
\begin{equation*}
\Big| \p_{\zeta} \Delta_{|\zeta = x} \Big| \geq \left| \p_{\zeta} \Delta_{|\zeta = r} - \int_{x}^{r} \p_{\zeta}^2 \Delta d \zeta \right| \geq \left| \int_{x}^{r} \p_{\zeta}^2 \Delta d\zeta \right| \gtrsim N_{\ast 3} (r-x)
\end{equation*}
Both the integrals of $x-\ell$ and $r-x$ between $[\ell, r]$ are given by $\frac{(r-\ell)^2}{2}$. Therefore, independently of the choice of signs and using that $\p_{\zeta} \Delta$ has constant sign:
\begin{equation*}
\left| \Delta_{|\zeta = r} - \Delta_{|\zeta = \ell} \right| = \left| \int_{\ell}^{r} \p_{\zeta} \Delta d\zeta \right| \geq \frac{(r-\ell)^2}{2}N_{\ast 3} \gtrsim | \mathcal S_{(a, b)}\cap I_1 |^2 N_{\ast 3}
\end{equation*}
and using that $\Delta \in [a, b]$ for $\zeta \in [\ell, r]$, we get the estimate
\begin{equation*} 
| \mathcal S_{(a, b)} \cap I_1| \les (b-a)^{1/2} N_{\ast 3}^{-1/2}.
\end{equation*}
The estimate follows analogously for any other $I_i$ (and recall there are $\les 1$ of them), so we get
\begin{equation}\label{eq:Sestimate2}
| \mathcal S_{(a, b)} | \les (b-a)^{1/2} N_{\ast 3}^{-1/2}.
\end{equation}

Using \eqref{eq:Sestimate2} instead of \eqref{eq:Sestimate} in the reasoning performed at \eqref{eq:afterSestimate}, we conclude the desired bound $$\int_{I_{N_i, M_i}} \langle \lambda + \Delta) \rangle^{-1-\delta} d\zeta \les N_{\ast 3}^{-1/2} \approx \frac{1}{NM_3}$$ where we are using that $M_3 = M_{3,\rm{min}} \approx N_{\ast 3}^{1/2} /N.$
\end{proof}

Now, we state a modified version of the Lemma where the integral is performed in other variables.

\begin{corollary} \label{cor:integral} 
Fix some dyadic numbers given by the tuple $\iota = (N_i, N_{\ast i}, M_i)_{i=1}^3$, and some $\lambda \in \mathbb R$. Assume that either $M_2 > M_{2, \rm{min}}$ or $N_{\ast 2} \neq 1$. For any fixed $n_2, m_2, \zeta$, we have:
\begin{equation}  \label{eq:sweden2}
\int_{\bar S^{n_2, m_2, \zeta}_\iota} \frac{ d \bar \sigma_{n_2, m_2, \zeta} }{ \langle \lambda - \Delta \rangle^{1+\delta}  } \les \frac{1}{NM_2} 
\end{equation}
where the implicit constant inside $\les$ is independent of $\lambda$. Analogously to \ref{lemma:integral}, we stress that $\bar S^{n_2, m, \zeta} \subset \bar S^{n_2, m_2, \zeta}$ so the bound also holds integrating over that second set. 
\end{corollary}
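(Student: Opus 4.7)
The plan is to reduce \eqref{eq:sweden2} to Lemma \ref{lemma:integral} by exploiting the symmetry that interchanges the second and third frequencies. The defining constraints of $\bar S$, namely $\nu + \zeta + \xi = 0$ and $k_2 + m_2 + n_2 = 0$, together with the expression $\Delta = \phi_1 + \phi_2 + \phi_3$, are invariant under the relabeling $(\zeta, m_2) \leftrightarrow (\xi, n_2)$. Under this swap $\theta_2$ and $\theta_3$ are exchanged, so the dyadic parameters transform as $M_2 \leftrightarrow M_3$, $N_2 \leftrightarrow N_3$ and $N_{\ast 2} \leftrightarrow N_{\ast 3}$, while $M_1$, $N_1$, $N_{\ast 1}$ are preserved.

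Consequently, an integral over $\bar S^{n_2, m_2, \zeta}_\iota$ becomes, after the relabeling, an integral over a set of the form $\bar S^{n'_2, m'_2, \xi'}_{\iota'}$ with swapped dyadic parameters. The hypothesis ``$M_2 > M_{2,\rm{min}}$ or $N_{\ast 2} \neq 1$'' translates exactly to the hypothesis ``$M'_3 > M'_{3,\rm{min}}$ or $N'_{\ast 3} \neq 1$'' of Lemma \ref{lemma:integral}, whose conclusion $\tfrac{1}{NM'_3}$ becomes $\tfrac{1}{NM_2}$, yielding \eqref{eq:sweden2}.

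I do not expect any real obstacle here, since the roles played by the second and third frequencies are entirely symmetric at the level of the definitions involved. As a sanity check, one could instead redo the argument directly: parametrize $\bar\sigma_{n_2,m_2,\zeta}$ by $\xi$ (with $k_2 = -n_2-m_2$, $\nu = -\xi-\zeta$, and two choices for $k_1, n_1$), compute
\[
\partial_\xi \Delta = 3\xi^2 + n_2^2 - 3(\xi+\zeta)^2 - (n_2+m_2)^2 = \theta_3^2 - \theta_1^2,
\qquad \partial_\xi^2 \Delta = -6\zeta,
\]
note that nonemptiness of $\bar S^{n_2, m_2, \zeta}_\iota$ forces $\theta_1 + \theta_3 \approx N$, so $|\partial_\xi\Delta| \approx N M_2$ when $M_2 > M_{2,\rm{min}}$, while $|\partial_\xi^2 \Delta| \approx N_{\ast 2}$ when $N_{\ast 2} > 1$, and then run the same dyadic superlevel-set decomposition as in \eqref{eq:Sestimate}--\eqref{eq:afterSestimate}.
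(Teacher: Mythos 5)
Your proposal is correct and follows exactly the paper's own one-line proof: the Corollary is obtained from Lemma \ref{lemma:integral} by the variable swap $(n, \xi, \tilde\xi, N_3, N_{\ast 3}, M_3) \leftrightarrow (m, \zeta, \tilde\zeta, N_2, N_{\ast 2}, M_2)$, under which the hypothesis and conclusion of the Lemma transform into those of the Corollary. Your direct sanity-check computation ($\partial_\xi\Delta = \theta_3^2 - \theta_1^2$, $\partial_\xi^2\Delta = -6\zeta$) is also accurate and consistent with the proof of Lemma \ref{lemma:integral}.
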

\begin{proof} The proof follows directly from \eqref{eq:sweden2} after swapping the names of the variables as follows $$(n, \xi, \tilde \xi, N_3, N_{\ast 3}, M_3) \leftrightarrow (m, \zeta, \tilde \zeta, N_2, N_{\ast 2}, M_2)$$
\end{proof}


Now, we state a slightly modified version of Lemma \ref{lemma:integral}. In this case, we restrict the interval to a unit interval (integrating in $\tilde{\zeta}$ instead of $\zeta$) and we show that in that case, we can remove the assumption that $M_3 > M_{3, \rm{min}}$ or $N_{\ast 3} \neq 1$.

\begin{corollary} \label{cor:finland} 
Fix some dyadic numbers given by the tuple $\iota = (N_i, N_{\ast i}, M_i)_{i=1}^3$, and some $\lambda \in \mathbb R$. We have:
\begin{equation}  \label{eq:finland}
\int_{\bar S^{n, m, \xi}_\iota}  \frac{  d \bar \sigma_{n, m, \xi} }{\langle \lambda - \Delta \rangle^{1+\delta}} \les_\delta \frac{1}{NM_3} 
\end{equation}
and
\begin{equation}  \label{eq:finland2}
\int_{\bar S^{n, m, \zeta}_\iota}  \frac{d \bar \sigma_{n, m, \zeta}  }{\langle \lambda - \Delta \rangle^{1+\delta} } \les_\delta \frac{1}{NM_2} 
\end{equation}
where the implicit constant inside $\les$ is independent of $\lambda$. 
\end{corollary}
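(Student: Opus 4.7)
The plan is to reduce both inequalities to Lemma \ref{lemma:integral} and Corollary \ref{cor:integral} whenever those results apply, and to dispatch the only remaining case with a trivial estimate exploiting that the domain of integration has bounded measure.

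First I would observe the inclusions $\bar S^{n, m, \xi}_\iota \subset \bar S^{n_2, m_2, \xi}_\iota$ and $\bar S^{n, m, \zeta}_\iota \subset \bar S^{n_2, m_2, \zeta}_\iota$, since pinning the full integer vectors $n$ and $m$ is a stronger restriction than pinning only their second components. Therefore, when $M_3 > M_{3, \rm{min}}$ or $N_{\ast 3} \neq 1$, the bound \eqref{eq:finland} follows immediately from Lemma \ref{lemma:integral} applied to the larger set; analogously, when $M_2 > M_{2, \rm{min}}$ or $N_{\ast 2} \neq 1$, the bound \eqref{eq:finland2} follows from Corollary \ref{cor:integral}.

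Next I would handle the only remaining case for \eqref{eq:finland}, namely $M_3 = M_{3, \rm{min}}$ together with $N_{\ast 3} = 1$. By the very definition of $M_{3, \rm{min}}$ as the smallest dyadic number greater than or equal to $N_{\ast 3}^{1/2}/N = 1/N$, one has $M_{3, \rm{min}} \in [1/N, 2/N)$, so $NM_3 \approx 1$ and the desired inequality is equivalent to
\begin{equation*}
\int_{\bar S^{n, m, \xi}_\iota} \frac{d\bar\sigma_{n, m, \xi}}{\langle \lambda - \Delta \rangle^{1+\delta}} \lesssim 1.
\end{equation*}
Since the integrand is bounded by $1$, it suffices to show that the integration domain has $O(1)$ total measure. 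But once $n$, $m$ and $\xi$ are fixed we have $k_2 = -n_2 - m_2$ determined, $\zeta = m_1 + \tilde\zeta$ restricted to the unit interval $(m_1 - 1, m_1 + 1)$, and $k_1 \in \{\lfloor \nu \rfloor, \lceil \nu \rceil\}$ with $\nu = -\xi - \zeta$, leaving at most two choices. Hence the total measure is indeed $O(1)$, which settles \eqref{eq:finland}.

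The remaining case for \eqref{eq:finland2} is $M_2 = M_{2, \rm{min}}$ together with $N_{\ast 2} = 1$, and it is handled identically: one again gets $NM_2 \approx 1$, and the domain $\bar S^{n, m, \zeta}_\iota$ is swept out by letting $\xi \in (n_1 - 1, n_1 + 1)$ with $k_1$ ranging over at most two integers, so the bare measure estimate suffices. The whole argument is essentially bookkeeping around Lemma \ref{lemma:integral}, and I do not expect any genuine obstacle: the cases excluded from that lemma are exactly those where $NM_i \approx 1$, which is precisely the regime where the trivial bound over a unit interval matches the target.
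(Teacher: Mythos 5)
Your proof is correct and takes essentially the same approach as the paper: reduce to Lemma \ref{lemma:integral} and Corollary \ref{cor:integral} via the inclusions $\bar S^{n,m,\xi}_\iota\subset\bar S^{n_2,m_2,\xi}_\iota$ and $\bar S^{n,m,\zeta}_\iota\subset\bar S^{n_2,m_2,\zeta}_\iota$, and in the one excluded regime $(M_3,N_{\ast 3})=(M_{3,\rm{min}},1)$ (resp.\ $(M_2,N_{\ast 2})=(M_{2,\rm{min}},1)$) observe that $NM_3\approx 1$ (resp.\ $NM_2\approx 1$) and that fixing the full integer vectors $n,m$ restricts the remaining real variable to a unit-length interval, so the trivial measure bound closes the case. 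Your write-up is in fact a bit more explicit than the paper's about which variable is free and why the domain has $O(1)$ measure, but the argument is the same.
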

\begin{proof} Let us just show \eqref{eq:finland}, being the argument analogous for \eqref{eq:finland2}. In the case where $M_3 > M_{3, \rm{min}}$ or $N_{\ast 3} > 1$ this follows directly from Lemma \ref{lemma:integral}, noting that \eqref{eq:finland} corresponds to the integral in \eqref{eq:sweden2} restricted to the interval $[m_2, m_2+1)$. Otherwise, we have $NM_3 = N M_{3, \rm{min}} \approx  NN_{\ast 3}^{1/2}/N = N_{\ast 3}^{1/2} = 1$ so the right hand side of \eqref{eq:finland} is $1$. Then \eqref{eq:finland} follows from the fact that $\zeta \in [m_2, m_2+1)$ so we are integrating over a $O(1)$ measure set.

\end{proof}

Using Corollary \ref{cor:finland} to estimate the integral on \eqref{eq:lesotho}, we get
\begin{align*}
\mathbb E_\omega \left( \| \langle \p_{x_1} \rangle ( (S(t)u_0^\omega)^2) \|_{X^{s, -\frac12+2\delta}}^2  \right)
&\les 1+ \sum_{\iota} \frac{N_{\ast 3}^2 N_3^{2s} \log (N)}{N_1^{2 \alpha} N_2^{2 \alpha}} \int_{\tilde \tau}  \sum_n \int_{\tilde{\xi}} \sum_{m} \int_{\bar S^{n, m, \xi}_\iota} \frac{ | h( \tilde \tau + \Delta )| d \bar \sigma_{n, m, \xi} }{ 
\langle\tilde \tau  \rangle^{1-4\delta}  } \frac{1}{NM_3} \\
&\les 1+ \sum_{N_i} \sum_{M_3, M_2} \frac{N_3^{2s} N_{\ast 3}^2 \log (N)}{N_1^{2 \alpha} N_2^{2 \alpha}} \cdot \frac{1}{NM_3} 
 \int_{\bar S_\iota} \int_{\tilde \tau}  \frac{|h( \tilde \tau + \Delta )| }{\langle \tilde \tau  \rangle^{1-4\delta} } d\bar \sigma
\end{align*}
Using the rapid decay of $h$, we have that
\begin{equation*}
 \int_{\tilde \tau} \frac{|h( \tilde \tau + \Delta )| }{\langle \tilde \tau  \rangle^{1-4\delta} }
  \les \int_{\tilde \tau} \frac{1}{\langle \tilde \tau  \rangle^{1-4\delta} \langle \tilde \tau + \Delta \rangle^{100} }
  \les \frac{1}{\langle \Delta \rangle^{1-4\delta}}.
\end{equation*}
Thus, it suffices to bound
\begin{equation*}
\sum_{\iota}  \frac{N_3^{2s}N_{\ast 3}^2 \log (N)}{N_1^{2 \alpha} N_2^{2 \alpha} } \cdot \frac{1}{NM_3}
 \int_{\bar S_\iota} \frac{d \bar \sigma}{\langle \Delta  \rangle^{1-4\delta} }.
\end{equation*}

Using that $\langle \Delta \rangle \les N^3$ and, it suffices to show
\begin{equation} \label{eq:Xi}
\Xi_{\iota} :=  \frac{N_3^{2s} N_{\ast 3}^2  }{N_1^{2 \alpha} N_2^{2 \alpha}} \cdot \frac{1}{NM_3}
 \int_{\bar S_\iota} \frac{d\bar \sigma}{\langle \Delta  \rangle^{1+\delta} } \les N^{-\eps}
\end{equation}
for every $\iota$ and some $\eps$ that will be as small as we want, but with $\delta \ll \eps$. The tuple $\iota$ will encode dyadic numbers $N_i$, $N_{\ast i}$, $M_i$. We will now study this quantity according to different cases. We denote by $c$ some small constant independent of the frequencies to be fixed later.

\begin{enumerate}
\item Case $\mc C_{1}$: $M_3 \geq N/1000$ (large derivative in $ \zeta$ direction)
\item Case $\mc C_2$: $M_3 < N/1000$ and $M_2 \geq N/1000$ (large derivative in $\xi$ direction)
\item Case $\mc C_3$: $c\leq \max \{ M_2, M_3 \} \les N/1000$ (some derivative is $\gtrsim 1$)
\item Case $\mc C_4$: $M_2, M_3 < c$ and $| \Delta | \geq N^2$ (very small derivatives but large $\Delta$)
\item Case $\mc C_5$: $M_2, M_3 < c$, $| \Delta | \leq N^2$ (equillateral triangle configuration).

\end{enumerate}

\subsubsection{$\mc C_1$. Case $M_3 \geq N / 1000$ }
Given that $N_3 \les N$ and $\max \{ N_1, N_2 \} \gtrsim N$, we have
\begin{align*}
\Xi_{\iota} &\les
   \frac{  N^{2s} N_{\ast 3}^2 }{N^{2\alpha} \min \{ N_1 , N_2\}^{2\alpha} N^2}  \sum_{n_2, m_2} \int_\xi \int_{\bar S_\iota^{n_2, m_2, \xi}} \frac{ d\bar \sigma_{n_2, m_2, \xi} }{\langle \Delta \rangle^{1+\delta}}  \les 
   \frac{  N^{2s+2} }{N^{2\alpha} \min \{ N_1 , N_2\}^{2\alpha} N^2 } \sum_{n_2} \sum_{m_2} \int_{\xi} \frac{\mathbbm{1}_{{\bar S_\iota^{n_2, m_2, \xi}} \neq \emptyset } }{N^2}.
\end{align*}
where we applied Lemma \ref{lemma:integral} and used that $M_3 \gtrsim N$. Now, the condition $\mathbbm{1}_{{\bar S_\iota^{n_2, m_2, \xi}} \neq \emptyset }$ implies that $|m_2| \les N_2$ and $|-m_2-n_2| = |k_2| \les N_1$. Once $n_2$ is fixed, those conditions define an interval of size smaller than $2\min \{ N_1 , N_2\}$ where $m_2$ needs to lie. Both $n_2$ and $\xi$ range over intervals of length $\les N$. From the previous estimate, we obtain
\begin{equation*}
\Xi_\iota \les  
   \frac{  N^{2s} N_{\ast 3}^2 }{N^{2\alpha}}   \frac{\min \{ N_1 , N_2\}}{ \min \{ N_1 , N_2\}^{2\alpha} } \frac{N^2}{N^4} \les 
   N^{2(s-\alpha)} \min \{ N_1 , N_2\}^{2(1/2-\alpha)} \les N^{-\eps}
\end{equation*}
using that $\alpha > s$ (recall $s = (1/2)^+$).

\subsubsection{Remark for all remaining cases}

For all the rest of subcases, we will have that $M_3 \leq N/1000$. In particular, this means that $| \theta_1 - \theta_2 | \leq N/500$. Given that $\theta_1 + \theta_2 > | ( \nu, k_2) | + | (\zeta, m_2) |$ is bigger than the maximum frequency, we have that $\theta_1 + \theta_2 > N$. Combining this with $| \theta_1 - \theta_2 | \leq N/500$, we get \begin{equation} \label{eq:boundtheta}
\theta_1 \geq 4N/10, \qquad \mbox{ and } \qquad \theta_2 \geq 4N/10.
\end{equation}
 Finally, since $N_j > \theta_j /\sqrt 3$, we have that $$N_1, N_2 \gtrsim N.$$ We will use this for all the remaining subcases. Thus, we will have to show
 \begin{equation} \label{eq:Xi2}
 	\Xi_{\iota} =  \frac{N_3^{2s} N_{\ast 3}^2  }{N^{4 \alpha}} \cdot \frac{1}{NM_3}
 	\int_{\bar S_\iota} \frac{d\bar \sigma}{\langle \Delta  \rangle^{1+\delta} } \les N^{-\eps}.
 \end{equation}

\subsubsection{$\mc C_2$. Case $M_3 < N/1000$ but $M_2 \geq N/1000$} 
Using Corollary \ref{cor:integral} and $M_2 \gtrsim N$, we get that
\begin{align*}
\Xi_{\iota} &\les 
   \frac{  N^{2s} N_{\ast 3}^2 }{N^{4\alpha} } \frac{1}{NM_3} \sum_{n_2, m_2} \int_{\zeta} \int_{\bar S^{n_2, m_2, \zeta}} \frac{ d \bar \sigma_{n_2, m_2, \zeta} }{\langle \Delta \rangle^{1+\delta}}
   \les  
    N^{2s - 4 \alpha} \frac{N_{\ast 3}^2}{N_{\ast 3}^{1/2}} \sum_{m_2, n_2} \int_{\zeta}  \frac{\mathbbm{1}_{\bar S^{n_2, m_2, \zeta} \neq \emptyset}}{N^2}, 
\end{align*}
where we used that $M_3 \geq M_{3, \mathrm{min}} = N_{\ast 3}^{1/2} / N$. Given that $m_2, n_2, \zeta$ all range over intervals of length $\les N$, we conclude that $\Xi_\iota \les N^{2s+5/2-4\alpha}$ which is smaller than $N^{-\eps}$ as long as $\alpha > \frac78$, for $s-1/2$ sufficiently small depending on $\alpha$.

\subsubsection{Remark for all remaining cases}

Let us recall Lemma \ref{lemma:localization} from the deterministic part, and take $w_i$ to be $(\sqrt{3}\nu, k_2)$, $\sqrt{3} \zeta, m_2)$ and $(\sqrt{3} \xi, n_2)$ respectively. All our following cases will satisfy that $M_2, M_3 \leq N/1000$ and therefore, we are under the hypothesis of Lemma \ref{lemma:localization} for $\mathcal E = 2(M_2+M_3)$. As a consequence
\begin{equation} \label{eq:localization_inhomo}
	( \sqrt{3} \nu, k_2) \in R_{\pm 2\pi/3} (\sqrt{3} \xi, n_2) + B  \qquad \mbox{and} \qquad ( \sqrt{3} \zeta, m_2) \in R_{\mp 2\pi/3} (\sqrt{3} \xi, n_2) + B,
\end{equation}
where we recall that $R_\varphi$ refers to the rotation of angle $\varphi$ and $B$ is a ball of radius $O(\mathcal E) = O(M_2 + M_3)$.

\subsubsection{$\mc C_3$. Case $c\leq \max \{ M_2, M_3 \} \leq N/1000$ . }

We have
\begin{align} \label{eq:mauritius}
\Xi_{\iota} &\les 
   \frac{  N^{2s} N_{3\ast}^2 }{N^{4\alpha} }  \frac{1}{M_3N}  \int_{\bar S_\iota} \frac{1}{\langle \Delta \rangle^{1+\delta}} .
\end{align}

We consider two subcases depending if $M_3 \geq M_2$ or not. If $M_3 \geq M_2$, we use Lemma \ref{lemma:integral} and obtain:
\begin{align*}
 \int_{\bar S_\iota} \frac{d \bar \sigma}{\langle \Delta \rangle^{1+\delta}} \les  \sum_{n_2} \int_{\xi} \sum_{m_2} \int_{\bar S^{n_2, m_2, \xi}_\iota}\frac{d \bar \sigma_{n_2, m_2, \xi}}{\langle \Delta \rangle^{1+\delta}} \les \sum_{n_2} \int_{\xi} \sum_{m_2} \frac{ \mathbbm{1}_{ \bar S^{n_2, m_2, \xi}_\iota \neq \emptyset } }{NM_3} \les \frac{N^2 M_3}{NM_3} = N 
\end{align*}
In the last inequality we used \eqref{eq:localization_inhomo} to justify that $m$ is in some box of side $\les \max \{ M_3 , M_2 \} = M_3 \geq c$ for fixed $(\xi, n_2)$.

Let us now consider the case where $M_3 < M_2$ (in particular, $M_2 > c$). A similar reasoning using Corollary \ref{cor:integral} and \eqref{eq:localization_inhomo} yields
\begin{align*}
 \int_{\bar S_\iota} \frac{1}{\langle \Delta \rangle^{1+\delta}} \les  \sum_{m_2} \int_{\zeta} \sum_{n_2} \int_{\bar S_\iota^{n_2, m_2, \zeta}} \frac{1}{\langle \Delta \rangle^{1+\delta}} \les \sum_{m_2} \int_{\zeta} \sum_{n_2}  \frac{ 
 \mathbbm{1}_{ \bar S_\iota^{n_2, m_2, \zeta} \neq \emptyset }  }{NM_2} \les \frac{N^2 M_2}{NM_2} = N
\end{align*}

All in all, we get from \eqref{eq:mauritius} that
\begin{align*}
\Xi_\iota &\les  
   \frac{  N^{2s} N_{\ast 3}^2 }{N^{4\alpha} }  \frac{1 }{M_3N}  N \les N^{2s+1-4\alpha} N_{\ast 3}^2  \frac{1 }{N_{\ast 3}^{1/2}}
 \les N^{2s+5/2-4\alpha},\end{align*}
 using that $M_3\geq M_{3, \mathrm{min}} \approx N_{3\ast}^{1/2}/N$. We get $\Xi_\iota \les N^{-\eps}$ as long as $\alpha > \frac78$.


\subsubsection{$\mc C_4$. Case $M_2, M_3 < c$ and $|\Delta | \geq N^2$}

Under $M_2, M_3 < c$, we have that \eqref{eq:localization_inhomo} implies $| \bar S_\iota | \les N^2$, since once $(\xi, n_2)$ is fixed, we know that $(\zeta, m_2)$ is in a circle of measure $O(1)$ from \eqref{eq:localization_inhomo}.

Since our division according to $\iota$ does not contemplate the value of $\Delta$, we call $\Xi_{\iota}^{(1)}$ to the contribution to $\Xi_\iota$ that comes from the region where $| \Delta| \geq N^2$.  Using that lower bound on $\Delta$, together with $| \bar S_\iota | \les N^2$ and $M_3 \geq M_{3, \mathrm{min}}$, we have:
\begin{align} 
\Xi_{\iota}^{(1)} & \les  \frac{N^{2s} N_{\ast 3}^2 }{N^{4\alpha}} \frac{1}{M_3 N}  \int_{\bar S_\iota} \frac{ \mathbbm{1}_{| \Delta | \geq N^2} d \bar \sigma }{\langle \Delta \rangle } 
\les  \frac{N^{2s} N_{\ast 3}^2 }{N^{4\alpha}} \frac{1}{N_{\ast 3}^{1/2} }   \frac{| \bar S_\iota |}{N^2} \les N^{3/2+2s-4\alpha},
\end{align}
which is smaller than $N^{-\eps}$ for any $\alpha > \frac58$ provided $s-1/2, \eps$ are sufficiently small.

\subsubsection{$\mc C_5$. Case $M_3, M_2 \leq 1$, and $| \Delta | \leq N^2$}

We refer to $\Xi_\iota^{(2)}$ as the contribution to $\Xi_\iota$ coming from $| \Delta | < N^2$. We also note that $M_2, M_3 < c$ implies $|\theta_3 - \theta_1|, | \theta_2-\theta_1| \les 1$. Taking $c$ sufficiently small to beat the implicit constants on Lemma \ref{cor:coro} (but $c = O(1)$), we see that Lemma \ref{cor:coro} yields contradiction with $|\Delta| \leq N^2$ if $\min \{ | \xi |, | \zeta |, |  \nu |\} \gg 1 $, because we would be satisfying the hypothesis of Lemma \ref{cor:coro} but not the conclusion. Thus, we fix such value of $c$, and we have that $$\min \{ | \xi |, | \zeta |, |  \nu |\} \les 1.$$ 

Now, by \eqref{eq:localization_inhomo}, we have that $\int_{\bar S_\iota} \mathbbm{1}_{| \Delta | \geq N^2} d\bar \sigma \les N$. For example, if $\zeta$ attains the minimum, it runs over an $O(1)$ interval, $m_2$ runs over a $O(N)$ set; and after fixing those, \eqref{eq:localization_inhomo} gives us balls of measure $O(1)$ for the other variables. The same argument is clearly also valid if $\nu$ or $\xi$ attain the minimum.

Thus, we can simply bound:
\begin{equation*}
	\Xi_\iota^{(2)} \les \frac{N^{2s} N_{\ast 3}^2 }{N^{4\alpha}} \frac{1}{NM_3} \int_{\bar S_\iota} \mathbbm{1}_{| \Delta | \leq N^2} \les N^{2s - 4\alpha} N_{\ast 3}^2 \frac{1}{N_{\ast 3}^{1/2}} N \leq N^{2s + 5/2 - 4\alpha}
\end{equation*}
which is smaller than $N^{-\eps}$ for $\alpha > \frac78$ as long as $s-1/2$ and $\eps$ are sufficiently small.

\subsection{Proof of Proposition \ref{prop:mixed}} \label{subsec:mixed}

The objective of this section is to show Proposition \ref{prop:mixed}. Let us recall from \eqref{eq:uracilo} that
\begin{equation*}
\widehat{S(t)u_0^\omega}(\nu, k_2, \mu) = \sum_{k_1 = \lfloor \nu \rfloor}^{\lceil \nu \rceil} \frac{g_k^\omega z_k (\tilde \nu )}{ \langle k \rangle^\alpha} \delta (\mu - \phi_1).
\end{equation*}
Moreover if we apply the time localization $\chi (t)$ whose Fourier Transform $h$ decays rapidly, we obtain
\begin{equation} \label{eq:onett}
\mathcal F \left[\chi(t) S(t)u_0^\omega \right] ( \nu, k_2, \mu) = \sum_{k_1 = \lfloor \nu \rfloor}^{\lceil \nu \rceil} \frac{g_k^\omega}{ \langle k \rangle^\alpha} h (\mu - \phi_1).
\end{equation}

For any $v \in Z^{s, \frac12 + \delta}$, we may write 
\begin{equation} \label{eq:two}
\hat v (\zeta , m_2, \eta ) =  \frac{1}{ \langle m \rangle^s} \frac{1}{\langle \eta - \phi_2 \rangle^{\frac12 +\delta} } \hat v_{\mathrm{ren}} (\zeta, m_2, \eta) 
\end{equation}
so that $v_{\mathrm{ren}}$ is a renormalized version of $\hat v$ satisfying 
\begin{align} \begin{split} \label{eq:marcus}
	\| v \|_{Z^{s, \frac12 + \delta }} &\geq \| v \|_{Y^{s, \frac12 + \delta}} = \left\| \frac{\langle \zeta \rangle}{|\zeta|} \hat v_{\rm{ren}} \right\|_{L^2} \geq \| \hat v_{\rm{ren}} \|_{L^2} \\
	\| v \|_{Z^{s, \frac12 + \delta}} &=
	\left\| \frac{\langle \zeta \rangle }{|\zeta |}\hat v (\zeta, m_2, \eta) \cdot \langle \eta - \phi (\zeta, m_2) \rangle^{\frac12 + \delta} \right\|_{L^\infty_{\zeta, m_2}(L^2_\eta)} =  \left\| \frac{\langle \zeta \rangle }{|\zeta |} \hat v_{\rm{ren}} (\zeta, m_2, \eta) \langle m \rangle^{-s} \right\|_{L^\infty_{\zeta, m_2} (L^2_\eta)} 
\end{split} \end{align}
 Then, we write
\begin{equation*}
\hat v_{\mathrm{ren}} (\zeta, m_2, \eta ) = \psi (\eta - \phi_2) b_{\eta - \phi_2} (\zeta, m_2)
\end{equation*}
where $\| b_{\tilde \eta} \|_{L^2} = 1$ for all $\tilde \eta$ and therefore $\| \hat v_{\mathrm{ren}} \|_{L^2} = \| \psi \|_{L^2}$. That is, we are decomposing $v_{\rm{ren}}$ as a sum of linear solutions localized at each curve $\eta - \phi_2 = \lambda$, each with amplitude $\psi (\lambda )$. The decomposition in terms of the original $v$ is written as
\begin{equation*}
\psi (\tilde \eta ) = \langle \tilde \eta\rangle^{\frac12 + \delta} \left( \sum_{m_2} \int_{\zeta} 
 \langle m \rangle^{2s} | \hat v (\zeta, m_2, \tilde \eta + \phi_2) |^2 \right)^{1/2} \quad \mbox{ and } 
\quad 
b_{\tilde \eta }(\zeta, m_2) = \frac{\langle m \rangle^s 
	 \hat v (\zeta , m_2, \tilde \eta + \phi (\zeta, m_2) )}{ \left( \sum_{k_2} \int_{\nu} \langle k \rangle^{2s} 
	  | \hat v (\nu, k_2, \tilde \eta + \phi (\nu, k_2) |^2 \right)^{1/2} }.
\end{equation*}
With such decomposition $\| b_{\tilde \eta} \|_{L^2} = 1$ for all $\tilde \eta \in \mathbb R$ and $\| \psi (\tilde \eta) \|_{L^2}^2 = \| v \|_{Y^{s, b}}^2$.

Just as in Subsection \ref{subsec:inho}, we will work with the $S$, $\bar S$ sets and their restrictions like $S^{n, \xi}$, $\bar S^{n, \xi}$ describing the possible tuples of frequencies. Now, from equations \eqref{eq:onett}--\eqref{eq:two}:
\begin{align*}
\mathcal F \left[ (\chi (t) S(t)u_0^\omega v) \right] (-\xi, -n_2, -\tau)
 = i \xi \int_{\bar S^{n, \xi}} \frac{1}{\langle k \rangle^\alpha \langle m \rangle^s}  \int_\eta
 g_{k}^\omega z_k (\tilde \nu)  h (-\tau - \eta - \phi_1) \frac{\psi (\eta - \phi_2)}{\langle \eta - \phi_2 \rangle^{\frac12 + \delta} } b_{\eta - \phi_2} (\zeta, m_2) 
\end{align*}
Now, if we look at the $Y^{s,-\frac12+2\delta}$ norm, we have 
\begin{align*}
\left\| \p_x  (\chi (t) S(t)u_0^\omega v) \right\|_{Y^{s, -\frac12+2\delta}}^2 &\les
\sum_{n_2} \int_\xi \langle n \rangle^{2s} | \xi |^2
\int_{\tau} \frac{1}{\langle \tau - \phi_3 \rangle^{1-4\delta}} \\
&\qquad \cdot
\left|    \int_{\bar S^{n, \xi}}\int_{\eta} \frac{g_{k}^\omega z_k (\tilde \nu)}{ \langle k \rangle^{2\alpha} \langle m \rangle^{2s} }
h (-\tau - \eta - \phi_1) \frac{\psi (\eta - \phi_2)}{\langle \eta - \phi_2 \rangle^{\frac12 + \delta} } b_{\eta - \phi_2} (\zeta, m_2)    d\bar \sigma_{n, \xi }\right|^2
\end{align*}
Doing the changes of variables $\tilde \eta = \eta - \phi_2 = \eta - \phi(\zeta, m_2)$ and $\tilde \tau = \tau - \phi_3 = \tau - \phi (\xi, n_2)$, we have that:
\begin{align}
&\left\|  \p_x  (\chi (t) S(t)u_0^\omega v) \right\|_{Y^{s, -\frac12+2\delta}}^2 \notag \\
&\qquad \les
\sum_{n_2} \int_\xi  \int_{\tilde \tau}  
\int_{\tau} \frac{ \langle n \rangle^{2s} | \xi |^2 }{\langle \tilde \tau \rangle^{1-4\delta}}
\left|   \int_{\bar S^{n, \xi} }\int_{\tilde \eta} g_{k}^\omega  z_k (\tilde \nu)
h (-\tilde \tau - \tilde \eta - \Delta) \frac{\psi (\tilde \eta )}{\langle \tilde \eta \rangle^{\frac12 + \delta} } b_{\tilde \eta } (\zeta, m_2)  \right|^2 \label{eq:fruta}
\end{align}
where we used $\Delta = \phi_1 + \phi_2 + \phi_3$. Now, we decompose according to our dyadic partition $\iota$, where $\iota$ includes all the dyadic numbers $N_i$, $N_{\ast i}$, $M_i$ but in this case we do not include $M_1$. We obtain that
\begin{equation}
\mathbb E_{\omega} \left\|  \p_x  (\chi (t) S(t)u_0^\omega v) \right\|_{Y^{s,-\frac12+2\delta}}^2 \leq \sum_{\iota} \log (N)^{10} \mathbb E_{\omega} \Theta_\iota
\end{equation}
where
\begin{equation} \label{eq:Theta}
\Theta_\iota = \frac{N_3^{2s} N_{\ast 3}^2}{N_1^{2\alpha} N_2^{2s}} 
\sum_{n_2} \int_{\xi} \int_{\tilde \tau} \frac{1}{\langle \tilde  \tau  \rangle^{1-4\delta}}
\left|   \int_{\bar S^{n, \xi}_\iota }\int_{\tilde \eta} g_{k}^\omega z_k (\tilde \nu)
h (-\tilde \tau - \tilde \eta - \Delta) \frac{\psi (\tilde \eta )}{\langle \tilde \eta \rangle^{\frac12 + \delta} } b_{\tilde \eta } (\zeta, m_2)  d\bar \sigma_{n, \xi} \right|^2
\end{equation}
The factor $\log (N)^{10}$ accounts for all the dyadic partitions with respect to $N_i$, $N_{\ast i}$ and $M_i$

Therefore, in order to show our Proposition \ref{prop:mixed}, it suffices to show that
\begin{equation} \label{eq:objective_mixed}
\mathbb E_\omega \Theta_\iota \les N^{-\eps} \| v \|_{Z^{s, \frac12 + \delta}}, \qquad \forall v \in Y^{s, \frac12 + \delta}
\end{equation}
with a constant that is independent of $v$.

Using that $\langle \tilde \tau + \tilde \eta \rangle \les  \langle \Delta \rangle  \cdot \langle \tilde \tau + \tilde \eta + \Delta \rangle$, we have
\begin{equation*}
\Theta_\iota \les \frac{N_3^{2s} N_{\ast 3}^2}{N_1^{2\alpha} N_2^{2s}} 
\sum_{n_2} \int_{\xi} \int_{\tilde \tau} \frac{1}{\langle \tilde  \tau  \rangle^{1-4\delta}}
\left|   \int_{\bar S^{n, \xi}_\iota }\int_{\tilde \eta} g_{k}^\omega z_k (\tilde \nu)
h (-\tilde \tau - \tilde \eta - \Delta) \langle \tilde \tau + \tilde \eta + \Delta \rangle^{3\delta} \frac{\psi (\tilde \eta ) \cdot  \langle \Delta \rangle^{3\delta} }{\langle \tilde \eta \rangle^{1/2+\delta} \langle \tilde \eta + \tilde \tau \rangle^{3\delta} } b_{\tilde \eta } (\zeta, m_2)  d\bar \sigma_{n, \xi} \right|^2.
\end{equation*}
Now, recall that $h$ decays arbitrarily fast, so the term $ \langle \tilde \tau + \tilde \eta + \Delta \rangle^{3\delta}$ is superfluous. One can define $\tilde h (x) = h(x) \langle x \rangle^{3\delta}$ and it would satisfy the same properties as $h$ (real, even, and exponential decay), and use it as $h$. With an abuse of notation, we denote the new $h$ by $h$ as well. With respect to $\langle \Delta \rangle^{3\delta}$ we simply bound it by $N^{9\delta}$, since $| \Delta | \les N^3$. Thus, we have
\begin{equation} \label{eq:preCS}
	\Theta_\iota \les N^{9\delta}_{\rm{max}}  \frac{N_3^{2s} N_{\ast 3}^2}{N_1^{2\alpha} N_2^{2s}} 
	\sum_{n_2} \int_{\xi} \int_{\tilde \tau} \frac{1}{\langle \tilde  \tau  \rangle^{1-4\delta}}
	\left|   \int_{\bar S^{n, \xi}_\iota }\int_{\tilde \eta} g_{k}^\omega z_k (\tilde \nu)
	h (-\tilde \tau - \tilde \eta - \Delta)  \frac{\psi (\tilde \eta )  }{\langle \tilde \eta \rangle^{1/2+\delta} \langle \tilde \eta + \tilde \tau \rangle^{3\delta} } b_{\tilde \eta } (\zeta, m_2)  d\bar \sigma_{n, \xi} \right|^2
\end{equation}

We take expectation and do Cauchy-Schwarz in $\tilde \eta$:
\begin{align*} 
	\mathbb E_\omega (\Theta_\iota)  &\les \| \psi \|_{L^2}^2 \cdot N_{\rm{max}}^{9\delta}
	\frac{N_3^{2s} N_{\ast 3}^2}{N_1^{2\alpha} N_2^{2s}} 
	\int_{\tilde \eta}  \int_{\tilde \tau} \frac{1}{\langle \tilde  \tau  \rangle^{1-4\delta} \cdot \langle \tilde \tau + \tilde \eta \rangle^{6\delta} \cdot\langle \tilde \eta \rangle^{2b}} \cdot \\
& \qquad \mathbb{E}_\omega	 \left[ \sum_{n_2} \int_{\xi}  
	\left|    \int_{\bar S_\iota^{n_2, \xi}} g_{k}^\omega z_k (\tilde \nu)
	h (-\tilde \tau - \tilde \eta - \Delta)  b_{\tilde \eta } (\zeta, m_2)  d\bar \sigma_{n, \xi}\right|^2 \right]
\end{align*}
Now recall $\| \psi \|_{L^2} \approx \| v \|_{Y^{s, b}} \les \| v \|_{Z^{s, b}}$. Moreover, since the integral in $\tilde \tau, \tilde \eta$ converges to some value depending on $\delta$, we have:
\begin{align} \begin{split}
	\mathbb E_{\omega} \Theta_\iota \les
\| v \|_{Z^{s, b}}^2 N_{\rm{max}}^{9\delta} 	\frac{N_3^{2s} N_{\ast 3}^2}{N_1^{2\alpha} N_2^{2s}} 
	\sup_{\lambda, \tilde \eta}
	\mathbb E_{\omega} \left[ 
	\sum_{n_2} \int_{\xi} 
	\left|   \int_{\bar S^{n, \xi}_\iota}  g_{k}^\omega z_k (\tilde \nu)
	h (\lambda  - \Delta)  b_{\tilde \eta } (\zeta, m_2)   d \bar \sigma_{n, \xi} \right|^2 \right] 
\end{split} \end{align}

We note that the exponential decay of the Gaussians guarantees that $|g_k| \leq N_1^{\delta}$ for all $k\in \mathbb Z^2$ with $N_1 \leq |k| \leq 2N_1$, except for a set of probability $O(1/N_1)$. Since $\sum_{N_1} 1/N_1 < \infty$ (given that $N_1$ is dyadic), Borel-Cantelli allows us to assume $|g_k| \les N_1^{\delta}$ except in a measure-zero set, that we remove. Therefore, putting absolute values on the inner integral, we see that 

\begin{equation} \label{eq:marcus2}
	\mathbb E_\omega \Theta_\iota \les \| v \|_{Z^{s, b}}^2 N_{\rm{max}}^{10\delta}\frac{N_3^{2s} N_{\ast 3}^2}{N_1^{2\alpha} N_2^{2s}} 
	\sup_{\lambda, \tilde \eta}
	\sum_{n_2} \int_{\xi} 
	\left|   \int_{\bar S^{n, \xi}_\iota}  
	h (\lambda  - \Delta)  b_{\tilde \eta } (\zeta, m_2)  d \bar \sigma_{n, \xi} \right|^2  
\end{equation}
In particular, from \eqref{eq:objective_mixed}, it suffices to show
\begin{equation}
\label{eq:objective_def}
\Theta_\iota' := \frac{N_3^{2s} N_{\ast 3}^2}{N_1^{2\alpha} N_2^{2s}} 
\sup_{\lambda, \tilde \eta}
\sum_{n_2} \int_{\xi} 
\left|   \int_{\bar S^{n, \xi}_\iota}  
h (\lambda  - \Delta)  b_{\tilde \eta } (\zeta, m_2)  d \bar \sigma_{n, \xi} \right|^2 \les N^{-\eps},
\end{equation}
for some $\eps$ sufficiently small (that is allowed to depend on $s-1/2$ and $\alpha$, but will be much larger than $\delta$). We
recall $N := N_{\rm{max}}$ and $\| b_{\tilde \eta} \|_{L^2} = 1$ for every $\tilde \eta \in \mathbb R$.

 Now, we divide in four cases: \begin{itemize}
	\item Case $\mathcal C_{\mathrm{bad}}$ that contains the cases $M_2, M_3 \leq 1/2$.
	\item Case $\mathcal C_{\mathrm{high}}$ where $N_1 \geq \frac18 N_{\rm{max}}$ and $\max \{ M_2 ,  M_3 \} \geq 1$. The subcase $\mathcal C_{\rm{res}}$ (see below) is dealt separately.
    \item Case $\mathcal C_{\mathrm{res}}$. A resonant subcase of $\mathcal C_{\rm{high}}$ where we have that $M_2 \leq N^{-12/13}$ and $N_2 \leq 100 N^{2/13}$. This is the specific contribution that needs the $L^\infty$-based norms.
    \item Case $\mathcal C_{\mathrm{low}}$, where $N_1 \leq \frac{1}{16} N_{\rm{max}}$ (which implies $N_1 = N_{\rm{min}}$ and $M_2, M_3 \approx N$).
\end{itemize}

\subsubsection{Case $\mathcal{C}_{\rm{bad}}$}
Let us recall that we include here all cases where $M_2, M_3 < 1/2$. In particular, we are under the situation where $|\theta_i - \theta_j | \les 1$ for all $i, j$. Moreover, using the same argument as in $\mathcal C_{\rm{high}}$, we just need to show \eqref{eq:sky}, which we recall below:  
\begin{equation} \label{eq:sky2}
	\Theta_\iota' := \frac{N_3^{2s} N_{\ast 3}^2}{N_1^{2\alpha} N_2^{2s}} 
	\sup_{\lambda, \tilde \eta}  
	\sum_{n_2} \int_{\xi} 
	\left|   \int_{\bar S^{n, \xi}_\iota}  
	h (\lambda  - \Delta)  |b_{\tilde \eta } (\zeta, m_2)|  d \bar \sigma_{n, \xi} \right|^2  \les N_1^{-\eps / 2}
\end{equation}

Now, since $M_2, M_3 \leq 1$, Lemma \ref{lemma:localization} gives us that $(\sqrt{3}\xi, n_2)$ and $(\sqrt{3}\zeta, m_2)$ live in circles of radius $O(1)$ determined by each other (by rotations). In particular, $| \bar S^{n, \xi} | \leq 1$, so we have that
\begin{align*}
	\Theta_\iota' &\les \frac{N_3^{2s} N_{\ast 3}^2}{N_1^{2\alpha} N_2^{2s}} 
	\sup_{\lambda, \tilde \eta} 
	\sum_{n_2} \int_{\xi} 
	\int_{\bar S^{n, \xi}_\iota}  
	h (\lambda  - \Delta)  |b_{\tilde \eta } (\zeta, m_2)|^2  d \bar \sigma_{n, \xi} \\
	&\les
	\frac{N_3^{2s} N_{\ast 3}^2}{N_1^{2\alpha} N_2^{2s}} 
	\sup_{\lambda, \tilde \eta} \sum_{m_2} \int_\zeta |b_{\tilde \eta } (\zeta, m_2)|^2 
	\int_{\bar S^{m, \zeta}_\iota}  
	h (\lambda  - \Delta)   d \bar \sigma_{n, \xi} \\
	&\les  
	\frac{N_{\ast 3}^2}{N^{2\alpha}} \sup_{\lambda, m, \zeta} 
	\int_{\bar S^{m, \zeta}_\iota}  
	h (\lambda  - \Delta)   d \bar \sigma_{n, \xi},
\end{align*}
where in the last line we also used $N_1\approx N_2\approx N_3 \approx N$ given that $M_2, M_3 < 1$.

Now, if $N_{\ast 3} = 1$, we can simply bound the later integral by $1$ using that $| \bar S^{m, \zeta}| \les 1$ since $(\sqrt{3} \xi, n_2)$ lives in a $O(1)$-sized ball determined by $\zeta, m$. Thus, we conclude \eqref{eq:sky2} easily in that case.

Thus, we can assume $N_{\ast 3} \neq 1$ and therefore we can apply Lemma \ref{lemma:integral}. We have that
\begin{equation*}
	\int_{\bar S^{m, \zeta}_\iota}  
	h (\lambda  - \Delta)   d \bar \sigma_{n, \xi} \les \sum_{n_2} 	\int_{\bar S^{m, \zeta, n_2}_\iota}  
	h (\lambda  - \Delta)   d \bar \sigma_{n, \xi} \les 
	\sum_{n_2} \frac{   \mathbbm{1}_{ \bar S^{m, \zeta, n_2}_\iota \neq \emptyset }  }{NM_3} \leq \frac{1}{N_{\ast 3}^{1/2}},
\end{equation*}
where in the last inequality we used $M_3 \gtrsim M_{3, \rm{min}} \approx N_{\ast 3}^{1/2} / N$ and that $n_2$ runs over at most $O(1)$ possible values given that $\zeta, m_2$ are fixed.

Thus, we get that
\begin{equation*}
	\Theta_\iota' \les N^{3/2 - 2\alpha}
\end{equation*}
which satisfies \eqref{eq:sky2} given that $\alpha > \frac34$.

\subsubsection{Case $\mathcal C_{\rm{high}}$}

Here, we study the case where $N_1 \geq \frac18 N_{\mathrm{max}}$. In this case, we can use deterministic bounds. From \eqref{eq:objective_def}, \eqref{eq:objective_mixed}, it suffices to show:
\begin{equation} \label{eq:sky}
	\Theta_\iota' := \frac{N_3^{2s} N_{\ast 3}^2}{N_1^{2\alpha} N_2^{2s}} 
	\sup_{\lambda, \tilde \eta}  
	\sum_{n_2} \int_{\xi} 
	\left|   \int_{\bar S^{n, \xi}_\iota}  
	h (\lambda  - \Delta)  |b_{\tilde \eta } (\zeta, m_2)|  d \bar \sigma_{n, \xi} \right|^2  \les N^{-\eps}
\end{equation}

\textbf{Generic case: $(M_3, N_{\ast 3}) \neq (M_{3, \rm{min}}, 1)$ and $M_2 \geq N^{-12/13}$. } \\

We note that $N_1 \approx N$ due to the hypothesis of $\mathcal C_{\rm{high}}$. Moreover, we note by Lemma \ref{lemma:localization} that $(\zeta, m_2)$ lies on a certain box of sizes $C \max \{ M_3 ,  M_2 \} \times C \max \{ M_3 ,  M_2 \}$ once $(\xi, n_2)$ are fixed. Due to $\mathcal C_{\rm{high}}$, we have $\max \{ M_2,  M_3\} \geq 1$, since cases with $\max \{ M_2,  M_3 \} < 1$ are included in $\mathcal C_{\rm{bad}}$

Since $(M_3, N_{\ast, 3}) \neq (M_{3, \rm{min}}, 1)$, we can apply Lemma \ref{lemma:integral}, and we have:
\begin{align} 
	\left| \int_{\bar S^{n, \xi}_\iota} h(\lambda - \Delta) | b_{\tilde \eta} (\zeta, m_2) | d\bar \sigma_{n, \xi}\right|^2  &\les \int_{\bar S^{n, \xi}_\iota }  | b_{\tilde \eta}(\zeta, m_2)|^2 h(\lambda - \Delta) d\bar \sigma_{n, \xi} \cdot \int_{\bar S^{n, \xi}_\iota } h(\lambda - \Delta) d\bar \sigma_{n, \xi} \nonumber \\
	&\les
	\int_{\bar S^{n, \xi}_\iota }  | b_{\tilde \eta}(\zeta, m_2)|^2 h(\lambda - \Delta) d\bar \sigma_{n, \xi} \sum_{m_2} \frac{ \mathbbm{1}_{\bar S^{n, \xi}_\iota \neq \emptyset } }{NM_3} \nonumber \\
	&\les
	\int_{\bar S^{n, \xi}_\iota }  | b_{\tilde \eta}(\zeta, m_2)|^2 h(\lambda - \Delta) d\bar \sigma_{n, \xi}  \min \left\{ \frac{ \max \{ M_2 , M_3 \}  }{ NM_3 }, \frac{N_2}{NM_3} \right\} \label{eq:pinkfloyd1},
\end{align}
where we use that $m_2$ ranges over at most $O(N_2)$ values (as well as over at most $\max \{ M_2, M_3 \}$ values). Interchanging the integrals and using the same argument (but with Corollary \ref{cor:integral} this time), we also have
\begin{equation}
	\sum_{n_2} \int_\xi \int_{\bar S_\iota^{n, \xi}} h(\lambda - \Delta) | b_{\tilde \eta} (\zeta, m_2) |^2 d\bar \sigma_{n, \xi} \les  \min \left\{ \frac{\max \{ M_2 , M_3 \} }{NM_2}, \frac{  N_3  }{NM_2} \right\} \label{eq:pinkfloyd2}
\end{equation}
where we also used that $\| b_{\tilde \eta} \|_{L^2} = 1$ for all $\tilde \eta$. Corollary \ref{cor:integral} also requires $(M_{2}, N_{\ast 2}) \neq (M_{2, \rm{min}}, 1)$ but this is ensured by $M_2 \geq N^{-23/24}$ (since $M_{2, \rm{min}} = N_{\ast 2}^{1/2}/N$)

Now, from \eqref{eq:pinkfloyd1}--\eqref{eq:pinkfloyd2}, picking the terms $\frac{N_2}{NM_3}$ and $\frac{\max \{ M_2 , M_3 \} }{NM_2}$ in the minima, we deduce
\begin{equation} \label{eq:pinkfloyd3}
	\Theta_\iota' \les \frac{N^{2s} N_{\ast 3}^2 }{N^{2\alpha}} N_2^{1-2s}  \cdot \frac{1}{N^2\min \{ M_2 , M_3\}} = N_2^{1-2s} \frac{ N^{2s-2\alpha} N_{\ast 3} }{N\min \{ M_2 , M_3\}}. 
\end{equation}
In the case $M_3 \leq M_2$, we use that $NM_3 \geq N_{\ast 3}^{1/2}$, and since $\alpha > \frac34$, we obtain the desired inequality.

In the case $M_2 \leq M_3$, we use that $NM_2 \geq N^{1/13}$, and provided that $\alpha > 25/26$, we obtain the desired bound for $s-1/2$, $\eps$ sufficiently small.

\textbf{Singular case: $(M_3, N_{\ast 3}) = (M_{3, \rm{min}}, 1)$} \\

The treatment is similar as in the previous case, but instead of using \eqref{eq:pinkfloyd1}, we use
\begin{equation*}
	\left| \int_{\bar S^{n, \xi}_\iota} h(\lambda - \Delta) | b_{\tilde \eta} (\zeta, m_2) | d\bar \sigma_{n, \xi} \right|^2  \les N_2^2 \int_{\bar S^{n, \xi}_\iota }  | b_{\tilde \eta}(\zeta, m_2)|^2 h(\lambda - \Delta) d\bar \sigma_{n, \xi} 
\end{equation*}
which follows trivially since $|\bar S^{n, \xi}_\iota | \les N_2^2$. Now, we continue using \eqref{eq:pinkfloyd2} with the bound $\frac{ \max \{ M_2 , M_3\} }{ NM_2 }$. Since $M_3 \ll 1$, we must have that $M_2 \geq 1$ (from $C_{\rm{high}}$ hypothesis), so that  $\frac{ \max \{ M_2 , M_3\} }{ NM_2 } = \frac{1}{N}$. Thus,
\begin{equation*}
	\Theta_\iota' \les \frac{N_3^{2s}}{N_1^{2\alpha} N_2^{2s}} \frac{1}{N} \cdot N_2^{2} \les N^{-2\alpha + 2s} N_2^{1-2s}
\end{equation*}
which concludes the case since $\alpha > s$ and $s-1/2$ is sufficiently small.

\textbf{Case $M_2 \leq N^{-12/13}$ and $N_2 \geq 100N^{2/13}$.} \\

It only remains to treat the case of $\mathcal C_{\rm{high}}$ where $M_2 \leq N^{-12/13}$. Here, we can assume that  $N_2 \geq N^{2/13}$, since otherwise the case is included in $C_{\rm{res}}$.

We start noting that $M_2 \geq M_{2, \rm{min}} = N_{\ast 2}^{1/2} /N$ so that the restriction $M_2 \geq N^{-12/13}$ implies that $N_{\ast 2}\leq N^{2/13}$. Since $\langle \zeta \rangle \leq 2N_{\ast 2} \leq 2N^{2/13}$ but $N_2 \geq 100N^{2/13}$, we have that $\langle m_2 \rangle \approx N_2$. That is, the frequency $(\zeta, m_2)$ is approximately vertical.

We also note that since $\theta_1^2 - \theta_3^2 \approx NM_2$:
\begin{equation} \label{eq:rolling}
	O(NM_2) = 3(\xi + \zeta)^2 + (n_2 + m_2)^2 - 3\xi^2 - n_2^2 = O(NN_{\ast 2}) + m_2(m_2 + 2n_2)
\end{equation}
In particular, since $N_{\ast 2} \geq 1 \gg M_2$, the error that domintaes is $O(NN_{\ast 2 })$ and we have that $m_2 (m_2 + 2n_2) = O(NN_{\ast 2 })$. 

Now, we combine that with the fact that $\langle m_2 \rangle \approx N_2$. We also assume $m_2 \neq 0$ here, deferring $m_2 = 0 $ to the end of the subsection. Since $m_2 \neq 0$:
\begin{equation}\label{eq:rolling2}
	m_2 + 2n_2 = O \left( \frac{N N_{\ast 2} }{N_2}\right)
\end{equation} 
In particular, we see that once $m_2$ is fixed, $n_2$ only ranges over an interval of size $\frac{N N_{\ast 2}}{N_2}$. We will use this to improve our bounds. 

We start using \eqref{eq:pinkfloyd1} without any modification. However, we note that we can improve \eqref{eq:pinkfloyd2} using the bound above for the quantity of possible values $n_2$ can take (when $m_2$ is fixed). We have
\begin{equation}
	\sum_{n_2} \int_\xi \int_{\bar S_\iota^{n, \xi}} h(\lambda - \Delta) | b_{\tilde \eta} (\zeta, m_2) |^2 d\bar \sigma_{n, \xi} \les \frac{N N_{\ast 2}}{N_2} \cdot \frac{1}{NM_2} \les \frac{N N_{\ast 2}^{1/2}}{N_2} \label{eq:beatles}
\end{equation} 
where in the last inequality we used that $NM_2\geq N_{\ast 2}^{1/2}$. Now, combining this with \eqref{eq:pinkfloyd1}, we see
\begin{equation*}
	\Theta_\iota' \les \left( \frac{N_3}{N_2} \right)^{2s} \frac{N_{\ast 3}^2 }{N^{2\alpha}} \frac{N N_{\ast 2}^{1/2} }{N_2}  \min \left\{ \frac{1}{ N}, \frac{N_2}{N M_3}\right\}  \les \frac{N^{2s+2-2\alpha}}{N_2^{2s}}  N_{\ast 2}^{1/2}  \min \left\{ \frac{1}{N_2}, \frac{1}{M_3} \right\}
\end{equation*}
Now, we note that $\max \{ N_2 , M_3 \} = N$. The reason is that if $N_2 \ll N_{\rm{max}}$, then $N_1 \geq 8N_2$ and we have $M_3 \gtrsim N$.

Combining that with $N_{2} \gtrsim N^{2/13}$, we get that $$ \Theta_\iota' \les \frac{N^{2s+1-2\alpha}}{N_2^{s}}   \les N^{2s+1-2\alpha-2/13}$$
so we see that the desired bound \eqref{eq:sky} is satisfied for $\alpha > 12/13$ provided that $s-1/2$ and $\eps$ are sufficiently small depending on $\alpha$.

Lastly, it remains to deal with the possibility of $m_2 = 0$. In that case, \eqref{eq:rolling} yields $O(NM_2) = (\xi + 2\zeta ) \xi$. Thus, one of the factors needs to be smaller than $(NM_2)^{1/2} \leq N^{1/26}$. Since $|\zeta|  \leq N^{2/13}$, we obtain $N_{\ast 3} \approx \langle  \xi \rangle \leq N^{2/13}$ independently of which $|\xi|$ or $| \xi + 2\zeta |$ is smaller. Using $N_{\ast 3} \leq N^{2/13}$ in \eqref{eq:pinkfloyd3}, we conclude our bound since $\alpha > \frac12 + \frac{1}{13}$, and $s-1/2, \eps$ are sufficiently small.

\subsubsection{Case $\mathcal C_{\rm{res}}$}

Since we have that $\max \{ M_2, M_3 \} \geq 1$ and $M_2 \leq N^{-12/13}$, we note that $M_3 \geq 1$.

We also notice that $M_2 \geq M_{2,\rm{min}} = N_{\ast 2}^{1/2}/N$, which implies that $N_{\ast 2} \leq N^{2/13}$.

Here, we use the $L^\infty$ part of our $Z^{s, b}$ norm. The reason why this is helpful is twofold. First of all, since $N_2 \leq N^{2/13}$ is small, we do not lose much from considering the $L^\infty$ part of the norm, even though it has the disadvantage of being in a lower regularity (yields worse control in terms of $N_2$). Secondly, we are in a situation where $M_2 \leq N^{-12/13}\ll 1$. This imposes a very small interval where $\zeta$ has to live (fixed $\xi, n, m_2$), in order to satisfy that condition. In particular, it is not difficult to show that $\zeta$ needs to live on an interval of size $\varepsilon = O(N^{-9/13})$. Therefore, we expect $L^\infty$ control to be much better than $L^2$ control, since a function with $L^2$-mass equal to 1 could still be of size $\varepsilon^{-1/2}$ on that interval, if it is exactly concentrated there, while a function with $L^\infty$ norm equal to $1$ is simply bounded by $1$.



Instead of bounding the simplified quantity $\Theta_\iota'$, we will bound $\Theta_\iota$ directly (which we recall from \eqref{eq:Theta}). We start by using $L^\infty$ bound on $z$ and $\hat v_{\rm{ren}}$ and the fact that $|g_k^\omega| \les N_1^{\eps/2}$ for all $k$ almost surely (due to the Borel-Cantelli argument given just before \eqref{eq:marcus2}). Recalling $\hat v_{\rm{ren}} = \psi (\tilde \eta) b_{\tilde \eta} (\zeta, m_2)$, we can bound:
\begin{align} \label{eq:maquiavelo}
	\Theta_\iota &\les  \frac{N_3^{2s} N_{\ast 3}^2}{N_1^{2\alpha - \eps/2} N_2^{2s}} \sum_{n_2} \int_\xi \int_{\tilde \tau} \frac{1}{\langle \tilde \tau \rangle^{1-4\delta}} \left|  \int_{\bar S^{n, \xi}_\iota} \left( \int_{\tilde \eta}  \frac{  h(-\tilde \tau - \tilde \eta - \Delta)  }{ \langle \tilde \eta \rangle^{2b} } \right)^{1/2} \| \hat v_{\rm{ren}} (\zeta, m_2, \eta) \|_{L^2_\eta}  d\bar \sigma_{n, \xi} \right|^2
	\end{align}


Now, recall that estimate \eqref{eq:marcus}, coming from $L^\infty$ part of the $Z^{s, b}$. Using it, we have
\begin{align} 
	\Theta_\iota &\les  N^{\eps/2} \frac{N_3^{2s} N_{\ast 3}^2}{N_1^{2\alpha}} \sum_{n_2} \int_\xi \int_{\tilde \tau} \frac{1}{\langle \tilde \tau \rangle^{1-4\delta}} \left|  \int_{\bar S^{n, \xi}_\iota} \frac{1}{\langle \tilde \tau + \Delta \rangle^b} \cdot \frac{| \zeta |}{\langle \zeta \rangle} \frac{ \| \hat v_{\rm{ren}} (\zeta, m_2, \eta)  \|_{L^2_\eta} \cdot \langle \zeta \rangle }{N_2^s | \zeta |}  d\bar \sigma_{n, \xi} \right|^2 \nonumber \\
	&\les \| v \|_{Z^{s, b}}^2
	N^{\eps/2} \cdot N^{2s+2-2\alpha} \sum_{n_2} \int_\xi   \int_{\bar S^{n, \xi}_\iota} \int_{\tilde \tau} \frac{1}{\langle \tilde \tau \rangle^{1-4\delta}} \frac{1}{\langle \tilde \tau + \Delta \rangle^{2b}} \cdot \frac{| \zeta |^2}{\langle \zeta \rangle^2} d\bar \sigma_{n, \xi} \cdot | \bar S^{n, \xi}| \nonumber \\
		&\les \| v \|_{Z^{s, b}}^2
	N^{\eps/2} \cdot N^{2s+2-2\alpha} \sum_{n_2, j} \int_\xi   \int_{\bar S^{n, \xi}_{\iota, j}}  \frac{1}{\langle  \Delta \rangle^{1-4\delta}} \cdot \frac{| \zeta |^2}{\langle \zeta \rangle^2}  d\bar \sigma_{n, \xi} \cdot |\bar S_{\iota, j}^{n, \xi}| \label{eq:ringo} 
\end{align}
We have used that $N_1 \approx N_3 \approx N$, given that $N_2 \leq N^{2/13}$. We have also introduced the variable $j \in  \{1, 2\}$ and split $\bar S_{\iota}^{\xi, n} = \bar S_{\iota, 1}^{\xi, n} \cup \bar S_{\iota, 2}^{\xi, n}$ regarding the value of $\zeta$. We will specify this division later. 

Note that
\begin{equation*}
	O(NM_2) = 3 (\xi + \zeta)^2 + (n_2 + m_2)^2 - 3 \xi^2 - n_2^2 = (\sqrt{3} \xi, n_2)\cdot (\sqrt{3} \zeta, m_2) + O(N_2^2) 
\end{equation*}
A first consequence of this is that
\begin{equation} \label{eq:artreides1}
	| \bar S^{n, \xi}_{\iota} | \les  M_2 N_2 + \frac{N_2^3}{N}
	\end{equation}
since $(\sqrt{3} \zeta, m_2)$ is inside a box of size $CN_2 \times CN_2$ and moreover in the line of width $\frac{NM_2 + N_2^2}{N}$ determined by the perpendicularity condition with $(\sqrt{3} \xi, n_2)$.

A second consequence is that the computation of $\Delta$ yields
\begin{align}
	\Delta &= \xi^3 + \xi n_2^2 + \zeta^3 + \zeta m_2^2 - (\zeta + \xi)^3 - (\zeta + \xi)(n_2+m_2)^2 \nonumber \\
	&= -3\xi^2 \zeta - 2n_2m_2\xi -\zeta n_2^2 + O(NN_2^2) = -3\xi^2 \zeta + 6 \xi^2 \zeta - \zeta n_2^2 + O(NN_2^2 + N^2M_2) \nonumber \\ 
	&= \zeta (\sqrt{3} \xi - n_2) (\sqrt{3} \xi + n_2)  + O(NN_2^2 + N^2M_2). \label{eq:artreides2}
\end{align}
where we also used that $3\xi\zeta + n_2m_2 = O(N_2^2 + NM_2)$ from the previous computation. For simplicity, we define $E = N_2^2 + NM_2$, so that \eqref{eq:artreides1}--\eqref{eq:artreides2} simply read
\begin{equation} \label{eq:artreides3}
	|\bar S^{n, \xi}_{\iota} | \les \frac{EN_2}{N}, \qquad \mbox{ and } \qquad \Delta = \zeta (\sqrt{3} \xi - n_2)(\sqrt{3} \xi + n_2) +O(NE).
\end{equation}
Now, we specify the cases $j=1$ and $j=2$. Let $C$ the implicit constant on $O(NE)$ in \eqref{eq:artreides3}. \begin{itemize}
\item The  case $j=1$ corresponds to $|\zeta |\cdot |\sqrt{3} \xi - n_2| \cdot |\sqrt{3} \xi + n_2| \geq 2CNE$, so that we have $| \Delta | \approx |\zeta |\cdot |\sqrt{3} \xi - n_2| \cdot |\sqrt{3} \xi + n_2|$. 
\item The case $j=2$ corresponds to have that $|\zeta |\cdot |\sqrt{3} \xi - n_2| \cdot |\sqrt{3} \xi + n_2| < 2CNE$.
\end{itemize}

\textbf{Case $j=1$}\\

Going back to \eqref{eq:ringo} and using \eqref{eq:artreides3} we see that
\begin{align*}
	\Theta_{\iota, 1} &\les \| v \|_{Z^{s, b}}^2
	N^{\eps/2} \cdot N^{2s+2-2\alpha} \sum_{n_2} \int_\xi \int_{\bar S_\iota^{n, \xi} }\frac{| \zeta |^2 / \langle \zeta \rangle^2}{  | \zeta |^{1-4\delta} \cdot |\sqrt{3} \xi + n_2 |^{1-4\delta} \cdot |\sqrt{3} \xi - n_2 |^{1-4\delta} }    |\bar S_{\iota}^{n, \xi}| \\
    &\les \| v \|_{Z^{s, b}}^2
	N^{\eps/2} \cdot N^{2s+2-2\alpha} \sum_{n_2} \int_\xi \frac{\mathbbm{1}_{|\xi|, |n_2| \les N}}{|\sqrt{3} \xi + n_2 |^{1-4\delta} \cdot |\sqrt{3} \xi - n_2 |^{1-4\delta} }    |\bar S_{\iota}^{n, \xi}|^2 \\
	&\les_\delta 
	\| v \|_{Z^{s, b}}^2
	N^{\eps/2} \cdot N^{2s+2-2\alpha} \cdot N^{10\delta} \cdot \frac{N_2^2 E^2}{N^2} \les 	\| v \|_{Z^{s, b}}^2 N^{\eps / 2 + 10\delta} N^{2s - 2\alpha} E^2 N_2^2
	\end{align*}
    Since $E = N_2^2 + NM_2 \les N^{4/13}$, we have $E^2N_2^2 \les N^{12/13}$, and we conclude the desired bound as long as $\alpha > \frac{25}{26}$, for $\delta, \eps, s-1/2$ sufficiently small. \\
	
	\textbf{Case $j=2$} \\
	Let $\Gamma \leq 1$ denote the dyadic value of $|\zeta|$. That is, if $| \zeta | \leq 1$ we denote $\Gamma$ to be the (negative) power of $2$ closest to $| \zeta |$, while for $| \zeta | \geq 1$ we simply let $\Gamma = 1$.
	
	In this case, we have $|\sqrt{3} \xi - n_2 | \cdot | \sqrt{3} \xi + n_2 | \les EN/\Gamma$. Since $(\xi, n_2)$ is of size $N$, one of those two factors needs to be $\gtrsim N$. Thus, let us assume without loss of generality $|\sqrt{3} \xi -n_2| \les E/ \Gamma$. We note that the measure of the set of $(\xi, n_2)$ satisfying that condition is $O(EN/\Gamma)$. Therefore, going back to \eqref{eq:ringo} and using \eqref{eq:artreides3} and $\langle \Delta \rangle \les N^3$, we see that
\begin{align*}
	\Theta_{\iota, 2} &\les\| v \|_{Z^{s, b}}^2
	N^{\eps/2 + 15\delta} \cdot N^{2s+2-2\alpha} \sum_{n_2, \Gamma} \int_\xi  \Gamma^2  \int_{\bar S^{n, \xi}_{\iota, j}}  \frac{1}{\langle  \Delta \rangle^{1+\delta}}  d\bar \sigma_{n, \xi} \cdot \frac{EN_2}{N} 
    \end{align*}
Noting that $M_3 \approx N$ (due to $N_2 \leq N^{2/13}$), Lemma \ref{lemma:integral} yields
\begin{equation*}
\sum_{n_2, \Gamma} \int_\xi  \Gamma^2  \sum_{m_2} \int_{\bar S^{n, \xi, m_2}_{\iota, j}}  \frac{1}{\langle  \Delta \rangle^{1+\delta}}  d\bar \sigma_{n, \xi} \les \sum_{n_2, \Gamma} \int_\xi  \Gamma^2 \frac{N_2}{N^2} \les \sum_{\Gamma} \frac{EN}{\Gamma} \Gamma^2 \frac{N_2}{N^2} \les \frac{E N_2}{N}.
\end{equation*}
Therefore, we get that
    \begin{align*} 
	\Theta_{\iota, 2} &\les
	\| v \|_{Z^{s, b}}^2
	N^{\eps/2 + 15\delta} \cdot N^{2s-2\alpha}  E^2 N_2^2 
	\end{align*}
	
Since $E = N_2^2 + NM_2 \les N^{4/13}$, we have $E^2N_2^2 \les N^{12/13}$, and we conclude the desired bound as long as $\alpha > \frac{25}{26}$, for $\delta, \eps, s-1/2$ sufficiently small. \\

\subsubsection{ Case $\mathcal{C}_\mathrm{low}$ }

Let us recall that this case corresponds to $N_1 \leq \frac{1}{8}N$. In such case, we have that $\theta_2, \theta_3 \geq 2 \theta_1$ and therefore $M_2, M_3 \gtrsim N$. We proceed analogously as in \eqref{eq:pinkfloyd1}--\eqref{eq:pinkfloyd2}, but in this case we have that $m_2$ ranges over $O(N_1)$ values when $n_2$ is fixed, since $|k_2| = |m_2+n_2| \les N_1$. Reciprocally, $n_2$ ranges over $O(N_1)$ values for $m_2$ fixed. Thus,  \eqref{eq:pinkfloyd1}--\eqref{eq:pinkfloyd2}, in this case become
\begin{equation*}
		\left| \int_{\bar S^{n, \xi}_\iota} h(\lambda - \Delta) | b_{\tilde \eta} (\zeta, m_2) | d\bar \sigma_{n, \xi}\right|^2 \les
		\int_{\bar S^{n, \xi}_\iota }  | b_{\tilde \eta}(\zeta, m_2)|^2 h(\lambda - \Delta) d\bar \sigma_{n, \xi} \cdot \frac{N_1}{N^2} 
\end{equation*}
and
\begin{equation*}
		\sum_{n_2} \int_\xi \int_{\bar S_\iota^{n, \xi}} h(\lambda - \Delta) | b_{\tilde \eta} (\zeta, m_2) |^2 d\bar \sigma_{n, \xi} \les  \frac{N_1}{N^2}.
\end{equation*}
Therefore, we obtain
\begin{equation*}
	\Theta_\iota' \les \frac{N^{2s} N_{\ast 3}^2 }{N^{2s} N_1^{2\alpha}} \cdot \frac{N_1^2}{N^4} \leq \frac{N_1^{-2\alpha+1}}{N},
\end{equation*}
concluding this case since $\alpha > 1/2$.

\subsection{Proof of Proposition \ref{prop:Linfty}}
\label{subsec:Linfty}

In order to prove the $L^\infty$ estimates it will be useful to have some estimates on $A_{\xi, n_2, \tau}$, which was introduced in the proof of Proposition \ref{prop:refined}. The setting here has some minor modifications, so we give a short proof of the bound again, at the risk of some small repetition. We fix some dyadic numbers $N_i, L_i, M_3$, and recall the set $A_{\xi, n_2, \tau}$ is defined as the set of $(\nu, k_2, \mu)$ such that, for those $\xi, n_2, \tau$, the conditions imposed by $N_i$, $L_i$ and $M_3$ are satisfied. Then, we have the following bound
\begin{lemma} \label{lemma:Abounds} We have that
	\begin{equation} \label{eq:firstAbound}
	| A_{n, \xi, \tau} | \leq L_1 L_2 \frac{\min \{ N_1 , N_2\} }{ N_{\rm{max}} M_3 },
	\end{equation}
	which simplifies to 
	\begin{equation} \label{eq:secondAbound}
		| A_{n, \xi, \tau} | \leq L_1 L_2 \frac{  \min \{ N_1 , N_2\} }{N_{\rm{max}}^2 }
	\end{equation}
	in the case $N_1 \geq 4 N_2$ or viceversa. 
	
	We also have the trivial bound
    \begin{equation} \label{eq:trivialAbound}
| A_{n, \xi, \tau} | \leq \min \{ L_1, L_2 \} \min \{ N_1, N_2 \}^2,
    \end{equation}
    and the following alternative bound
	\begin{equation} \label{eq:thirdAbound}
		| A_{n, \xi, \tau} | \leq \min \{ L_1, L_2 \} \frac{M_3}{N_3} N_{\rm{max}}^2
	\end{equation}
	Moreover, in this last bound, we can include in $A_{n, \xi, \tau}$ all the $(\nu, k_2, \mu)$ such that $|\theta_1 - \theta_2| \leq M_3$, instead of just the ones such that $|\theta_1 - \theta_2 | \approx M_3$.
\end{lemma}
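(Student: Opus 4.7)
\textbf{Plan for Lemma \ref{lemma:Abounds}.} Throughout, fix $(\tau, \xi, n_2)$ and view $A_{n,\xi,\tau}$ as a subset of $(\nu, k_2, \mu)$. Two observations apply uniformly. First, the defining constraints $|\mu - \phi_1| \leq 2L_1$ and $|-\tau-\mu - \phi_2| \leq 2L_2$ jointly restrict $\mu$ to an interval of length at most $\min\{4L_1, 4L_2\}$. Second, adding them yields the $\mu$-independent constraint $|-\tau-\phi_1-\phi_2| \leq 2(L_1+L_2) \les L_{\rm{max}}$. Finally, once $n_2$ is fixed, the size constraints force $k_2$ to range over $O(\min\{N_1, N_2\})$ integers, since $(\zeta, m_2) = (-\xi-\nu, -n_2-k_2)$ must lie in the annulus $|(\zeta, m_2)| \approx N_2$.

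For \eqref{eq:firstAbound}, I would use Fubini to extract the factor $\min\{L_1, L_2\}$ from the $\mu$-slice and then, for each admissible $k_2$, bound the $\nu$-measure using the $\mu$-free constraint together with the derivative identity
\begin{equation*}
\partial_\nu(\phi_1 + \phi_2) = \theta_1^2 - \theta_2^2 = (\theta_1 - \theta_2)(\theta_1 + \theta_2).
\end{equation*}
On $A_{n,\xi,\tau}$ we have $|\theta_1 - \theta_2| \approx M_3$ and $\theta_1 + \theta_2 \approx N_1 + N_2 \gtrsim N_{\rm{max}}$, so this derivative has magnitude $\approx N_{\rm{max}} M_3$, bounding the $\nu$-measure by $\les \max\{L_1, L_2\}/(N_{\rm{max}} M_3)$. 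Summing over the $O(\min\{N_1, N_2\})$ admissible $k_2$ yields \eqref{eq:firstAbound}. Bound \eqref{eq:secondAbound} is then immediate: if $\max\{N_1, N_2\} \geq 4\min\{N_1, N_2\}$, the triangle inequality forces $|\theta_1 - \theta_2| \gtrsim N_{\rm{max}}$ automatically, hence $M_3 \approx N_{\rm{max}}$ and the previous bound specializes.

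The trivial bound \eqref{eq:trivialAbound} combines the $\mu$-range $\min\{L_1, L_2\}$ with the crude $(\nu, k_2)$-area $\min\{N_1, N_2\}^2$ from the annulus restrictions. For \eqref{eq:thirdAbound} I again extract $\min\{L_1, L_2\}$ from the $\mu$-slice but bound the $(\nu, k_2)$-area using only the geometric condition $|\theta_1 - \theta_2| \les M_3$. An explicit expansion gives
\begin{equation*}
\theta_1^2 - \theta_2^2 = -6\xi\nu - 2n_2 k_2 - 3\xi^2 - n_2^2,
\end{equation*}
which is affine in $(\nu, k_2)$ with gradient of norm $\approx N_3$. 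Since $|\theta_1 - \theta_2| \les M_3$ is equivalent to $|\theta_1^2 - \theta_2^2| \les M_3 N_{\rm{max}}$ once $\theta_1 + \theta_2 \gtrsim N_{\rm{max}}$ is used, this cuts out a strip of width $\les M_3 N_{\rm{max}}/N_3$; intersecting with the annulus of radius $\approx N_1$ contributes length $\les N_{\rm{max}}$, hence area $\les M_3 N_{\rm{max}}^2/N_3$. Because only the upper bound on $|\theta_1 - \theta_2|$ enters, the argument automatically covers the enlarged set $\{|\theta_1-\theta_2| \leq M_3\}$ claimed in the final remark. The one step requiring care is \eqref{eq:firstAbound} in the degenerate regime $M_3 = M_{3, \rm{min}}$, where $|\theta_1-\theta_2|$ may be strictly smaller than $M_3$ and the first-derivative argument must be replaced by a second-derivative version using $\partial_\nu^2(\phi_1+\phi_2) = -6\xi$, along the lines of the stationary-phase analysis in Lemma \ref{lemma:integral}.
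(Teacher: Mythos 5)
Your proposal is correct and mirrors the paper's argument in every structural respect: the paper bounds $|A_{n,\xi,\tau}| \le \min\{L_1,L_2\}\,|\tilde A_{n,\xi,\tau,\mu}|$ by first slicing in $\mu$, counts $O(\min\{N_1,N_2\})$ admissible $k_2$ from the annulus constraints, and then controls the $\nu$-measure via the $\mu$-free constraint $|\tau + \phi_1 + \phi_2| \les L_1 + L_2$ together with $|\partial_\nu(\phi_1+\phi_2)| = |\theta_1^2 - \theta_2^2| \approx N_{\rm{max}} M_3$; bounds \eqref{eq:secondAbound}, \eqref{eq:trivialAbound} and \eqref{eq:thirdAbound} are derived exactly as you describe, with \eqref{eq:thirdAbound} coming from the affine-in-$(\nu,k_2)$ strip of width $\approx M_3 N_{\rm{max}}/N_3$ intersected with a box of side $\les N_{\rm{max}}$, and the final remark holding because only the upper bound on $|\theta_1-\theta_2|$ is used there.

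The one place where your write-up is actually more careful than the paper is the degenerate regime $M_3 = M_{3,\rm{min}}$. The paper states ``$|\partial_\nu\tilde\Delta| \gtrsim N M_3$'' by citing Proposition~\ref{prop:refined}, but when $M_3 = M_{3,\rm{min}}$ we only know $|\theta_1-\theta_2| \le M_{3,\rm{min}}$ rather than $\approx M_3$, so the first-derivative lower bound fails. You correctly identify this, and your computation $\partial_\nu^2(\phi_1+\phi_2) = 6\nu + 6\zeta = -6\xi$ is right (in contrast to the naive guess $6(2\nu+\xi)$, since $\partial_\nu\zeta = -1$). The stationary-phase estimate then gives a $\nu$-measure of order $(\max\{L_1,L_2\}/N_{\ast 3})^{1/2}$, and since $M_{3,\rm{min}} \approx N_{\ast 3}^{1/2}/N_{\rm{max}}$ this still recovers \eqref{eq:firstAbound} (using $\max\{L_1,L_2\} \ge 1$), so the lemma as stated is safe. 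In short: same route as the paper, with a legitimate repair of a step the paper leaves implicit.
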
 
 \begin{proof} Since the proof of \eqref{eq:firstAbound} is very similar to Proposition \ref{prop:refined}, we just perform the main steps (see that Proposition for a detailed treatment). First, $|A_{n, \xi, \tau} | \leq \min \{ L_1, L_2\} | \tilde A_{n, \xi, \tau, \mu} |$, where now we have also fixed $\mu$ and $\tilde A$ is just the set of the possible $(\nu, k_2)$. Then, we observe $k_2$ ranges over $\min \{ N_1, N_2 \}$ possible values due to the restrictions $|k_2| \leq N_1$ and $|-n_2-k_2| = |m_2| \leq N_2$. Then, one looks at the size of the interval of possible $\nu$, when $n_2, \xi, k_2$ are fixed, taking into account that $\tilde \Delta := \Delta - \phi (\xi, n_2) = -\tilde \eta - \tilde \mu - \tau$ ranges over an interval of size $4\max \{ L_1, L_2 \}$ (because $\tau$ is fixed and $\tilde \mu, \tilde \eta$ are smaller than $L_1, L_2$ respectively). A computation (done in Proposition \ref{prop:refined}) shows that $|\p_\nu \tilde \Delta| \gtrsim NM_3$, so therefore the measure of the set of possible $\nu$ is at most $\frac{ \max \{ L_1, L_2 \} }{NM_3}$. 
 
 The bound \eqref{eq:secondAbound} follows from \eqref{eq:firstAbound} noting that if $N_1 \geq 4N_2$ or viceversa, then $M_3 \approx N_{\rm{max}}$.

 The bound \eqref{eq:trivialAbound} follows from $| \tilde A_{n, \xi, \tau, \mu } | \leq \min \{ N_1, N_2 \}^2$, which itself follows $\nu$ and $k_2$ ranging over at most $O(\min \{  N_1, N_2 \} )$ values. To see that, notice $| \nu | \leq N_1$ and $| \nu + \xi | \leq N_2$, and similarly for $k_2$.
 	
Lastly, we show \eqref{eq:thirdAbound}. Here, we note
 	\begin{equation*}
 	O(N_{\rm{max}} M_3) = 	(3\nu^2 + k_2^2) - (3(-\xi - \nu)^2 + (-n_2-k_2)^2 ) = (6 \xi, 2n_2) \cdot (\nu, k_2) - (3\xi^2 + n_2^2).
 	\end{equation*}
 	Thus, dividing by $2\theta_3 = 2\sqrt{3\xi^2 + n_2^2} \approx N_3$, we have that
 	\begin{equation*}
 		\vec{d} \cdot (\zeta, m_2) = \frac{\theta_3}{2} + O \left( \frac{N_{\rm{max}} M_3 }{N_3} \right)
 	\end{equation*}
 	where $\vec{d}$ is a unitary vector in the direction of $(3\xi, n_2)$. For $\xi, n_2$ fixed, we see that $(\zeta, m_2)$ lies in some line perpendicular to $\vec{d}$ of width $\frac{M_3 N_{\rm{max}} }{N_3}$. Since $(\zeta ,m_2)$ also lies on a big box of size $N_{\rm{max}} \times N_{\rm{max}}$, we can bound $|\tilde A_{\xi, n_2, \tau, \mu}| \les N_{\rm{max}}^2 \frac{M_3}{N_3} $ just because the intersection of a box of size $\ell \times \ell$ with a line of width $w$ has area $O(w \ell)$. We also remark that the proof just used $| \theta_1 - \theta_2 | \les M_3$, so this bound works when bounding all such frequencies, and not just $| \theta_1 - \theta_2 | \approx M_3$.
 \end{proof}

 \subsubsection{Proof of \eqref{eq:Linfty_1}}
Let us recall estimate \eqref{eq:Linfty_1}:
\begin{equation*}
\Lambda_1 = 	\| \langle \tau - \phi (\xi, n_2) \rangle^{-\frac12+2\delta} \widehat{ \langle \p_x \rangle (u v ) } \|_{L^\infty_{\xi, n_2} (L^2_\tau)} \les \| u \|_{Z^{s, \frac12 + \delta}} \| v \|_{Z^{s, \frac12 + \delta}}.
\end{equation*}

We partition the left term with respect to dyadic frequencies $N_i, N_{\ast i}, L_i$ at the cost of at most introducing factors of $\log (N_{\rm{max}})$ (see the discussion after \eqref{eq:maindet11} for a precise description of this procedure). We decompose in two cases: \begin{enumerate}
	\item The case where we either have comparable frequencies ($N_1\leq 4 N_2 \leq 16N_1$) or $L_{\rm{max}} \geq 1000N^3$.
	\item The case where $L_{\rm{max}} \les N^3$, and without loss of generality, $N_1 \geq 4N_2$.
\end{enumerate} 

In the first case, we renormalize $$\hat u_{\rm{ren}} (\nu, k_2, \mu) = \langle (\nu, k_2) \rangle^{-s} \cdot \langle \mu - \phi (\nu, k_2) \rangle^{-\frac12 - \delta} \hat u (\nu, k_2, \mu)$$ and analogously $\hat v_{\rm{ren}}$. In particular, we have $\| \hat u_{\rm{ren}} \|_{L^2} = \|  u \|_{X^{s, b}} \leq \|  u \|_{Z^{s, b}}$. Letting $\Lambda_{1, \iota}^{(1)}$ be the contribution to $\Lambda_1$ from this first case, with dyadic numbers $\iota = (N_i, N_{\ast i}, L_i)_{i=1}^3$ we have:
\begin{equation*}
	\Lambda_{1, \iota}^{(1)} \les \frac{N_{3 \ast }}{L_3^{\frac12-2\delta} N_1^s N_2^s L_1^{\frac12 + \delta} L_2^{\frac12 + \delta} } \sup_{n, \xi}  \left\| \int_\mu  \sum_{k_2} \int_\nu  | \hat u_{\rm{ren}} (\nu, k_2, \mu) \hat v_{\rm{ren}} (-\xi-\nu, -n_2-k_2, -\tau - \mu) |     \right\|_{L^2_\tau}
\end{equation*}
We note that $\tau$ ranges over an interval of size $O(L_3)$ since $| \tau - \phi (\xi, n_2) | \approx L_3$. Therefore, doing Cauchy-Schwarz, we see that
\begin{equation*}
	L_{\rm{max}}^{\delta} \Lambda_{1, \iota}^{(1)} \les \frac{N_{3 \ast }}{N_1^s N_2^s L_1^{1/2} L_2^{1/2} L_3^{1/2 - 3\delta}} L_3^{1/2}  \| \hat u_{\rm{ren}} \ast \hat v_{\rm{ren}} \|_{L^\infty_{n, \xi, \tau}}
    \les 
    \frac{N_{3 \ast }}{N_1^s N_2^s L_1^{1/2} L_2^{1/2} } L_3^{3\delta}  \| \hat u_{\rm{ren}} \|_{L^2} \| \hat v_{\rm{ren}} \|_{L^2}
\end{equation*}
 Moreover, we notice that $|\tilde \tau + \tilde \eta + \tilde \mu |=| \Delta |  \les N^3$, so if $L_1 \gg N^3$, then $\max \{ L_2, L_3\} \gtrsim L_1$. Therefore, we always have that $$ \frac{L_3^{3\delta}}{L_2^{1/2} L_1^{1/2}} \les \frac{N^{9\delta}}{L_{\rm{med}}^{1/2 - 3\delta} }. $$ 
In the subcase $N_1 \approx N_2$ we can bound
\begin{equation*}
	L_{\rm{max}}^{\delta} \Lambda_{1, \iota}^{(1)} \les \frac{N^{1+9\delta}}{N^{2s} }  \les N^{-\delta} \| u \|_{Z^{s, b}} \| v \|_{Z^{s, b}}
\end{equation*}
taking $\delta$ sufficiently small depending on $s-1/2 > 0$. With respect to the case $L_{\rm{max}} \geq 1000N^3$, we note that $| \tilde \mu + \tilde \nu + \tilde \tau | = | \Delta | \leq 100N^2 $implies $L_{\rm{med}} \approx L_{\rm{max}}$. Therefore, we have \begin{equation*}
	L_{\rm{max}}^{\delta} \Lambda_{1, \iota}^{(1)} \les \frac{N_{\ast 3}}{N_1^s N_2^s} \frac{N^{9\delta}}{L_{\rm{max}}^{1/2-3\delta}} \| u \|_{Z^{s, b}} \| v \|_{Z^{s, b}} 
    \les \frac{N_{\ast 3}}{N^s} \frac{N^{18\delta}}{N^{3/2}} \| u \|_{Z^{s, b}} \| v \|_{Z^{s, b}} \les N^{-\delta} \| u \|_{Z^{s, b}} \| v \|_{Z^{s, b}}
\end{equation*}
for $\delta$ sufficiently small.

Now, we deal with the second case where $N_1 \geq 4N_2$ and $L_{\rm{max}} \les N^3$. We renormalize $u$ in the same way as in the previous case, but now we renormalize $v$ according to the $L^\infty$ part of the $Z^{s, b}$ norm, that is $v_{\rm{ren}} (\zeta, m_2, \eta) = \langle \eta - \phi (\zeta, m_2) \rangle^{-\frac12 - \delta} \hat v (\zeta, m_2, \eta)$. In particular, we have that $\| \hat v_{\rm{ren}} \|_{L^\infty_{\zeta, m_2} (L^2_\eta)} \les \| v \|_{Z^{s, b}}$. 

We recall $A_{\xi, n_2,  \tau}$ is the set of $(\zeta, m_2, \eta)$ described by $\xi, n_2, \tau$ and our dyadic numbers $N_i$, $M_i$, $L_i$, $N_{\ast i}$. Letting $\Lambda_{1, \iota}^{(2)}$ correspond to the contribution of our second case to $\Lambda_1$, we have:
\begin{align*}
L_{\rm{max}}^{\delta} \Lambda_{1, \iota}^{(2)} &\les \frac{N_{3 \ast}}{L_3^{1/2-3\delta} N_1^s L_1^{1/2} L_2^{1/2} } \sup_{n, \xi} \left( \int_{\tau} \left|  \sum_{k_2} \int_\nu \int_\mu | \hat u_{\rm{ren}} (\nu, k_2, \mu) \hat v_{\rm{ren}} (-\xi-\nu, -n_2-k_2, -\tau - \mu)  | \right|^2  \right)^{1/2}
\\
&\les
  \sup_{n, \xi, \tau'}  \frac{N_{3 \ast}}{L_3^{1/2-3\delta}} \frac{ |A_{\xi, n_2, \tau'} |^{1/2} }{N_1^s L_1^{1/2} L_2^{1/2}} \left( \sum_{k_2} \int_\nu \int_{\tau} \int_{\mu } | \hat u_{\rm{ren}} (\nu, k_2, \mu) |^2 | \hat v_{\rm{ren}} (-\xi-\nu, -n_2-k_2, -\tau - \mu) |^2 \right)^{1/2} \\
  &\les 
  \sup_{n, \xi, \tau'}  N_{3 \ast}^{1-s} L_3^{3\delta} \frac{ |A_{\xi, n_2, \tau'} |^{1/2} }{L_1^{1/2} L_2^{1/2} L_3^{1/2}} \| \hat v_{\rm{ren}} \|_{L^\infty_{\zeta, m_2} (L^2_\eta)} \| \hat u_{\rm{ren}} \|_{L^2}
\end{align*} 
Now, we bound $|A_{\xi, n_2, \tau'}|$ using \eqref{eq:secondAbound}. Combining that with $N_1 \gtrsim N_{\rm{max}}$ from $N_1 \geq 4N_2$, we get that
\begin{align*}
	L_{\rm{max}}^{\delta} \Lambda_{1, \iota}^{(2)} &\les  \sup_{\xi, n_2} \frac{N_{3\ast}^{1-s}}{L_1^{1/2} L_2^{1/2} L_3^{1/2}} L_{\rm{max}}^{3\delta} \frac{L_1^{1/2} L_{2}^{1/2}  }{ N^{1/2} } \|  u \|_{Z^{s, b}} \| v \|_{Z^{s, b}} \\
	&\les  N^{9\delta} \cdot N^{1/2-s} \|  u \|_{Z^{s, b}} \|  v \|_{Z^{s, b}} \les N^{-\delta} \|  u \|_{Z^{s, b}} \|  v \|_{Z^{s, b}}
\end{align*}
where we used that $L_{\rm{max}} \les N^3$ and that $\delta$ is sufficiently small with respect to $s-1/2 > 0$. 

Thus, we have shown that for all dyadic partitions $\iota$, we have $\Lambda_{1, \iota} \les N^{-\delta} L_{\rm{max}}^{-\delta} \| u \|_{Z^{s, b}} \| v \|_{Z^{s, b}}$. Summing over all the partitions, this concludes the proof of \eqref{eq:Linfty_1}, since $\sum_{N_i, L_i} L_{\rm{max}}^{-\delta} N^{-\delta} \les 1$.

\subsubsection{Proof of \eqref{eq:Linfty_2}}

Let us recall the estimate \eqref{eq:Linfty_2}:
\begin{equation*}
	\Lambda_{2} := \left\| \langle \tau - \phi (\xi, n_2) \rangle^{-\frac12 +2\delta} \mathcal F_{x, t} \left[ \langle  \p_{x_1} \rangle (v  \chi (t) S(t)u_{0}^\omega )  \right] \right\|_{L^{\infty}} \les \| v \|_{Z^{s, b}}\qquad \forall v \in Z^{s, b}.
\end{equation*}
Let us start noticing that $\mathcal F_{x, t} [\chi (t) S(t) u_0^\omega ] (\nu, k_2, \mu) =\hat u_0^\omega (\nu, k_2) \hat \chi (\mu - \phi (\nu, k_2)).$

We follow a similar approach to the one in \eqref{eq:Linfty_1}. In particular, we define $$\hat v_{\rm{ren}} (\zeta, m_2, \eta) = \langle m \rangle^{-s} \langle \eta - \phi (\zeta, m_2) \rangle^{-\frac12 - \delta} \hat v(\zeta, m_2, \eta)$$
and
$$\hat u_{\rm{ren}} (\nu, k_2, \mu) = \langle k \rangle^{-\alpha} \hat \chi (\mu - \phi (\nu, k_2)) \hat u_0^\omega (\nu, k_2).$$
We notice $$\| \hat v_{\rm{ren}} \|_{L^2} \les \| v \|_{X^{s, b}}, \qquad \mbox{ and }\qquad \| \langle \mu - \phi (\nu, k_2) \rangle^a \hat u_{\rm{ren}} (\nu, k_2, \mu) \|_{L^\infty_{\nu, k_2} (L^2_\mu)} \approx \| \langle k \rangle^{-\alpha} \hat u_0^\omega \|_{L^\infty} \les 1,$$ for any exponent $a$, given that $\hat \chi$ decays exponentially fast. The fact that we can choose any $a > 0$ corresponds to the fact that linear solutions with initial data on $H^s$ remain unaffected by the $b$ exponent on $X^{s, b}$ spaces. In other words, the weight $\langle \mu - \phi (\nu, k_2) \rangle$ is inocuous, since they are concentrated precisely on the set $\mu - \phi (\nu, k_2) = 0$.


We start the bounds very exactly in the same way as \eqref{eq:Linfty_1}, case 2: we do a dyadic decomposition in frequencies and a $L^\infty-L^2$ H\"older type approach. We have
\begin{align} \label{eq:cortisol}
	L_{\rm{max}}^{\delta} \Lambda_{2, \iota} &\les \sup_{n, \xi, \tau'} N_{3\ast} \frac{L_3^{3\delta}}{L_3^{1/2}} \frac{|A_{\xi, n_2, \tau'}|^{1/2}}{N_1^\alpha N_2^s L_1^{1/2} L_2^{1/2}} \| \langle \mu - \phi (\nu, k_2) \rangle^{1/2 + \delta} \hat u_{\rm{ren}} \|_{L^\infty_{\nu, k_2} (L^2_\mu)} \| \hat v_{\rm{ren}} \|_{L_2} 
\end{align}
where the only difference with respect to \eqref{eq:Linfty_1} case 2 is the different exponents in $N_1$ and $N_2$ due to the different renormalizations of $u, v$. 

We first consider the case $L_{\rm{max}} \geq N_{\rm{max}}^2 $, where we use \eqref{eq:trivialAbound} $|A_{\xi, n_2, \tau'}| \leq L_{\rm{med}} \min \{ N_1, N_2 \}^2$. Defining $\Lambda_{2, \iota}^{(1)}$ to be this contribution to $\Lambda_2$, we conclude that $$ L_{\rm{max}}^\delta \Lambda_{2, \iota}^{(1)} \les \frac{N_{3\ast}}{L_{\rm{max}}^{1/2-3\delta}} \frac{\min \{ N_1, N_2\} }{N_1^\alpha N_2^s} \frac{L_{\rm{med}}^{1/2}}{L_{\rm{min}}^{1/2}L_{\rm{med}}^{1/2}} \| \hat u_{\rm{ren}} \|_{L^\infty} \| \hat v \|_{L_2} \les \| \hat u_{\rm{ren}} \|_{L^\infty} \| \hat v \|_{L_2}, $$ since $\alpha, s>1/2$ and $\delta$ is sufficiently small depending on $\alpha, s$.

The main case is $L_{\rm{max}} \leq N^2$, which we divide again in two cases. If $N_1 \geq 4N_2$ or viceversa, we can use \eqref{eq:secondAbound} to estimate $|A_{\xi, n_2, \tau'}|$, and together with $L_{\rm{max}} \leq N_{\rm{max}}^2$, \eqref{eq:cortisol} yields:
\begin{equation*}
L_{\rm{max}}^{\delta} \Lambda_{2, \iota}^{(2)} \les  \sup_{n, \xi, \tau'} N_{3\ast} N_{\rm{max}}^{6\delta} \frac{\min \{ N_1, N_2\}^{1/2} }{ N_{\rm{max}} N_1^\alpha N_2^s} \| v\|_{Z^{s, b}}
\les N_{3\ast} N_{\rm{max}}^{6\delta} \frac{1 }{ N_{\rm{max}}^{3/2}} \| v\|_{Z^{s, b}} \les N^{-\delta} \| v \|_{Z^{s, b}}
\end{equation*}  
using again $\alpha, s > 1/2$.

Lastly, we have the case $L_{\rm{max}}\leq N^2$ with $N_1 \leq 4N_2 \leq 16N_1$. In this case, we will combine \eqref{eq:firstAbound} with \eqref{eq:thirdAbound}. In particular, we need to split the estimate further with respect to the dyadic number $M_3$, and we use \eqref{eq:firstAbound} if $M_3\geq N_{\rm{max}}^{-1/2}$ and \eqref{eq:thirdAbound} for $M_3 = N_{\rm{max}}^{-1/2}$ and lower. Using $N_1 \approx N_2 \approx N_{\rm{max}}$ we can see that both bounds for $M_3\leq N_{\rm{max}}^{-1/2}$ and $M_3 \geq N_{\rm{max}}^{-1/2}$ imply:
\begin{equation*}
	|A_{\xi, n_2, \tau'} | \leq L_{\rm{max}} L_{\rm{med}} \frac{N_{\rm{max}}^{3/2}}{N_{3 \ast}}.
\end{equation*}
 Thus, denoting by $\Lambda_{2, \iota}^{(3)}$ this last contribution to $\Lambda_{2}$, we have
\begin{align*}
	L_{\rm{max}}^{\delta} \Lambda_{2, \iota}^{(3)} \les   \sup_{n, \xi, \tau'} N_{3\ast} N_{\rm{max}}^{6\delta} \frac{1 }{N_1^\alpha N_2^s} \frac{N_{\rm{max}}^{3/4}}{N_{3 \ast}^{1/2}}  \|  v \|_{Z^{s, b}} \leq 
	N^{5/4 + 6\delta - \alpha - s}_{\rm{max}}   \|  v \|_{Z^{s, b}},
\end{align*}
so we conclude $\Lambda_{2, \iota} \les N^{-\delta} L_{\rm{max}}^{-\delta} \| v \|_{Z^{s, b}}$ for $\alpha > 3/4$, using that $s > 1/2$ and $\delta$ is sufficiently small. Thus and \eqref{eq:Linfty_2} follows by summing the $\Lambda_{2, \iota}$ and noting that $\sum_\iota N^{-\delta} L_{\rm{max}}^{-\delta} \les 1$.

\subsubsection{Proof of \eqref{eq:Linfty_3}}
Let us recall estimate \eqref{eq:Linfty_3}:
\begin{align*} \begin{split} 
\Lambda_3 := \left\| \langle \tau - \phi (\xi, n_2) \rangle^{-\frac12+2\delta} \mathcal F_{x, t} \left[ \langle \p_{x_1} \rangle ( ( \chi(t) S(t)u_{0}^\omega)^2 )  \right] \right\|_{L^\infty} \les 1, \\
		\left\| \langle \tau - \phi (\xi, n_2) \rangle^{-\frac12+2\delta} \mathcal F_{x, t} \left[ \langle \p_{x_1} \rangle ( \chi (t)S(t)u_{0}^\omega \widebar{ \chi(t) \chi (t) S(t)u_{0}^\omega } )  \right] \right\|_{L^\infty} \les 1.
\end{split} \end{align*}
We show the first estimate, since the second follows in an identical way. To that end, we define the renormalized $u$ as in the proof of \eqref{eq:Linfty_2} by $\hat u_{\rm{ren}}(\nu, k_2, \mu) = \langle k \rangle^{-\alpha} \hat \chi (\mu - \phi (\nu, k_2) ) \hat u_0 (\nu, k_2) $. We recall that this guarantees $\| \langle \mu - \phi (\nu, k_2) \rangle^a \hat u_{\rm{ren}} \|_{L^\infty} \les 1$ for all $a$. In particular, we take $a = 3/2 + \delta$ here.

We proceed analogously to the proofs of \eqref{eq:Linfty_1} and \eqref{eq:Linfty_2}, doing a dyadic partition in frequency and using H\"older, but this time with a $L^\infty - L^\infty$ type of bound on the spatial frequencies. We have
\begin{equation} \label{eq:rutte}
L_{\rm{max}}^{\delta} \Lambda_{3, \iota} \les  \sup_{\xi, n_2, \tau'} \frac{N_{3\ast}}{N_1^\alpha N_2^
\alpha} \frac{ L_3^{3\delta} }{L_1^{1/2} L_2^{1/2} L_3^{1/2} } \cdot \frac{|A_{\xi, n_2, \tau}|}{L_{\rm{med}}} \| \langle \mu - \phi (\nu, k_2) \rangle^{3/2 + \delta} \hat u_{\rm{ren}} (\nu, k_2, \mu) \|_{L^\infty_{\nu, k_2} (L^2_\mu) }^2
\end{equation}
where we used that $L_1L_2 \geq L_{\rm{med}}$ in the fraction $|A_{\xi, n_2, \tau} |/ (L_1L_2)$.

First, note from \eqref{eq:rutte} that the trivial bound $|A_{\xi, n_2, \tau} | \les N_{\rm{min}}^2 L_{\rm{med}}$ already works if $L_{\rm{max}} \geq N^5$ just by assuming $\alpha > 1/2$, using that $L_{\rm{med}}$ is bounded by $L_1L_2$. Therefore, we can assume $L_{\rm{max}} \leq N^5$ in the following discussion.

We start with the case with the case of $N_1 \geq 4N_2$ (analogously, $N_2 \geq 4N_1$). Here we bound $|A_{\xi, n_2, \tau}|$ as $|A_{\xi, n_2, \tau}|^{1/2} |A_{\xi, n_2, \tau}|^{1/2}$. The first square root is bounded with \eqref{eq:trivialAbound} and the second one via \eqref{eq:secondAbound}. Denoting $\Lambda_{3, \iota}^{(1)}$ to this contribution, we have: 
\begin{equation*}
	L_{\rm{max}}^{\delta} \Lambda_{3, \iota}^{(1)} \les  \sup_{\xi, n_2, \tau'} \frac{N_{3\ast}}{N_{\rm{min}}^\alpha N_{\rm{max}}^
		\alpha} \frac{ N_{\rm{max}}^{15\delta} }{L_1^{1/2} L_2^{1/2} L_3^{1/2}} \frac{L_{\rm{med}} N_{\rm{min}}^{3/2} L_{\rm{max}}^{1/2}  }{ L_1 L_2 N_{\rm{max}} } \les  N_{\rm{max}}^{-2\alpha + 9\delta} N_{\rm{min}}^{3/2} \les 1
\end{equation*}
assuming $\alpha > 3/4$.

Now, we treat the contribution $\Lambda_{3, \iota}^{(2)}$ coming from the case $N_1 \approx N_2 \approx N_{\rm{max}}$. In such case we bound $|A_{n, \xi, \tau'}|$ by the geometric average of \eqref{eq:firstAbound} and \eqref{eq:thirdAbound}. Performing that average, we obtain
\begin{equation*}
	| A_{\xi, n_2, \tau} | \les L_{\rm{med}}L_{\rm{max}}^{1/2} \frac{N_{\rm{max}}}{N_3^{1/2}}
\end{equation*}
Plugging this into \eqref{eq:rutte}, we obtain
\begin{equation*}
	\Lambda_{3, \iota}^{(2)} \les \sup_{\xi, n_2, \tau'} \frac{N_{3\ast}}{N_1^\alpha N_2^
		\alpha} N_{\rm{max}}^{15\delta}  \frac{N_{\rm{max}}}{ N_3^{1/2} } \les N_{\rm{max}}^{3/2 + 9 \delta - 2\alpha} \les 1
\end{equation*}
using again $\alpha > 3/4$.

\printbibliography

\end{document}